\documentclass[a4paper,11pt,reqno]{amsart}

\usepackage[utf8]{inputenc}
\usepackage[T1]{fontenc}
\usepackage{hyperref}
\usepackage{xcolor}
\hypersetup{
    colorlinks,
    linkcolor={red!50!black},
    citecolor={blue!50!black},
    urlcolor={blue!80!black}
}

\usepackage{microtype}
\usepackage{MnSymbol}
\usepackage{textcomp}

\usepackage[mathcal]{euscript}
\let\mathcaltmp\mathcal
\usepackage{mathrsfs}
\let\mathcal\mathscr
\let\mathscr\mathcaltmp

\makeatletter
\def\thm@space@setup{\thm@preskip=7pt
\thm@postskip=7pt}
\makeatother
\newtheoremstyle{plain}
  {}
  {}
  {\slshape}
  {}
  {\bfseries}
  {.}
  { }
  {}

\newtheoremstyle{definition}
  {}
  {}
  {}
  {}
  {\bfseries}
  {.}
  { }
  {}

\makeatletter
\renewenvironment{proof}[1][\proofname]{\par
  \pushQED{\qed}
  \normalfont \topsep0\p@\relax
  \trivlist
  \item[\hskip\labelsep\itshape
  #1\@addpunct{.}]\ignorespaces
}{
  \popQED\endtrivlist\@endpefalse
}
\makeatother

\makeatletter
\newcommand{\eqnum}{\refstepcounter{equation}\textup{\tagform@{\theequation}}}
\makeatother

\makeatletter \@addtoreset{equation}{section} \makeatother
\renewcommand{\theequation}{\thesection.\arabic{equation}}

\newtheorem{thm}[equation]{Theorem}
\newtheorem{thmX}{Theorem}

\newtheorem{lem}[equation]{Lemma}
\newtheorem{cor}[equation]{Corollary}
\newtheorem{prop}[equation]{Proposition}
\newtheorem*{defthm*}{Definition/Theorem}
\newtheorem{conj}[equation]{Conjecture}
\newtheorem{quest}[equation]{Question}
\theoremstyle{definition}
\newtheorem{defn}[equation]{Definition}
\newtheorem{rem}[equation]{Remark}
\newtheorem{exam}[equation]{Example}

\newtheorem*{exam*}{Example}

\newcommand\arXiv[1]{\href{http://arxiv.org/abs/#1}{arXiv:#1}}

\usepackage{xparse}

\usepackage{xspace}

\newcommand{\changelocaltocdepth}[1]{
  \addtocontents{toc}{\protect\setcounter{tocdepth}{#1}}
  \setcounter{tocdepth}{#1}}

\usepackage{xpatch}
\makeatletter   
\xpatchcmd{\@tocline}
  {\hfil\hbox to\@pnumwidth{\@tocpagenum{#7}}\par}
  {\ifnum#1<0\hfill\else\dotfill\fi\hbox to\@pnumwidth{\@tocpagenum{#7}}\par}
  {}{}
\def\l@subsection{\@tocline{2}{0pt}{4pc}{6pc}{}}
\def\l@subsubsection{\@tocline{3}{0pt}{8pc}{8pc}{}}
\makeatother

\newcommand{\nc}{\newcommand}
\nc{\renc}{\renewcommand}
\nc{\ssec}{\subsection}
\nc{\sssec}{\subsubsection}
\nc{\on}{\operatorname}
\nc{\term}[1]{#1\xspace}

\usepackage{tikz}
\usetikzlibrary{matrix}
\usepackage{tikz-cd}

\tikzset{
  commutative diagrams/.cd,
  arrow style=tikz,
  diagrams={>=latex}}
\makeatletter
\tikzset{
  column sep/.code=\def\pgfmatrixcolumnsep{\pgf@matrix@xscale*(#1)},
  row sep/.code   =\def\pgfmatrixrowsep{\pgf@matrix@yscale*(#1)},
  matrix xscale/.code=
    \pgfmathsetmacro\pgf@matrix@xscale{\pgf@matrix@xscale*(#1)},
  matrix yscale/.code=
    \pgfmathsetmacro\pgf@matrix@yscale{\pgf@matrix@yscale*(#1)},
  matrix scale/.style={/tikz/matrix xscale={#1},/tikz/matrix yscale={#1}}}
\def\pgf@matrix@xscale{1}
\def\pgf@matrix@yscale{1}
\makeatother

\usepackage[inline]{enumitem}
\setlist[enumerate,1]{label={(\alph*)},itemsep=\parskip}

\newlist{thmlist}{enumerate}{1}
\setlist[thmlist,1]{
  label={\em(\roman*)}, ref={(\roman*)},
  itemsep=0.5em,
  topsep=0em,
  leftmargin=*,
  align=left,widest=vi)}

\newlist{thmlistbis}{enumerate}{1}
\setlist[thmlistbis,1]{
  label={\em(\roman*~\textit{bis})},
  ref={(\roman*}~\textit{bis}\upshape{)},
  itemsep=0.5em,
  topsep=-0.7em,
  leftmargin=0pt, align=right, widest=vi)}

\newlist{defnlist}{enumerate}{2}
\setlist[defnlist,1]{
  label={(\roman*)}, ref={(\roman*)},
  itemsep=0.5em,
  topsep=0em,
  leftmargin=*,
  align=left, widest=vi)}

\setlist[defnlist,2]{
  label={(\alph*)}, ref={(\alph*)},
  itemsep=0.75em,
  labelsep=0em,labelindent=0em,leftmargin=*,align=left,widest=vi),
  topsep=0.75em}

\newlist{defnlistbis}{enumerate}{1}
\setlist[defnlistbis,1]{
  label={(\roman*~\textit{bis})},
  ref={(\roman*}~\textit{bis}\upshape{)},
  itemsep=0.5em,
  topsep=0em,
  leftmargin=*,
  align=left, widest=vi)}

\newlist{inlinelist}{enumerate*}{1}
\setlist[inlinelist,1]{label={(\alph*)}}

\newlist{inlinedefnlist}{enumerate*}{1}
\definecolor{green}{HTML}{38550C}
\setlist[inlinedefnlist,1]{label={\color{green}(\roman*)}}

\newlist{inlinethmlist}{enumerate*}{1}
\definecolor{green}{HTML}{38550C}
\setlist[inlinethmlist,1]{label={\color{green}(\roman*)}}

\nc{\cA}{\ensuremath{\mathcal{A}}\xspace}
\nc{\cB}{\ensuremath{\mathcal{B}}\xspace}
\nc{\cC}{\ensuremath{\mathcal{C}}\xspace}
\nc{\cD}{\ensuremath{\mathcal{D}}\xspace}
\nc{\cE}{\ensuremath{\mathcal{E}}\xspace}
\nc{\cF}{\ensuremath{\mathcal{F}}\xspace}
\nc{\cG}{\ensuremath{\mathcal{G}}\xspace}
\nc{\cH}{\ensuremath{\mathcal{H}}\xspace}
\nc{\cI}{\ensuremath{\mathcal{I}}\xspace}
\nc{\cJ}{\ensuremath{\mathcal{J}}\xspace}
\nc{\cK}{\ensuremath{\mathcal{K}}\xspace}
\nc{\cL}{\ensuremath{\mathcal{L}}\xspace}
\nc{\cM}{\ensuremath{\mathcal{M}}\xspace}
\nc{\cN}{\ensuremath{\mathcal{N}}\xspace}
\nc{\cO}{\ensuremath{\mathcal{O}}\xspace}
\nc{\cP}{\ensuremath{\mathcal{P}}\xspace}
\nc{\cQ}{\ensuremath{\mathcal{Q}}\xspace}
\nc{\cR}{\ensuremath{\mathcal{R}}\xspace}
\nc{\cS}{\ensuremath{\mathcal{S}}\xspace}
\nc{\cT}{\ensuremath{\mathcal{T}}\xspace}
\nc{\cU}{\ensuremath{\mathcal{U}}\xspace}
\nc{\cV}{\ensuremath{\mathcal{V}}\xspace}
\nc{\cW}{\ensuremath{\mathcal{W}}\xspace}
\nc{\cX}{\ensuremath{\mathcal{X}}\xspace}
\nc{\cY}{\ensuremath{\mathcal{Y}}\xspace}
\nc{\cZ}{\ensuremath{\mathcal{Z}}\xspace}

\nc{\sA}{\ensuremath{\mathscr{A}}\xspace}
\nc{\sB}{\ensuremath{\mathscr{B}}\xspace}
\nc{\sC}{\ensuremath{\mathscr{C}}\xspace}
\nc{\sD}{\ensuremath{\mathscr{D}}\xspace}
\nc{\sE}{\ensuremath{\mathscr{E}}\xspace}
\nc{\sF}{\ensuremath{\mathscr{F}}\xspace}
\nc{\sG}{\ensuremath{\mathscr{G}}\xspace}
\nc{\sH}{\ensuremath{\mathscr{H}}\xspace}
\nc{\sI}{\ensuremath{\mathscr{I}}\xspace}
\nc{\sJ}{\ensuremath{\mathscr{J}}\xspace}
\nc{\sK}{\ensuremath{\mathscr{K}}\xspace}
\nc{\sL}{\ensuremath{\mathscr{L}}\xspace}
\nc{\sM}{\ensuremath{\mathscr{M}}\xspace}
\nc{\sN}{\ensuremath{\mathscr{N}}\xspace}
\nc{\sO}{\ensuremath{\mathscr{O}}\xspace}
\nc{\sP}{\ensuremath{\mathscr{P}}\xspace}
\nc{\sQ}{\ensuremath{\mathscr{Q}}\xspace}
\nc{\sR}{\ensuremath{\mathscr{R}}\xspace}
\nc{\sS}{\ensuremath{\mathscr{S}}\xspace}
\nc{\sT}{\ensuremath{\mathscr{T}}\xspace}
\nc{\sU}{\ensuremath{\mathscr{U}}\xspace}
\nc{\sV}{\ensuremath{\mathscr{V}}\xspace}
\nc{\sW}{\ensuremath{\mathscr{W}}\xspace}
\nc{\sX}{\ensuremath{\mathscr{X}}\xspace}
\nc{\sY}{\ensuremath{\mathscr{Y}}\xspace}
\nc{\sZ}{\ensuremath{\mathscr{Z}}\xspace}

\nc{\bA}{\ensuremath{\mathbf{A}}\xspace}
\nc{\bB}{\ensuremath{\mathbf{B}}\xspace}
\nc{\bC}{\ensuremath{\mathbf{C}}\xspace}
\nc{\bD}{\ensuremath{\mathbf{D}}\xspace}
\nc{\bE}{\ensuremath{\mathbf{E}}\xspace}
\nc{\bF}{\ensuremath{\mathbf{F}}\xspace}
\nc{\bG}{\ensuremath{\mathbf{G}}\xspace}
\nc{\bH}{\ensuremath{\mathbf{H}}\xspace}
\nc{\bI}{\ensuremath{\mathbf{I}}\xspace}
\nc{\bJ}{\ensuremath{\mathbf{J}}\xspace}
\nc{\bK}{\ensuremath{\mathbf{K}}\xspace}
\nc{\bL}{\ensuremath{\mathbf{L}}\xspace}
\nc{\bM}{\ensuremath{\mathbf{M}}\xspace}
\nc{\bN}{\ensuremath{\mathbf{N}}\xspace}
\nc{\bO}{\ensuremath{\mathbf{O}}\xspace}
\nc{\bP}{\ensuremath{\mathbf{P}}\xspace}
\nc{\bQ}{\ensuremath{\mathbf{Q}}\xspace}
\nc{\bR}{\ensuremath{\mathbf{R}}\xspace}
\nc{\bS}{\ensuremath{\mathbf{S}}\xspace}
\nc{\bT}{\ensuremath{\mathbf{T}}\xspace}
\nc{\bU}{\ensuremath{\mathbf{U}}\xspace}
\nc{\bV}{\ensuremath{\mathbf{V}}\xspace}
\nc{\bW}{\ensuremath{\mathbf{W}}\xspace}
\nc{\bX}{\ensuremath{\mathbf{X}}\xspace}
\nc{\bY}{\ensuremath{\mathbf{Y}}\xspace}
\nc{\bZ}{\ensuremath{\mathbf{Z}}\xspace}

\nc{\bbA}{\ensuremath{\mathbb{A}}\xspace}
\nc{\bbB}{\ensuremath{\mathbb{B}}\xspace}
\nc{\bbC}{\ensuremath{\mathbb{C}}\xspace}
\nc{\bbD}{\ensuremath{\mathbb{D}}\xspace}
\nc{\bbE}{\ensuremath{\mathbb{E}}\xspace}
\nc{\bbF}{\ensuremath{\mathbb{F}}\xspace}
\nc{\bbG}{\ensuremath{\mathbb{G}}\xspace}
\nc{\bbH}{\ensuremath{\mathbb{H}}\xspace}
\nc{\bbI}{\ensuremath{\mathbb{I}}\xspace}
\nc{\bbJ}{\ensuremath{\mathbb{J}}\xspace}
\nc{\bbK}{\ensuremath{\mathbb{K}}\xspace}
\nc{\bbL}{\ensuremath{\mathbb{L}}\xspace}
\nc{\bbM}{\ensuremath{\mathbb{M}}\xspace}
\nc{\bbN}{\ensuremath{\mathbb{N}}\xspace}
\nc{\bbO}{\ensuremath{\mathbb{O}}\xspace}
\nc{\bbP}{\ensuremath{\mathbb{P}}\xspace}
\nc{\bbQ}{\ensuremath{\mathbb{Q}}\xspace}
\nc{\bbR}{\ensuremath{\mathbb{R}}\xspace}
\nc{\bbS}{\ensuremath{\mathbb{S}}\xspace}
\nc{\bbT}{\ensuremath{\mathbb{T}}\xspace}
\nc{\bbU}{\ensuremath{\mathbb{U}}\xspace}
\nc{\bbV}{\ensuremath{\mathbb{V}}\xspace}
\nc{\bbW}{\ensuremath{\mathbb{W}}\xspace}
\nc{\bbX}{\ensuremath{\mathbb{X}}\xspace}
\nc{\bbY}{\ensuremath{\mathbb{Y}}\xspace}
\nc{\bbZ}{\ensuremath{\mathbb{Z}}\xspace}

\nc{\mrm}[1]{\ensuremath{\mathrm{#1}}\xspace}
\nc{\mit}[1]{\ensuremath{\mathit{#1}}\xspace}
\nc{\mbf}[1]{\ensuremath{\mathbf{#1}}\xspace}
\nc{\mcal}[1]{\ensuremath{\mathcal{#1}}\xspace}
\nc{\msc}[1]{\ensuremath{\mathscr{#1}}\xspace}
\nc{\mfr}[1]{\ensuremath{\mathfrak{#1}}\xspace}

\nc{\sub}{\subseteq}
\nc{\too}{\longrightarrow}
\nc{\hook}{\hookrightarrow}
\nc{\hooklongrightarrow}{\lhook\joinrel\longrightarrow}
\nc{\hooklong}{\hooklongrightarrow}
\nc{\hooklongleftarrow}{\longleftarrow\joinrel\rhook}
\nc{\twoheadlongrightarrow}{\relbar\joinrel\twoheadrightarrow}
\nc{\longrightleftarrows}{\ \raisebox{0.3ex}{\(\mathrel{\substack{\xrightarrow{\rule{1em}{0em}} \\[-1ex] \xleftarrow{\rule{1em}{0em}}}}\)}\ }

\renc{\ge}{\geqslant}
\renc{\le}{\leqslant}

\nc{\id}{\mathrm{id}}

\DeclareMathOperator{\Hom}{\on{Hom}}
\nc{\uHom}{\underline{\smash{\Hom}}}
\DeclareMathOperator{\Maps}{\on{Maps}}
\DeclareMathOperator{\Aut}{\on{Aut}}
\DeclareMathOperator{\End}{\on{End}}

\nc{\uEnd}{\underline{\smash{\End}}}

\nc{\colim}{\varinjlim}
\renc{\lim}{\varprojlim}
\nc{\Cofib}{\on{Cofib}}
\nc{\Fib}{\on{Fib}}
\nc{\initial}{\varnothing}
\nc{\op}{\mathrm{op}}

\DeclareMathOperator*{\fibprod}{\times}
\DeclareMathOperator*{\fibcoprod}{\operatorname{\sqcup}}

\renc{\setminus}{\smallsetminus}

\usepackage{mathtools}
\DeclarePairedDelimiter\abs{\lvert}{\rvert}

\newcommand{\thmref}[1]{Theorem~\ref{#1}}

\newcommand{\secref}[1]{Sect.~\ref{#1}}

\newcommand{\lemref}[1]{Lemma~\ref{#1}}
\newcommand{\propref}[1]{Proposition~\ref{#1}}
\newcommand{\corref}[1]{Corollary~\ref{#1}}

\newcommand{\remref}[1]{Remark~\ref{#1}}

\renewcommand{\eqref}[1]{(\ref{#1})}

\newcommand{\itemref}[1]{\ref{#1}}

\nc{\A}{\bA}
\nc{\modmod}{/\!\!/}
\renc{\P}{\bP}
\nc{\V}{\bV}
\nc{\Spec}{\on{Spec}}
\nc{\D}{\on{\mbf{D}}}
\nc{\rD}{\on{\mrm{D}}}
\nc{\Dqc}{\on{\mbf{D}}_{\mrm{qc}}}
\nc{\bDelta}{\mathbf{\Delta}}
\nc{\Cech}{\textnormal{\v{C}}}
\nc{\Dperf}{\on{\mbf{D}}_{\mrm{perf}}}
\nc{\Coh}{\on{Coh}}
\nc{\Qcoh}{\on{Qcoh}}
\nc{\Dcoh}{\on{\mbf{D}}_{\mrm{coh}}}
\nc{\cl}{{\mrm{cl}}}
\nc{\Bl}{\on{Bl}}
\nc{\vir}{\mrm{vir}}
\nc{\Zar}{\mrm{Zar}}
\nc{\et}{\mrm{\acute{e}t}}
\nc{\Nis}{\mrm{Nis}}
\renc{\H}{\on{H}}
\nc{\BM}{\mrm{BM}}
\nc{\Z}{\bZ}
\nc{\Q}{\bQ}
\nc{\K}{{\on{K}}}
\nc{\G}{{\on{G}}}
\nc{\KH}{\mrm{KH}}
\nc{\KGL}{\mrm{KGL}}
\nc{\rightrightrightarrows}
  {\mathrel{\substack{\textstyle\rightarrow\\[-0.6ex]
   \textstyle\rightarrow \\[-0.6ex]
   \textstyle\rightarrow}}}
\nc{\rightrightrightrightarrows}
  {\mathrel{\substack{\textstyle\rightarrow\\[-0.6ex]
   \textstyle\rightarrow \\[-0.6ex]
   \textstyle\rightarrow \\[-0.6ex]
   \textstyle\rightarrow}}}
\nc{\Einfty}{{\sE_\infty}}
\renc{\sp}{\mrm{sp}}
\nc{\Td}{\on{Td}}
\nc{\ch}{\on{ch}}
\nc{\RGamma}{R\Gamma}
\nc{\red}{\mrm{red}}
\nc{\der}{{\mrm{der}}}
\nc{\Mod}{{\mrm{Mod}}}
\nc{\Gr}{{\on{Gr}}}
\nc{\Ind}{\on{Ind}}
\nc{\Pro}{\on{Pro}}
\nc{\dash}{{\textnormal{-}}}
\nc{\Cat}{\infty\dash\mrm{Cat}}
\nc{\Pres}{\mrm{Pres}}
\nc{\form}{\widehat}
\nc{\R}{\bR}
\renc{\L}{\bL}
\nc{\otimesL}{\mathchoice{\overset{\bL}{\otimes}}{\otimes^\bL}{\otimes^\bL}{\otimes^\bL}}
\nc{\fibprodR}{\fibprod^\bR}
\nc{\uRHom}{\bR\uHom}
\nc{\GL}{\mrm{GL}}
\nc{\SW}{\on{SW}}
\nc{\Vect}{\on{Vect}}
\nc{\Fun}{\on{Fun}}
\nc{\Nat}{\on{Nat}}
\nc{\un}{\mbf{1}}
\nc{\pr}{\mrm{pr}}
\nc{\pt}{\mrm{pt}}
\nc{\vb}[1]{\langle{#1}\rangle}
\nc{\Pt}{\on{Pt}}
\nc{\lisse}{{\triangleleft}}
\nc{\Lis}{\mrm{Lis}}
\nc{\LisStk}{\mrm{LisStk}}
\nc{\Et}{{\mrm{Et}}}
\nc{\aff}{\mrm{aff}}
\nc{\qproj}{\mrm{qproj}}
\nc{\fp}{\mrm{fp}}
\nc{\ft}{\mrm{ft}}
\nc{\affft}{\mrm{affft}}
\nc{\sm}{\mrm{sm}}
\nc{\lci}{\mrm{lci}}
\nc{\Lisftsm}{\Lis^{\ft:\sm}}
\nc{\Lisaffftsm}{\Lis^{\affft:\sm}}
\nc{\Lisaspftsm}{\Lis^{\mrm{aspft}:\sm}}
\renc{\top}{\mrm{top}}
\nc{\C}{\on{C}}
\nc{\Chom}{\mrm{C}_\bullet}
\nc{\Ccoh}{\mrm{C}^\bullet}
\nc{\Ccohc}{\mrm{C}_{\mrm{c}}^\bullet}
\nc{\CBM}{\mrm{C}^{\BM}_\bullet}
\nc{\mot}{\mrm{mot}}
\nc{\Chommot}{\mrm{C}^{\mot}_\bullet}
\nc{\Top}{\mrm{Top}}
\renc{\top}{\mrm{top}}
\nc{\Spc}{\mrm{Spc}}
\nc{\Stk}{\mrm{Stk}}
\nc{\Art}{\mrm{Art}}
\nc{\Shv}{\on{Shv}}
\nc{\Spt}{\mrm{Spt}}
\nc{\heart}{\heartsuit}
\nc{\an}{\mrm{an}}
\nc{\Anima}{\mrm{Ani}}
\nc{\Aff}{\mrm{Aff}}
\nc{\MotSpc}{{\mrm{MAni}}}
\renc{\L}{\mrm{\bL}}
\nc{\h}{\mrm{h}}
\nc{\Sm}{\mrm{Sm}}
\nc{\Sch}{\mrm{Sch}}
\nc{\Asp}{\mrm{Asp}}
\nc{\Betti}{\mrm{Bet}}
\nc{\cdh}{\mrm{cdh}}
\renc{\Re}{\mrm{Re}}
\nc{\bz}{\mathbf{z}}
\nc{\Tot}{\on{Tot}}
\nc{\MGL}{\mrm{MGL}}
\nc{\inv}[1]{[\tfrac{1}{#1}]}
\nc{\SH}{\on{\mathbf{SH}}}
\nc{\DM}{\on{\mathbf{DM}}}
\nc{\uPic}{\underline{\on{Pic}}}
\nc{\Cmot}{\on{C}_{\mrm{mot}}^\bullet}
\nc{\Hmot}{\on{H}_{\mrm{mot}}}

\nc{\scr}{\term{derived commutative ring}}
\nc{\scrs}{\term{derived commutative rings}}

\nc{\inftyCat}{\term{$\infty$-category}}
\nc{\inftyCats}{\term{$\infty$-categories}}

\nc{\inftyGrpd}{\term{$\infty$-groupoid}}
\nc{\inftyGrpds}{\term{$\infty$-groupoids}}

\nc{\dA}{\term{derived Artin}}

\nc{\BSr}{\hyperref[cond:BSr]{$(\mathrm{BS}_{r})$}\xspace}

\title{Descendability and descent in topological weaves\vspace{-2mm}}
\author[A.\,A. Khan]{Adeel A. Khan}
\date{2026-07-16}

\makeatletter
\def\l@subsection{\@tocline{2}{0pt}{4pc}{6pc}{}}
\makeatother

\begin{document}

\begin{abstract}
  We prove a criterion for a finitely presented surjection of algebraic spaces to be descendable in a topological weave.
  We apply this to show that étale motivic spectra satisfy $h$-descent on noetherian finite-dimensional schemes with residue fields of uniformly bounded \'etale cohomological dimension.
  We also show that rational motivic cohomology satisfies arc-descent in weights $\le 1$, and we construct the ``forgetting supports'' isomorphism $f_! \simeq f_*$ for a proper DM-type morphism of Artin stacks, in rational motivic sheaves.
  \vspace{-5mm}
\end{abstract}

\maketitle

\renewcommand\contentsname{\vspace{-1cm}}
\tableofcontents

\setlength{\parindent}{1em}
\parskip 0.6em
\raggedbottom

\thispagestyle{empty}

\changelocaltocdepth{1}

\section*{Introduction}

A fundamental result in the theory of \'etale cohomology is the cohomological descent theorem of \cite[Exp.~V\textsuperscript{bis}, Prop.~4.3.2]{SGA4}, which says that the presheaf $S \mapsto R\Gamma_{\et}(S; \Lambda)$, where $\Lambda$ is a torsion commutative ring, satisfies descent for the $h$-topology of Voevodsky \cite{VoevodskyH}; this amounts to descent for both \'etale surjections and finitely presented proper surjections.
Moreover, this can be upgraded to $h$-descent for the presheaf of $\infty$-categories $S \mapsto \on{D}_\et(S; \Lambda)$, where the transition functors are $*$-pullbacks.

In this paper we study the question of $h$-descent for abstract topologically flavoured six-functor formalisms, i.e., for the topological weaves of \cite{weaves}.
We provide a general criterion for $h$-descent, which we apply in particular to various flavours of motivic sheaves.
Consider first Morel's model for rational motivic sheaves: over an arbitrary base $S$, $\SH_{\Q,+}(S)$ denotes the plus part of the $\infty$-category of rational motivic spectra over $S$ (see \cite[\S 16]{CisinskiDegliseBook}).

\begin{thmX}\label{thmX:DMQ}
	$\SH_{\Q,+}(-)$ satisfies $h$-descent on all algebraic spaces.
\end{thmX}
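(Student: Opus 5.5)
The plan is to reduce $h$-descent to two separate descent statements: Nisnevich (or étale) descent, and descent along finitely presented proper surjections. The $h$-topology on algebraic spaces is generated by étale covers together with finitely presented proper surjections. Since $\SH_{\Q,+}$ is a topological weave, the general criterion announced in the abstract should apply: a finitely presented surjection $f\colon X\to S$ is descendable, in the sense of Mathew, as soon as $\un_S$ lies in the thick $\otimes$-ideal generated by $f_*\un_X$. Descendability of $f$ then gives descent for $f^*$ along the Čech nerve, at the level of $\infty$-categories of sheaves.

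\textbf{Step 1: étale descent.} Étale descent for $\SH_{\Q,+}$ holds because rational plus-part motivic spectra agree with Beilinson motives $\DM_{\mrm{B}}$. By Cisinski–Déglise these satisfy étale descent; equivalently, the rationalisation kills the obstruction coming from the $\eta$-part. I would first establish this on schemes and then extend it to algebraic spaces. The extension should be formal, by defining the weave on spaces via étale (or Nisnevich) descent from schemes.

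\textbf{Step 2: proper descent.} Let $p\colon X\to S$ be a finitely presented proper surjection. By noetherian approximation and continuity of $\SH_{\Q,+}$, I would reduce to $S$ of finite type over $\Z$, or even over a noetherian base. Then I would use Nisnevich-local devissage together with localisation (the gluing triangle for a closed immersion and its open complement). This reduces descendability of $p$ to the case where $S$ is the spectrum of a field, or a henselian local scheme, via induction on dimension.

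Over a field $k$, or after a generic-point argument, I would use Chow's lemma or Raynaud–Gruson to dominate $p$ by a composite of a finite surjection and a blowup. Over a generic point, after refining, $p$ admits a multisection that is finite flat of degree $d$. For a finite flat surjection $q$, the trace $q_*\un\to\un$ composed with the unit $\un\to q_*\un$ is multiplication by $d$. This is invertible rationally, so $\un$ is a retract of $q_*\un$ and $q$ is descendable with index $1$. This is exactly where rational coefficients and the plus part are used, since the plus part makes the trace well behaved on all finite morphisms. Closed immersions plus their complements then handle the remaining strata by noetherian induction: descendability is local along stratifications thanks to the localisation triangle, with the index bounded by roughly the number of strata.

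\textbf{Main obstacle.} The main obstacle is Step 2 in full generality, namely non-noetherian algebraic spaces. There the stratification argument needs a finite stratification with uniform bounds on the descendability index. I expect to handle it by writing $S$ as a limit of finite type $\Z$-spaces, proving descendability with an index bounded independently of the approximation, and using that $*$-pullback preserves descendability data. Finally, combining Steps 1 and 2 gives $h$-descent, since any $h$-cover is refined by a composite of étale covers and proper surjections.
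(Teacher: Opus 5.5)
Your overall architecture matches the paper's. You reduce $h$-descent to étale descent plus descent along finitely presented proper surjections (\propref{prop:h}). You get the latter from descendability (\propref{prop:proper descendable implies descent}). You prove descendability by spreading out over a dense open and gluing strata with the localization triangle (\lemref{lem:descendability and localization}).

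The gap is in the local input. The step carrying all the weight is your claim that a finite flat surjection $q$ of degree $d$ has a trace $q_*\un\to\un$ whose composite with the unit is $d$.

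\begin{itemize}
\item The weave formalism supplies such a trace for free only when $q$ is étale. Then $q_*\simeq q_!$ and $q^*\simeq q^!$, the trace is the counit, and the composite is the Euler characteristic $d_\varepsilon$. The orientation of $\SH_{\Q,+}$ identifies $d_\varepsilon$ with $d$ (\lemref{lem:fet chi field}, \corref{cor:fet chi oriented}).
\item For non-étale $q$, a trace amounts by adjunction to a map $\un\to q^!\un$. There is no general source for one; there is not even a fundamental class unless $q$ is lci. The sentence ``the plus part makes the trace well behaved on all finite morphisms'' is not an argument.
\item You cannot arrange $q$ to be étale. Take $S=\Spec \mathbf{F}_p[t]$ and $X=\Spec \mathbf{F}_p[t^{1/p}]$. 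No multisection of $X\to S$ is étale over the generic point, since its residue fields there contain $\mathbf{F}_p(t^{1/p})$.
\item Splitting $\un\to q_*\un$ for such $q$ is essentially the topological invariance of $\SH_\Q$ (\thmref{thm:topinv}). That is a substantial theorem, and your proposal never invokes it.
\end{itemize}

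The repair is the paper's route (\thmref{thm:descendability}). By EGA~IV\textsubscript{4}, 17.16.4, there is a finite stratification of $S$ such that on each stratum $p$ acquires a section after base change along a finite radicial surjection followed by a finite étale surjection.
\begin{itemize}
\item The finite étale pieces are descendable of index $\le 1$ by the Euler characteristic argument above.
\item The finite radicial pieces are descendable of index $\le 1$ because topological invariance makes $\un\to g_*\un$ invertible.
\item \corref{cor:descendable local} combines the two, and localization glues the strata.
\end{itemize}

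This works directly for qcqs algebraic spaces. Noetherian approximation enters only in checking invertibility of the Euler characteristic (\corref{cor:fet chi approx}). So your ``main obstacle'' is not one: a finitely presented proper surjection descends to a single finite stage of an approximation, and descendability is stable under pullback, so no uniform index bound is needed.

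Likewise, Step~1 should not rest on Cisinski--Déglise. Their results are for noetherian finite-dimensional schemes, so passing to arbitrary qcqs spaces is not merely formal. In any case, descendability of finite étale surjections plus Nisnevich descent already gives étale descent (\propref{prop:et}, \corref{cor:etQ}).
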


Next consider $\SH_{\et}(S)$, the $\infty$-category of motivic spectra over $S$ satisfying étale hyperdescent, and (for $S$ noetherian of finite dimension) the $\infty$-category $\DM_{\et}(S; \Lambda)$ of étale motivic sheaves as defined in \cite[\S 2]{CDEtale}.

\begin{thmX}\label{thmX:SHet}
	$\SH_{\et}(-)$ satisfies $h$-descent on noetherian finite-dimensional algebraic spaces whose residue fields have uniformly bounded étale cohomological dimension.
	The same holds for $\DM_{\et}(-; \Lambda)$ for any commutative ring $\Lambda$.
\end{thmX}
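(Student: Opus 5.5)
We reduce $h$-descent to two separate descent statements: descent for étale (or Nisnevich) surjections and descent for finitely presented proper surjections. On noetherian algebraic spaces the $h$-topology is generated by these two classes; after Nisnevich/étale localization, proper surjections may further be refined by the standard covers, namely blow-up (cdh) squares and finite surjections. Étale hyperdescent for $\SH_{\et}(-)$ holds by construction. For $\DM_{\et}(-;\Lambda)$ it follows from \cite{CDEtale}, where étale motives satisfy étale descent. The uniform bound on cohomological dimension ensures that on our class of spaces étale hyperdescent agrees with Čech descent; we use this when passing between hypercovers and Čech nerves. So the whole task is proper descent.

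For proper surjections $f\colon X\to S$ we use the paper's descendability criterion: in a topological weave, a finitely presented surjection $f$ is descendable once the unit $\un_S\to f_*f^*\un_S$ generates $\un_S$ in finitely many steps, i.e.\ it is descendable in Mathew's sense. Descendability of the commutative algebra $f_*\un$ in the symmetric monoidal $\infty$-category $\SH_{\et}(S)$ gives $f^*$-descent, i.e.\ $\SH_{\et}(S)\simeq \lim \SH_{\et}(X^{\times_S\bullet})$. I would verify the criterion's hypotheses as follows.

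\begin{enumerate}
\item Check that $\SH_{\et}$ and $\DM_{\et}$ are topological weaves on the given class. This means six operations satisfying localization, proper base change, and conservativity of the family of pullbacks to points. Continuity along limits of spaces holds for étale motives by the rigidity and continuity results of Cisinski--Déglise and Bachmann.
\item Reduce descendability pointwise. Localization gives a stratification by locally closed pieces, and descendability of $f_*\un$ with a bound on the index can be glued along a closed/open decomposition, with the exponent adding. So it suffices to treat $S$ the spectrum of a field $k$, plus a bounded stratification argument (noetherian induction). Finite Krull dimension bounds the number of strata steps.
\item Over a field $k$, a proper surjection admits a section after a finite extension $k'/k$. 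Split into the purely inseparable part and the separable part. The purely inseparable part is an equivalence for étale motives by topological invariance. The separable part is an étale cover, for which $\un\to f_*f^*\un$ is descendable with index controlled by the $\ell$-cohomological dimension of $k$, via Galois descent and Mathew's criterion that descendability is governed by finite cohomological dimension.
\end{enumerate}

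The main obstacle is making the index bound uniform across all points, which is exactly where the hypothesis of uniformly bounded étale cohomological dimension of residue fields enters. Rationally, descendability is automatic, since trace maps split the unit. Thus the key case is $\ell$-torsion coefficients, where $\SH_{\et}$ agrees with $\on{D}_\et(-;\Z/\ell)$ by rigidity, and we must show that finite étale (Galois) covers of fields $k$ with $\mathrm{cd}_\ell(k)\le d$ are descendable of index $\le d+1$ or so. I would first try to deduce this from the fact that the Tate spectral sequence has a horizontal vanishing line at $d$, which gives nilpotence of the fibre of the unit map uniformly in $k$. Combined with step (b), this yields a global bound depending only on $d$ and $\dim S$, hence descendability and therefore $h$-descent.
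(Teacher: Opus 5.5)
\textbf{The gap is step (b).} Descendability of $f$ means that $\varepsilon^N\colon I_f^{\otimes N}\to\un_S$ is null-homotopic for some $N$. That is not a pointwise condition:
\begin{itemize}
\item Conservativity of stalks detects isomorphisms (e.g.\ invertibility of an endomorphism of $\un_S$), not null-homotopies.
\item \lemref{lem:descendability and localization} only glues across finitely many closed/open decompositions, and the index adds at each step. Residue fields are not the strata of any finite stratification once $S$ has infinitely many points, so you cannot reduce to $S=\Spec k$.
\item Spreading a null-homotopy from $\Spec\kappa(s)$ to a neighbourhood of $s$ would require continuity of $\SH_{\et}$ together with compactness of $I_f^{\otimes N}$ in $\SH_{\et}(S)$. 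You invoke neither.
\end{itemize}
The paper avoids this. By \cite[IV\textsubscript{4}, Prop.~17.16.4]{EGA} there is a finite stratification by locally closed $X_\alpha$ over which $f$ acquires a section after a finite radicial surjection followed by a finite \'etale surjection. \corref{cor:descendable local} and \lemref{lem:descendability and localization} then reduce everything (\thmref{thm:descendability}) to descendability of finite \'etale and finite radicial surjections over \emph{arbitrary} bases in $\sS_1$. So your step (c) must be proved over $X$ itself. A Tate spectral sequence bound in terms of $\mathrm{cd}_\ell$ of a single residue field does not give that.

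\textbf{What the finite \'etale case actually needs} is \propref{prop:SHet fet desc}. One reduces to $G$-torsors (\lemref{lem:fet desc torsor}) and splits along $\un\to P_d=\un[1/d]\times\un^\wedge_d$. This map is descendable of index $\le 2$ by the arithmetic fracture square, and \lemref{lem:descendable local abstract} combines it with the two factors. This is exactly the step that glues your ``rational'' and ``$\ell$-torsion'' cases, which your proposal never addresses.
\begin{itemize}
\item On the $\un[1/d]$ factor, the Euler characteristic \eqref{eq:fetcons} is an endomorphism of $\un_X$. Its invertibility can therefore legitimately be checked on stalks (\lemref{lem:SHet stalks}, \lemref{lem:SHet deps}).
\item On the $\un^\wedge_p$ factors, you must first discard the characteristic-$p$ locus. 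Rigidity fails there, but $\SH_{\et}(-)^\wedge_p$ vanishes (Bachmann--Hoyois). One then argues in $\widehat{\Shv}(X_{\et})^\wedge_p$ with $c$ a uniform bound on the $p$-cohomological dimension of all \'etale $U\to X$.
\item This global bound is where the residue-field hypothesis really enters, via finite Krull dimension and \cite[Cor.~2.13]{BachmannRigidity}. It must hold against all derived $p$-complete heart objects, not just $\Z/\ell$-coefficients; this is \lemref{lem:coh dim heart}, proved by a Mittag-Leffler argument.
\item The skeletal filtration of $\abs{\Sigma^\infty_+Y_\bullet}$ then gives index $\le c+1$.
\end{itemize}
Finally, $\DM_{\et}(-;\Lambda)$ needs no separate verification: descendability transfers along the symmetric monoidal functor from $\SH_{\et}$.
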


The cases of $\SH_{\Q,+}$ and $\DM_{\et}(-; \Lambda)$ are folklore.\footnote{%
  For $\SH_{\Q,+}$ one may use Nisnevich descent and continuity to reduce to the case of \emph{noetherian finite-dimensional} schemes, where it can be easily derived from the work done in \cite{CisinskiDegliseBook} using the $\infty$-categorical techniques developed in \cite{LurieHTT,LurieHA}.
  We will give a more direct argument.
  For $\DM_{\et}(-; \Lambda)$, it can similarly be derived from the work of Ayoub and Cisinski--D\'eglise \cite{Ayoub,CDEtale}.
}
On global sections, \thmref{thmX:DMQ} recovers $h$-descent for motivic cohomology with rational coefficients, recorded in the literature on quasi-excellent noetherian finite-dimensional schemes by Cisinski--D\'eglise (see \cite[Thms.~14.3.4, 16.2.13]{CisinskiDegliseBook}).
We will also extend this to hyperdescent for the \emph{arc}-topology of \cite{BhattMathew}, for rational motivic cohomology in weights $\le 1$ (and conditionally in arbitrary weights); see \thmref{thm:BS arc}.
\thmref{thmX:SHet} recovers in particular $h$-descent for {\'etale} motivic cohomology, which was recorded for noetherian finite-dimensional schemes in \cite[Cor.~5.5.7]{CDEtale}.

In fact, for both $\SH_{\Q,+}$ and $\SH_{\et}$ we show the stronger assertion that any finitely presented surjection $f : Y \twoheadrightarrow X$ of qcqs algebraic spaces is \emph{descendable} in the sense of Mathew \cite{Mathew}: that is, the unit object $\un_X \in \D(X)$ can be built out of $f_*(\un_Y)$ as follows: it lies in the smallest full subcategory containing $f_*(\un_Y)$ and closed under cofibres, fibres, direct summands, and $(-) \otimes \sF$ for every $\sF \in \D(X)$.
When $f$ is proper, this in particular implies \v{C}ech descent along $f$ (see \propref{prop:proper descendable implies descent}).

Switching our focus from descent to descendability has several advantages.
First, it implies not only that every $\sF \in \D(X)$ can be expressed as the totalization $\Tot(f_{\bullet,*}f_\bullet^*(\sF))$, where $f_\bullet: Y_\bullet \to X$ is the \v{C}ech nerve, but also that this totalization commutes past any exact functor $F$ (see \remref{rem:weave totalization commutes}):
\[
  F(\sF) \simeq \Tot\bigl( F\bigl( f_{\bullet,*}f_\bullet^*(\sF) \bigr) \bigr).
\]

Second, descendability is preserved by any symmetric monoidal functor of stable \inftyCats.
\thmref{thmX:DMQ} therefore implies that any $\Q$-linear oriented topological weave satisfies $h$-descent on algebraic spaces; similarly \thmref{thmX:SHet} implies that a topological weave satisfies $h$-descent if and only if it satisfies étale descent, over bases with residue fields of uniformly bounded étale cohomological dimension.

The most important advantage of descendability in \emph{proving} Theorems~\ref{thmX:DMQ} and \ref{thmX:SHet} is that it admits the following useful criterion.

\begin{thmX}\label{thmX:bootstrap}
  Let $\D$ be a topological weave.
  Then the following conditions are equivalent:
  \begin{thmlist}
    \item
    Every finite étale surjection and every finite radicial surjection of qcqs algebraic spaces is descendable.

    \item
    Every finitely presented surjection of qcqs algebraic spaces is descendable.
  \end{thmlist}
\end{thmX}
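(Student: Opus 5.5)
The implication (ii)$\Rightarrow$(i) is immediate, since finite étale and finite radicial surjections are finitely presented. For (i)$\Rightarrow$(ii) I would bootstrap through a sequence of reductions, using three formal properties of descendable morphisms in a topological weave. First, for a composite $Z \xrightarrow{g} Y \xrightarrow{f} X$, if $f\circ g$ is descendable then so is $f$. If $f$ and $g$ are both descendable then so is $f\circ g$; here one uses that $f_*$ preserves the relevant thick ideal generated by $g_*\un_Z$ after tensoring via the projection formula, at least for proper $f$. Second, descendability is local: if $\{U_i \to X\}$ is a Nisnevich (or étale) cover and each base change $f_{U_i}$ is descendable, then $f$ is. This should follow from localization/excision in a topological weave, i.e.\ from the recollement triangles $j_!j^* \to \id \to i_*i^*$ applied to a finite stratification. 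Third, descendability can be checked on a closed-open stratification: if $Z \hookrightarrow X$ is closed with open complement $U$, and $f$ is descendable over both $Z$ and $U$, then $f$ is descendable. The point is that $\un_X$ is an extension of $i_*\un_Z$ and $j_!\un_U$, and each lies in the thick tensor ideal generated by $f_*\un_Y$, using base change.

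Next I reduce the general case. A finitely presented surjection $f: Y \to X$ admits, after passing to a stratification of $X$ (by noetherian approximation and generic flatness), a refinement by a map that factors as a Nisnevich covering followed by finite flat, or more precisely finite locally free, surjections. By the first property it suffices to dominate $f$ by a descendable composite. Using the standard structure theory (e.g.\ a quasi-finite flat multisection exists étale locally by EGA IV 17.16.3 on strata where $f$ is flat, then Zariski's main theorem), I would dominate $f$ stratum-wise by finite surjections. Finally, every finite surjection over a field, and hence generically over a reduced base, factors as a radicial surjection followed by a finite étale surjection. So after refining the stratification I land in compositions of finite étale and finite radicial surjections, descendable by (i). Noetherian induction on the stratification, in the finitely presented setting approximated from a noetherian base, closes the argument. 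Nilpotent thickenings are harmless, being themselves finite radicial surjections, or invisible to a topological weave.

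The main obstacle is the passage from "descendable on each stratum" to "descendable globally" together with the stratification bookkeeping. The danger is that exponents of descendability (the number of cofibre steps) accumulate. This is fine for finitely many strata but must be handled uniformly. I would make this argument explicit using Mathew's characterization: $f$ is descendable iff the pro-object of the Adams tower of $f_*\un_Y$ is pro-zero, equivalently some map $\un_X \to \Sigma^{n}(\mathrm{fib})^{\otimes n}$ vanishes. I would then verify closure under gluing along $j_!$ and $i_*$ by tensoring the two nilpotence bounds, giving exponent $n_Z+n_U$.
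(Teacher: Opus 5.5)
Your outline is sound, but it is organized differently from the paper's proof. The paper cites EGA~IV\textsubscript{4}, Prop.~17.16.4 directly. That result says: for any finitely presented surjection $f : Y \to X$ of qcqs schemes there is a finite stratification $(X_\alpha)_\alpha$, with finite radicial surjections $X'_\alpha \to X_\alpha$ and finite \'etale surjections $X''_\alpha \to X'_\alpha$, such that $f$ acquires a section over $X''_\alpha$. The paper then combines:
\begin{itemize}
\item Lemma~\ref{lem:descendability section}: a section gives index $\le 1$;
\item Corollary~\ref{cor:descendable local}: descendability descends along a descendable $p$ satisfying the projection formula and $p^*f_*\un_Y \simeq f'_*\un_{Y'}$;
\item the localization lemma with additive index, which is your third property;
\item a Nisnevich cover by a scheme with a splitting sequence, to pass to algebraic spaces.
\end{itemize}
Your domination principle does the job of the first two ingredients at once. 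A section over $X''_\alpha$ is just a map $X''_\alpha \to Y$ over $X$, so $f_\alpha$ is dominated by the composite $X''_\alpha \to X'_\alpha \to X_\alpha$. That composite is descendable by (i) and closure under composition, and Lemma~\ref{lem:monotone index} concludes. This is a legitimate alternative, and it needs no base-change isomorphism for the covering maps.

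Your geometric input, on the other hand, is a roundabout reproof of 17.16.4. Flattening, Nisnevich-local finite flat multisections and Zariski's main theorem are not needed. A closed point of the generic fibre (with finite residue extension), spread out and combined with your generic radicial/\'etale factorization, already shows that $f$ is dominated, stratum by stratum, by composites of finite radicial and finite \'etale surjections. Your worry about uniformly controlling exponents is moot, since a fixed $f$ involves only finitely many strata.

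Several justifications need repair, though none is fatal.
\begin{itemize}
\item Closure under composition does not come from the projection formula, which only treats objects of the form $f^*N$. Instead, note that $f_*(g_*\un_Z \otimes M)$ is a module over the algebra $f_*g_*\un_Z$, hence a retract of $f_*g_*\un_Z \otimes f_*(g_*\un_Z \otimes M)$; no properness is needed.
\item Localization gives locality only for Nisnevich covers, via splitting sequences. For \'etale covers the claim is false in a general topological weave: it would make $\Spec(\mathbf{C}) \to \Spec(\mathbf{R})$ descendable in $\SH$, contradicting the failure of \'etale descent there. It holds in your setting only thanks to (i), but you only use the Nisnevich case.
\item In the gluing step, $i^*f_*\un_Y \simeq f_{Z,*}\un_{Y_Z}$ fails for non-proper $f$. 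So ``using base change'' must be replaced by applying Lemma~\ref{lem:monotone index} to the exchange morphism of algebras $i^*f_*\un_Y \to f_{Z,*}\un_{Y_Z}$, exactly as in Lemma~\ref{lem:descendability and localization}.
\item You never treat algebraic spaces. You need that a qcqs algebraic space has a Nisnevich cover by a qcqs scheme; after that your locality principle applies.
\item The index criterion is that $I^{\otimes n} \to \un_X$ is null-homotopic, where $I$ is the fibre of $\un_X \to f_*\un_Y$. It is not a condition on a map out of $\un_X$.
\end{itemize}
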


We end the paper with the following application, which shows how the stronger form of \v{C}ech descent provided by descendability can be useful.

\begin{thmX}\label{thmX:fsupp}
  Let $f : \sX \to \sY$ be a proper $1$-Artin morphism with proper diagonal.
  Assume that $f$ is representable by Deligne--Mumford stacks, or that $f$ has tame relative inertia and $\sY$ is equicharacteristic.
  Then the canonical \emph{forgetting supports} map $\alpha_f : f_!(\sF) \to f_*(\sF)$ is invertible for every $\sF \in \SH_{\Q,+}(\sX)$.
\end{thmX}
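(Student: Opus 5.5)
We prove \thmref{thmX:fsupp} by reducing, via descendability, to the case of a finite morphism, where the forgetting supports map is invertible by construction.

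\emph{Reductions.} The question is local on $\sY$, and both $f_!$ and $f_*$ satisfy smooth base change. Base change along a smooth atlas $Y \to \sY$ therefore reduces us to the case where $\sY = Y$ is a qcqs algebraic space (or even an affine scheme) and $\sX$ is a proper $1$-Artin stack over $Y$ with proper diagonal. In this setting we want a finite flat, or at least finite surjective, cover $p : X' \to \sX$ with $X'$ an algebraic space.
\begin{itemize}
\item If $f$ is representable by Deligne--Mumford stacks, $\sX$ is a separated DM stack with finite inertia. By the Keel--Mori theorem it has a coarse moduli space $\pi : \sX \to X$ with $X$ proper over $Y$. By the Laumon--Moret-Bailly/Edidin--Hassett--Kresch--Vistoli results, étale locally on $X$ there is a finite surjection $X' \to \sX$ from an algebraic space.
\item In the tame case with equicharacteristic base, we use instead the local structure theorem of Abramovich--Olsson--Vistoli: étale locally on the coarse space, $\sX$ is a quotient $[V/G]$ by a finite linearly reductive group scheme $G$.
\end{itemize}
In either case, $\alpha_f$ factors as $\alpha_g \circ \alpha_\pi$ for $f = g \circ \pi$, with $g : X \to Y$ proper representable, so $\alpha_g$ is invertible by the six-functor formalism on algebraic spaces. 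It thus suffices to treat $\pi$, and after étale localization on $X$ we may take $X$ to be affine.

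\emph{Descent step.} Let $p : X' \to \sX$ be finite surjective, with $X'$ an algebraic space, and write $q = \pi \circ p$, which is finite. The key input is that $p$ is descendable in $\SH_{\Q,+}$. By \thmref{thmX:DMQ} and the criterion \thmref{thmX:bootstrap}, applied after smooth base change to algebraic spaces and combined with smooth descent of descendability, the unit $\un_\sX$ lies in the thick $\otimes$-ideal generated by $p_*\un_{X'}$. Hence every $\sF$ lies in the thick subcategory generated by objects of the form $p_*(\sG) \simeq p_!(\sG)$. Indeed, $\sF \otimes p_*\un \simeq p_*p^*\sF$ by the projection formula for finite $p$, which is representable and proper, so that $p_! \simeq p_*$ there.

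Since $\alpha_\pi$ is a natural transformation of exact functors, the full subcategory of $\sF$ on which it is invertible is thick. So it suffices to check $\alpha_\pi$ on $p_*\sG$. There
\[
\pi_! p_! \sG \simeq q_! \sG \simeq q_* \sG \simeq \pi_* p_* \sG,
\]
and the compatibility of $\alpha$ with composition, $\alpha_q = \alpha_\pi \circ \pi_!(\alpha_p)$, identifies this composite with $\alpha_\pi$ applied to $p_*\sG$. Since $\alpha_q$ and $\alpha_p$ are invertible, $\alpha_\pi$ is invertible on $p_*\sG$.

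\emph{Main obstacle.} The main obstacle is producing the finite cover $p$ from an algebraic space, and making sense of descendability for $p$ when the target is a stack. Extending \thmref{thmX:bootstrap} beyond algebraic spaces requires knowing that descendability can be checked smooth-locally on the target with uniform bounds on the index. In the tame case, the cover $V \to [V/G]$ is finite flat but not necessarily étale, and $G$ may be infinitesimal. Here we would rely on the radicial part of \thmref{thmX:bootstrap}, which is exactly where the equicharacteristic hypothesis would be used.
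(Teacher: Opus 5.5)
Your argument has a genuine gap at the step you call the key input: the descendability of the finite cover $p : X' \to \sX$ when $\sX$ is a stack. You deduce it from \thmref{thmX:bootstrap} over algebraic spaces together with ``smooth descent of descendability''. But descendability cannot be checked on a smooth cover of the target, even with a uniform bound on the index. By \lemref{lem:index augmentation ideal}, it asks that $\varepsilon^N : I^{\otimes N} \to \un$ be null-homotopic, and jointly conservative pullbacks detect isomorphisms, not null-homotopies.

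Here is a counterexample. Take $u : \Spec(k) \to B\bG_m$ over a field $k$.
\begin{itemize}
\item Its base change along itself is $\bG_m \to \Spec(k)$. This has a section, so it is descendable of index $\le 1$ (\lemref{lem:descendability section}).
\item However, $c_1 : \un \to \un(1)[2]$ restricts to zero on $\Spec(k)$, so it lifts to some $\tilde c : \un \to I(1)[2]$.
\item Then $\varepsilon^N(N)[2N] \circ \tilde c^{\otimes N} = c_1^N$, which is nonzero for every $N$ because the rational Chow ring of $B\bG_m$ is polynomial on $c_1$. So $u$ is not descendable in $\DM_\Q$.
\end{itemize}

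The paper fills this step with genuinely global input.
\begin{itemize}
\item In the DM case this is \thmref{thm:descendability DM}. Start from an affine étale atlas, where \thmref{thm:descendability} applies. Rydh's dévissage then propagates descendability along finite étale surjections, which are themselves descendable so that \corref{cor:descendable local} applies. It also propagates along étale neighbourhoods of closed complements, via \lemref{lem:descendability and localization}.
\item In the tame case, the paper uses the Alper--Hall--Rydh extension $1 \to G^0 \to G \to H \to 1$; this is where the equicharacteristic hypothesis enters. It factors $U \to [U/G]$ as the finite radicial surjection $U \to [U/G^0]$ followed by the $H$-torsor $[U/G^0] \to [U/G]$.
\item For these two maps, the unit $\id \to f_*f^*$, respectively the Euler characteristic \eqref{eq:fetcons}, is invertible in $\DM_\Q$. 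Invertibility of a map, unlike its nullity, \emph{can} be checked after smooth pullback to algebraic spaces. This gives descendability of index $\le 1$ directly on the stack.
\end{itemize}
Your proposal only gestures at ``the radicial part'' and does not supply this factorization. Note that $V \to [V/G]$ is itself neither étale nor radicial, so no criterion for étale or radicial covers applies to it directly.

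The same observation repairs your DM case along your own lines. Instead of an arbitrary finite surjection from a scheme, take $p$ to be the $\Gamma$-torsor $V \to [V/\Gamma]$ from the standard étale-local description of $\sX$ over its coarse space. Its Euler characteristic pulls back along $p$ to multiplication by $|\Gamma|$. Hence it is invertible, and $p$ is descendable of index $\le 1$.

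With descendability of $p$ in hand, the rest of your argument is correct. It is a slightly leaner packaging than the paper's. The paper writes $\sF \simeq \Tot(g_{\bullet,*}g_\bullet^*\sF)$ and uses \remref{rem:weave totalization commutes} to move $f_!$ past the totalization. You instead observe that $\sF$ lies in the thick subcategory generated by objects $p_*(\sG)$. On these, $\alpha_\pi$ is invertible because $\alpha_q \simeq \alpha_\pi p_* \circ \pi_!\alpha_p$ with $\alpha_q$ and $\alpha_p$ invertible.
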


A similar result was claimed in \cite[Thm.~A.7]{virtual}, with the following ``proof'' (we consider the DM variant).
One can reduce to the case where $\sY$ is a scheme and $\sX$ is a DM stack.
It is known that one can find a proper representable surjection $g : Z \twoheadrightarrow \sX$ where $Z$ is a scheme, and then \v{C}ech descent gives $\sF \simeq \Tot( g_{\bullet,*}g_\bullet^*(\sF) )$ where $g_\bullet : Z_\bullet \to \sX$ is the \v{C}ech nerve.
By functoriality of $\alpha$ this reduces us to the invertibility of $\alpha_{g_n}$ and $\alpha_{f \circ g_n}$.
The gap here is that $f_!$ might not \textit{a priori} commute past the totalization.
This is precisely what is patched by the observation that $g$ is \emph{descendable} (\thmref{thm:descendability DM}).

\subsection{Acknowledgments}

I thank Tom Bachmann for helping me avoid inverting $2$ in \thmref{thmX:SHet}, Denis-Charles Cisinski for introducing me to the question of $h$-descent for $\SH_\et$, and Lucas Mann for explanations about descendability.
I further thank Bachmann, Cisinski, Niklas Kipp, Charanya Ravi, and Swann Tubach for comments on previous drafts.

I acknowledge support from the grants AS-CDA-112-M01 (Academia Sinica) and NSTC 112-2628-M-001-006.
I am grateful to the RIMS at Kyoto University for its hospitality.

\changelocaltocdepth{2}

\section{Descendability}

\subsection{Definitions}

We briefly review the notion of \emph{descendability} as studied by Mathew \cite[\S 3.3]{Mathew}; proofs of the following statements can be found either there or in \cite[\S 11.2]{BhattScholze}.
Let $\sD$ be a symmetric monoidal presentable stable \inftyCat, in which the tensor product is colimit-preserving in each argument.

\begin{defn}\label{defn:desc}
  A morphism $A \to B$ of $E_\infty$-algebras in $\sD$ is \emph{descendable} if the morphism $\{A\} \to \{\Tot_{\le n} (B^\bullet)\}_n$ is invertible in $\on{Pro}(\sD)$.
  Here, $B^\bullet$ denotes the \v{C}ech nerve of $A \to B$, the cosimplicial object whose $n$th term is the $(n+1)$-fold tensor product of $B$ over $A$, and $\{\Tot_{\le n} (B^\bullet) \}_n$ is its tower of partial totalizations, regarded as a pro-object of $\sD$.
\end{defn}

Descendability is closed under composition and extension of scalars.
Moreover, any symmetric monoidal functor preserves descendability.
See \cite[Cor.~3.21, Prop.~3.24]{Mathew}.
We will also say that an $E_\infty$-algebra $A$ is \emph{descendable} if the unit $\un \to A$ is descendable.

\begin{rem}\label{rem:descendable commute with limits}
  If $A \to B$ is descendable, then the pro-object $\{\Tot_{\le n} (B^\bullet)\}_n$ is essentially constant; in particular, we have (see e.g. \cite[Prop.~3.10]{Mathew}):
  \[ F \bigl(\Tot (B^\bullet) \bigr) \simeq \lim_n F \bigl( \Tot_{\le n} (B^\bullet) \bigr) \simeq \lim_n \Tot_{\le n} \bigl( F(B^\bullet) \bigr) \simeq \Tot \bigl( F(B^\bullet) \bigr) \]
  for any left-exact functor $F : \sD \to \sV$ valued in an \inftyCat with limits.
\end{rem}

We have the following criterion for descendability:

\begin{lem}
	Let $A \to B$ be a morphism of $E_\infty$-algebras in $\sD$.
	Let $\sC \sub \Mod_{A}(\sD)$ be the smallest thick subcategory which contains $B$ and is closed under $(-) \otimes M$ for every $M \in \Mod_{A}(\sD)$.
  Then $A \to B$ is descendable if and only if $\sC$ contains $A$.
\end{lem}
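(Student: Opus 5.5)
We prove the criterion by working inside the symmetric monoidal stable \inftyCat $\Mod_A(\sD)$, where $A$ is the unit. After replacing $\sD$ by $\Mod_A(\sD)$, we may assume $A = \un$. The plan is to compare both conditions with the standard ``fibre of the unit map'' description. Let $I = \Fib(\un \to B)$. The partial totalizations of the \v{C}ech nerve then sit in fibre sequences
\[ I^{\otimes(n+1)} \to \un \to \Tot_{\le n}(B^\bullet), \]
and under these sequences the transition maps $I^{\otimes(n+2)} \to I^{\otimes(n+1)}$ are $\varphi \otimes \id$, where $\varphi : I \to \un$ is the inclusion of the fibre. Establishing this identification is the one genuinely technical step. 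We prove it by induction on $n$, using that the cofibre of $I^{\otimes(n+1)} \to \un$ is built from tensor powers of $B$ in the same way as $\Tot_{\le n}$. This is the usual computation identifying the Adams tower with the partial totalizations, and we would cite \cite[Prop.~3.20]{Mathew} rather than redo the combinatorics.

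\emph{Forward direction.} Suppose the pro-object $\{\Tot_{\le n}(B^\bullet)\}$ is pro-isomorphic to the constant object $\un$. Then the pro-object $\{I^{\otimes(n+1)}\}$ is pro-zero. Concretely, some composite transition map $I^{\otimes m} \to \un$, namely $\varphi^{\otimes m}$, is nullhomotopic. The cofibre sequence
\[ I^{\otimes m} \to \un \to \Cofib(\varphi^{\otimes m}) \]
then exhibits $\un$ as a retract of $\Cofib(\varphi^{\otimes m})$. It remains to show that $\Cofib(\varphi^{\otimes m})$ lies in $\sC$, by induction on $m$ via the octahedral axiom:
\begin{itemize}
  \item The factorization $\varphi^{\otimes m} = \varphi \circ (\varphi^{\otimes(m-1)} \otimes \id_I)$ gives a cofibre sequence relating $\Cofib(\varphi^{\otimes(m-1)}) \otimes I$, $\Cofib(\varphi^{\otimes m})$ and $\Cofib(\varphi) = B$.
  \item The term $\Cofib(\varphi^{\otimes(m-1)}) \otimes I$ lies in $\sC$ because $\sC$ is a tensor ideal.
  \item Hence $\Cofib(\varphi^{\otimes m}) \in \sC$, and since $\sC$ is closed under retracts, $\un \in \sC$.
\end{itemize}

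\emph{Converse direction.} Let $\sC'$ be the full subcategory of objects $M$ for which the pro-object $\{I^{\otimes n} \otimes M\}_n$ is pro-zero, meaning that for some $n$ the map $\varphi^{\otimes n} \otimes \id_M$ is null. We check that $\sC'$ contains $\sC$:
\begin{itemize}
  \item \emph{Contains $B$.} The map $\varphi \otimes \id_B$ is null, since $B \otimes (-)$ splits $\un \to B$ via the multiplication.
  \item \emph{Tensor ideal.} If $\varphi^{\otimes n} \otimes \id_M$ is null, then so is $\varphi^{\otimes n} \otimes \id_{M \otimes N}$ for any $N$.
  \item \emph{Retracts.} Closure under retracts is clear.
  \item \emph{Cofibres.} Let $M' \to M \to M''$ be a cofibre sequence with nullity exponents $a$ for $M'$ and $b$ for $M''$. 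A diagram chase shows that $\varphi^{\otimes(a+b)} \otimes \id_M$ is null. Indeed, the map $\varphi^{\otimes b} \otimes \id_M$ factors through $I^{\otimes b} \otimes M'$, and then $\varphi^{\otimes a}$ kills it.
\end{itemize}
Hence if $\un \in \sC$, then $\un \in \sC'$, so $\varphi^{\otimes n}$ is null for some $n$. A tower in which some composite transition map is null is pro-zero, so $\{I^{\otimes n}\}$ is pro-zero and $\{\Tot_{\le n}(B^\bullet)\} \simeq \{\un\}$ in $\on{Pro}(\sD)$.

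The only real obstacle is the identification of the Tot tower with the cofibres of $\varphi^{\otimes n}$. Everything else consists of formal thick-ideal manipulations.
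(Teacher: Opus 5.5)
Your proposal is correct. It is essentially the standard argument of \cite[\S 3.3]{Mathew} and \cite[\S 11.2]{BhattScholze}, to which the paper defers instead of giving its own proof: working in $\Mod_A(\sD)$, identify the fibre of $\un \to \Tot_{\le n}(B^\bullet)$ with $I^{\otimes(n+1)}$, exhibit $\un$ as a retract of $\Cofib(\varphi^{\otimes m}) \in \sC$ for one direction, and use the thick $\otimes$-ideal of those $M$ with some $\varphi^{\otimes n} \otimes \id_M$ null for the other.

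There is one small slip in the cofibre step. Because $\varphi^{\otimes b} \otimes \id_M$ lifts along $M' \to M$, the map that factors is $\id_{I^{\otimes a}} \otimes \varphi^{\otimes b} \otimes \id_M$, and it factors through $I^{\otimes a} \otimes M'$, not $I^{\otimes b} \otimes M'$; then $\varphi^{\otimes a} \otimes \id_{M'}$ kills it.
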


We also have the following quantitative version:

\begin{defn}\label{defn:desc index}
  Let $A \to B$ be a morphism of $E_\infty$-algebras in $\sD$.
  We say $A \to B$ is \emph{descendable of index $\le N$}, for a natural number $N > 0$, if the canonical map $A \to \Tot_{\le N-1}(B^\bullet)$ admits a retraction in $\sD$.
\end{defn}

Since $\Tot_{\le 0}(B^\bullet) \simeq B$, descendability of index $\le 1$ is the condition that the unit $A \to B$ admits a retraction.
The index also admits the following description (see \cite[Prop.~4.9 (4)]{MNNNilpotence}).

\begin{lem}\label{lem:index augmentation ideal}
  Let $A \to B$ be a morphism of $E_\infty$-algebras in $\sD$, and let $I$ denote its fibre in $\sD$.
  Then $A \to B$ is descendable of index $\le N$ if and only if the map $\varepsilon^{N} : I^{\otimes_A N} \to A^{\otimes_A N} \simeq A$ is null-homotopic.
\end{lem}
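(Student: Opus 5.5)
The plan is to work in the stable \inftyCat $\Mod_A(\sD)$, writing $\otimes$ for $\otimes_A$, and to identify the fibres of the maps from $A$ to the partial totalizations. Concretely, I claim that for every $n \ge 0$ there is a natural fibre sequence
\[
  I^{\otimes (n+1)} \xrightarrow{\ \varepsilon^{n+1}\ } A \too \Tot_{\le n}(B^\bullet),
\]
where $\varepsilon : I \to A$ is the canonical map and $\varepsilon^{n+1}$ is its $(n+1)$-fold tensor power composed with $A^{\otimes(n+1)} \simeq A$. Once this is established, the lemma follows at once. In a stable \inftyCat, a map $A \to C$ with fibre $F \to A$ admits a retraction if and only if $F \to A$ is null-homotopic: a retraction splits the cofibre sequence, and conversely a nullhomotopy of $F \to A$ splits $A \to C$. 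Taking $n = N-1$ then gives exactly Definition~\ref{defn:desc index}.

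To prove the claim I induct on $n$. The case $n=0$ is the definition of $I$, since $\Tot_{\le 0}(B^\bullet) \simeq B$. For the inductive step I use the standard description of the partial totalizations of the Amitsur complex (as in \cite[Prop.~3.10 ff.]{Mathew} or \cite{MNNNilpotence}). The augmented cosimplicial object $A \to B^\bullet$ is obtained by tensoring $(n+1)$ copies of the two-term complex $A \to B$. Equivalently, $\Tot_{\le n}(B^\bullet)$ is the cofibre of the $(n+1)$-fold tensor power of the arrow $I \to A$, viewed as a cube of shape $(\Delta^1)^{n+1}$ with all non-initial vertices collapsed. More concretely, one checks that
\[
  \Tot_{\le n}(B^\bullet) \simeq \Cofib\bigl(I^{\otimes(n+1)} \to A\bigr)
\]
by verifying that the fibre of $\Tot_{\le n} \to \Tot_{\le n-1}$ (which is $\Omega^n$ of the $n$-th normalized term, i.e.\ of $\bar B^{\otimes n} \otimes B$ with $\bar B = \Cofib(A \to B) \simeq \Sigma I$) agrees with the fibre of $\Cofib(\varepsilon^{n+1}) \to \Cofib(\varepsilon^n)$. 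The latter is computed by the octahedral axiom from the factorization $\varepsilon^{n+1} = \varepsilon^n \circ (I^{\otimes n} \otimes \varepsilon)$, whose relevant cofibre is $I^{\otimes n} \otimes \Cofib(\varepsilon) \simeq I^{\otimes n} \otimes B$, shifted appropriately.

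The main obstacle is making these identifications compatible. It is not enough that the fibres abstractly agree; the maps $A \to \Tot_{\le n}$ must match the canonical maps $A \to \Cofib(\varepsilon^{n+1})$ coherently in $n$. I would handle this by working with the cubical model throughout. Dold--Kan for cosimplicial objects in a stable \inftyCat identifies $\Tot_{\le n}$ of the Čech nerve of $A \to B$ with the total cofibre of the punctured $(n+1)$-cube $\{A \to B\}^{\otimes(n+1)}$, and that total cofibre is visibly $\Cofib(I^{\otimes(n+1)} \to A)$. Alternatively, I would simply cite \cite[Prop.~4.9(4)]{MNNNilpotence}, where this is carried out, and present the inductive computation above as the justification.
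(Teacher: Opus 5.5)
Your proposal is correct and follows essentially the same route as the paper, which gives no proof of its own and just cites \cite[Prop.~4.9 (4)]{MNNNilpotence}: you identify the fibre of $A \to \Tot_{\le N-1}(B^\bullet)$ with $\varepsilon^{N} : I^{\otimes_A N} \to A$, then use that in a stable \inftyCat a map admits a retraction exactly when its fibre map is null. To make the identification rigorous, rely on your cubical argument rather than on ``Dold--Kan'' or the fibre-matching induction, which by itself does not pin down the extension: cofinality of nonempty subsets of $[n]$ in $\bDelta_{\le n}$ exhibits $\Tot_{\le n}(B^\bullet)$ as the limit of the punctured cube $(A \to B)^{\otimes_A (n+1)}$, and the total fibre of that cube is $I^{\otimes_A (n+1)}$.
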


The connection between Definitions~\ref{defn:desc} and \ref{defn:desc index} is as follows (see \cite[Lem.~11.20]{BhattScholze}).

\begin{lem}\label{lem:desc of some index}
  Let $A \to B$ be a morphism of $E_\infty$-algebras in $\sD$.
  Then $A \to B$ is descendable if and only if it is descendable of index $\le N$ for some $N$.
\end{lem}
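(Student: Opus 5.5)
We prove the two implications separately. Throughout we work in $\Mod_A(\sD)$, write $B^\bullet$ for the \v{C}ech nerve and $T_n = \Tot_{\le n}(B^\bullet)$, and use the following standard fact (cf.\ \cite[Prop.~3.20]{Mathew}): for every $n$, the fibre of $A \to T_{n-1}$ is identified with $I^{\otimes_A n}$, and the composite $I^{\otimes_A n} \to A$ is $\varepsilon^n$. This identification is the only input needed to connect the pro-object to the index.

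\emph{Index $\le N$ implies descendable.} By \lemref{lem:index augmentation ideal}, $\varepsilon^N : I^{\otimes N} \to A$ is null, hence so is $\varepsilon^{kN} = (\varepsilon^N)^{\otimes k}$ for every $k \ge 1$. We show that the fibre pro-object $\{I^{\otimes n}\}_n$, whose transition maps $I^{\otimes(n+1)} \to I^{\otimes n}$ are $\varepsilon \otimes \id$, is pro-zero. The $N$-fold transition $I^{\otimes(n+N)} \to I^{\otimes n}$ is $\varepsilon^N \otimes \id_{I^{\otimes n}}$, which is null. Hence the pro-system $\{I^{\otimes n}\}$ is zero in $\Pro(\sD)$. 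Applying the fibre sequence $\{I^{\otimes n}\} \to \{A\} \to \{T_{n-1}\}$ levelwise, which is a fibre sequence in the stable \inftyCat $\Pro(\sD)$, we conclude that $\{A\} \to \{T_n\}_n$ is an isomorphism.

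\emph{Descendable implies index $\le N$ for some $N$.} If $\{A\} \to \{T_n\}$ is a pro-isomorphism, its inverse is represented by a map $T_{N-1} \to A$ for some $N$. This map, composed with $A \to T_{N-1}$, agrees with $\id_A$ as a map from the constant pro-object. Since maps out of a constant pro-object into a constant pro-object are just maps in $\sD$, we obtain a genuine retraction of $A \to T_{N-1}$. This is exactly index $\le N$ in the sense of \defnref{defn:desc index}.

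The main obstacle is justifying the identification $\Fib(A \to T_{n-1}) \simeq I^{\otimes_A n}$ compatibly in $n$ and with the maps $\varepsilon$. We would prove it by induction on $n$ using the standard description of partial totalizations of an augmented \v{C}ech nerve, via $T_{n-1} \simeq \Cofib(I^{\otimes n} \to A)$, or simply cite \cite[Prop.~3.20]{Mathew}. After that, the rest of the argument is formal.
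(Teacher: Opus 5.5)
Your proof is correct. The paper does not prove this lemma itself; it only cites \cite[Lem.~11.20]{BhattScholze}, so what you give is a self-contained version of the standard argument.

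The implication ``descendable $\Rightarrow$ index $\le N$'' is exactly the usual one. Since $\pi_0\Maps_{\Pro(\sD)}(\{T_n\}_n, A) \simeq \colim_n \pi_0 \Maps(T_n, A)$, the pro-inverse is realized by an honest retraction at some finite stage.

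For the converse you show directly that the fibre tower is pro-nilpotent. The ``coherence in $n$'' that you flag as the main obstacle is not actually needed. Take $F_n := \Fib(A \to T_{n-1})$, which is a tower by functoriality. Then use the identification $F_n \simeq I^{\otimes_A n}$, with $F_{n+1}\to F_n$ corresponding to $\varepsilon\otimes\id$, only in the homotopy category, to see that $F_{n+N}\to F_n$ is null. That suffices for pro-zero, because maps into constant pro-objects are computed as filtered colimits of mapping spectra. The same identification also gives Lemma~\ref{lem:index augmentation ideal} directly: in a stable \inftyCat, a map has a retraction iff its fibre map is null. So you need not invoke that lemma separately.

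There is an alternative converse using a tool the paper already states, the thick-ideal criterion just before Definition~\ref{defn:desc index}. A retraction exhibits $A$ as a retract of $T_{N-1}$. By the fibre sequences of partial totalizations, $T_{N-1}$ lies in the thick subcategory generated by $B, B\otimes_A B, \dots, B^{\otimes_A N}$, hence in $\sC$, so $A \in \sC$. This avoids the fibre identification but gives no explicit control. Your route, by contrast, shows that the $N$-fold transitions of the fibre tower are null. The thick-ideal criterion is itself usually proved by an argument of the same kind, so the difference is mainly which standard fact you take as input.

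Finally, you work in $\Mod_A(\sD)$, while the paper phrases its definitions in $\sD$. This matches the cited sources and makes no difference for the paper's applications, where $A = \un_X$.
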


\begin{lem}\label{lem:monotone index}
  Let $A \to B$ be a morphism of $E_\infty$-algebras in $\sD$.
	If $B$ is descendable of index $\le N$, then so is $A$.
\end{lem}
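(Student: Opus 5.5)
The plan is to use the augmentation-ideal criterion of \lemref{lem:index augmentation ideal}, applied over the base algebra $\un$, and to factor the unit of $B$ through $A$. Write $\eta_A : \un \to A$ and $\eta_B : \un \to B$ for the units. Since $A \to B$ is a map of $E_\infty$-algebras, it preserves units, so $\eta_B$ factors as $\un \xrightarrow{\eta_A} A \to B$.

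Let $I_A = \Fib(\eta_A)$ and $I_B = \Fib(\eta_B)$, with their canonical maps $\varepsilon_A : I_A \to \un$ and $\varepsilon_B : I_B \to \un$. The factorization of $\eta_B$ gives a commutative square from $\eta_A$ to $\eta_B$ whose top arrow is $\id_\un$. Taking fibres yields a map $\phi : I_A \to I_B$ with $\varepsilon_B \circ \phi \simeq \varepsilon_A$.

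Tensoring $N$ copies of this relation gives a factorization
\[
  \varepsilon_A^{N} \simeq \varepsilon_B^{N} \circ \phi^{\otimes N} : I_A^{\otimes N} \to I_B^{\otimes N} \to \un^{\otimes N} \simeq \un .
\]
By hypothesis and \lemref{lem:index augmentation ideal}, $\varepsilon_B^{N}$ is null-homotopic. Hence so is $\varepsilon_A^{N}$, and applying \lemref{lem:index augmentation ideal} again shows that $A$ is descendable of index $\le N$.

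As a cross-check, one can argue directly from \defnref{defn:desc index}. The map $A \to B$ induces a map of \v{C}ech nerves $A^\bullet \to B^\bullet$ under the constant object $\un$, because the $n$th terms are the tensor powers $A^{\otimes n+1} \to B^{\otimes n+1}$. This gives a map $\Tot_{\le N-1}(A^\bullet) \to \Tot_{\le N-1}(B^\bullet)$ compatible with the maps from $\un$. Composing it with the given retraction $r : \Tot_{\le N-1}(B^\bullet) \to \un$ produces a retraction of $\un \to \Tot_{\le N-1}(A^\bullet)$.

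I do not expect a real obstacle. The only point needing care is that $\phi$ is compatible with the augmentations up to coherent homotopy, so that the tensor-power identity holds. This is automatic because $\phi$ comes from a commutative square in the stable \inftyCat $\sD$, and $\otimes$ is a functor of $\infty$-categories.
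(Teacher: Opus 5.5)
Your argument is correct and is essentially the paper's proof: the paper likewise takes the induced map $I_A \to I_B$ on fibres and factors $\varepsilon_A^{N}$ as $\varepsilon_B^{N} \circ \phi^{\otimes N}$. Your cross-check, which composes the retraction of $\un \to \Tot_{\le N-1}(B^\bullet)$ with the induced map $\Tot_{\le N-1}(A^\bullet) \to \Tot_{\le N-1}(B^\bullet)$, is also valid but not needed.
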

\begin{proof}
  There is an induced morphism $I_A \to I_B$ on fibres, and $\varepsilon_A^{N}$ factors as
	\[ \varepsilon_A^{N} : I_A^{\otimes N} \to I_B^{\otimes N} \xrightarrow{\varepsilon_B^{N}} \un. \qedhere \]
\end{proof}

The following lemma shows that descendability is ``descendable-local''.

\begin{lem}\label{lem:descendable local abstract}
  Let $A$ and $B$ be $E_\infty$-algebras in $\sD$.
  If $B$ is descendable of index $\le N$ and $B \to A \otimes B$ is descendable of index $\le M$, then $A$ is descendable of index $\le NM$.
\end{lem}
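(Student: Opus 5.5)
Write $I_A$, $I_B$ for the fibres of the units $\un \to A$ and $\un \to B$, and $J$ for the fibre of $B \to A \otimes B$. Note that $J \simeq I_A \otimes B$ in $\Mod_B(\sD)$. By \lemref{lem:index augmentation ideal}, applied to $A$ we must show that $\varepsilon_A^{NM} : I_A^{\otimes NM} \to \un$ is null-homotopic. Applied to the two hypotheses, we know two things:
\begin{itemize}
\item $\varepsilon_B^N : I_B^{\otimes N} \to \un$ is null;
\item the $B$-linear map $J^{\otimes_B M} \simeq I_A^{\otimes M} \otimes B \to B$ is null. This map is $\varepsilon_A^M \otimes \id_B$.
\end{itemize}

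The plan is to set $K = I_A^{\otimes M}$ and $\phi = \varepsilon_A^M : K \to \un$, so that $\varepsilon_A^{NM} = \phi^{\otimes N} : K^{\otimes N} \to \un$. It then suffices to prove the following claim: if $\phi : K \to \un$ becomes null after $(-)\otimes B$, then $\phi^{\otimes N}$ is null.

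To prove the claim, consider the cofibre sequence $I_B \to \un \to B$ and tensor it with $K$. The composite $K \xrightarrow{\phi} \un \to B$ equals $\phi \otimes \id_B$ precomposed with the unit $K \to K \otimes B$. Since $\phi \otimes \id_B$ is null, this composite is null. Hence $\phi$ factors through $I_B \to \un$, say as $\phi = \varepsilon_B \circ \psi$ with $\psi : K \to I_B$. Then
\[ \phi^{\otimes N} = \varepsilon_B^{\otimes N} \circ \psi^{\otimes N} : K^{\otimes N} \to I_B^{\otimes N} \to \un, \]
and this is null because $\varepsilon_B^N = \varepsilon_B^{\otimes N}$ is null.

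The only step needing care is the identification of $J$ with $I_A \otimes B$, together with the identification of $\varepsilon_J^{M}$ with $\varepsilon_A^M \otimes \id_B$ under $J^{\otimes_B M} \simeq I_A^{\otimes M} \otimes B$. Both follow because $B \otimes (-)$ is exact and symmetric monoidal from $\sD$ to $\Mod_B(\sD)$, and it carries the unit $\un \to A$ to $B \to B \otimes A$. Once that is in place, the factorization argument is formal, and I expect no real obstacle beyond this bookkeeping.
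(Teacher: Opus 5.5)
Your proposal is correct and follows the paper's proof essentially step for step. Nullity of $\varepsilon_A^M \otimes \id_B$ gives nullity of $\eta_B \circ \varepsilon_A^M$; this lets you lift $\varepsilon_A^M$ through $\varepsilon_B : I_B \to \un$, and the $N$-th tensor power of that factorization kills $\varepsilon_A^{NM}$. The identification of the fibre of $B \to A \otimes B$ with $I_A \otimes B$, which you spell out, is left implicit in the paper's diagram.
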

\begin{proof}
	Let $I_A$ and $I_B$ denote the fibres of the unit maps of $A$ and $B$, respectively.
	We have the diagram with exact rows
	\[\begin{tikzcd}
    I_A \ar{r}{\varepsilon_A}\ar{d}
    & \un \ar{r}{\eta_A}\ar{d}{\eta_B}
    & A \ar{d}
    \\
    I_A \otimes B \ar{r}{\varepsilon_A \otimes \id_B}
    & B \ar{r}
    & A \otimes B.
	\end{tikzcd}\]
	By assumption, the map $\varepsilon_A^M \otimes \id_B : I_A^{\otimes M} \otimes B \to B$ is null-homotopic, hence so is $\eta_B \circ \varepsilon_A^M : I_A^{\otimes M} \to B$.
	Thus from the exact triangle $I_B \to \un \to B$, there exists a lift $\phi : I_A^{\otimes M} \to I_B$ with $\varepsilon_B \circ \phi \simeq \varepsilon_A^M : I_A^{\otimes M} \to \un$.
	We then have
	\[
    \varepsilon_A^{NM} \simeq
    (\varepsilon_A^M)^{\otimes N} \simeq
    (\varepsilon_B \circ \phi)^{\otimes N} \simeq
    \varepsilon_B^N \circ \phi^{\otimes N} \simeq
    0,
	\]
	as claimed.
\end{proof}

For a presentable weave $\D$, we define:

\begin{defn}
  A morphism of algebraic spaces $f : Y \to X$ is \emph{descendable} (of index $\le N$) in the weave $\D$ if $\un_X \to f_*(\un_Y)$ is descendable (of index $\le N$), as a morphism of $E_\infty$-algebras in $\D(X)$.
\end{defn}

This property is closed under composition and base change.

\begin{cor}\label{cor:descendable local}
	Let $f : Y \to X$ and $p : X' \to X$ be morphisms, and let $f' : Y' \to X'$ denote the base change of $f$ along $p$.
	Suppose that $p_*$ satisfies the projection formula (Pr2) (see \cite[Def.~2.18]{weaves}), and the exchange morphism $\mrm{Ex}^*_* : p^*f_* \un_Y \to f'_{*} \un_{Y'}$ is invertible.
	If $p$ is descendable of index $\le N$ and $f'$ is descendable of index $\le M$, then $f$ is descendable of index $\le NM$.
\end{cor}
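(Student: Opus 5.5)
The plan is to deduce the corollary from \lemref{lem:descendable local abstract}, applied in $\sD = \D(X)$ to the $E_\infty$-algebras $A := f_*(\un_Y)$ and $B := p_*(\un_{X'})$. By hypothesis, $B$ is descendable of index $\le N$, so it remains to check that the unit $B \to A \otimes B$ is descendable of index $\le M$; the lemma then gives that $A$, i.e.\ $f$, is descendable of index $\le NM$.

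First I identify $A \otimes B$ with $p_* f'_*(\un_{Y'})$ as a $B$-algebra. The projection formula (Pr2) for $p_*$ gives $f_*(\un_Y) \otimes p_*(\un_{X'}) \simeq p_*(p^* f_*(\un_Y))$, and the invertible exchange map $\mrm{Ex}^*_*$ identifies this with $p_* f'_*(\un_{Y'})$. Both identifications are symmetric monoidal, since they come from the lax monoidal structure of $p_*$ and the monoidal structure of $p^*$. Under them, the map $B \to A \otimes B$ becomes $p_*$ applied to the unit $\un_{X'} \to f'_*(\un_{Y'})$. I would check this compatibility by noting that $\mrm{Ex}^*_*$ is compatible with the units.

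Next I transport the index bound. Let $I'$ be the fibre of $\un_{X'} \to f'_*\un_{Y'}$. By \lemref{lem:index augmentation ideal}, $\varepsilon'^M : I'^{\otimes M} \to \un_{X'}$ is null-homotopic. The fibre of $B \to A \otimes B$ is $p_* I'$, and as a $B$-module its $M$-fold tensor power over $B$ maps to $p_*(I'^{\otimes M})$ through the lax monoidal structure of $p_*$. The map $\varepsilon^M$ factors through $p_*(\varepsilon'^M) = 0$, so it is null. More cleanly, I would phrase this as: $p_*$, viewed as a functor $\Mod_{\un_{X'}}(\D(X')) \to \Mod_B(\D(X))$, is lax monoidal, and lax monoidal functors preserve a retraction of $\un \to \Tot_{\le M-1}$. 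I prefer the partial-totalization formulation of \defn{} \ref{defn:desc index} for this reason. The one subtlety is that $p_*$ commutes with finite limits, so it carries the Čech conerve's partial totalization over $\un_{X'}$ to the partial totalization over $B$. Here I again use the projection formula, to identify the $B$-relative tensor powers of $p_*f'_*\un_{Y'}$ with $p_*$ of the tensor powers of $f'_*\un_{Y'}$.

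I expect the main obstacle to be exactly this identification of relative tensor products: that $p_*(C) \otimes_B p_*(D) \simeq p_*(C \otimes D)$ when $C = f'_*\un_{Y'}$. The first step I would try is to reduce it to the projection formula, via $p_*(C)\otimes_B p_*(D) \simeq f_*\un_Y \otimes p_*(D) \simeq p_*(p^*f_*\un_Y \otimes D)$, and then invoke $\mrm{Ex}^*_*$ again. This suffices because every term of the Čech conerve is of this form.
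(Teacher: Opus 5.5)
Your proposal is correct and follows the paper's own proof: apply \lemref{lem:descendable local abstract} to $A = f_*\un_Y$ and $B = p_*\un_{X'}$, use (Pr2) and the invertibility of $\mrm{Ex}^*_*$ to identify $A \otimes B \simeq p_* f'_* \un_{Y'}$, and transport the index bound for $f'$ along $p_*$. Your augmentation-ideal argument, which factors $\varepsilon^M$ through $p_*(\varepsilon'^M) \simeq 0$ using only the lax monoidal structure of $p_*$, already settles that last step, so the identification of relative tensor powers that you flag as the main obstacle is not needed (although it does hold).
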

\begin{proof}
  Let $A = f_*\un_Y$ and $B = p_*\un_{X'}$; by the assumptions, $A \otimes B \simeq p_*(f'_*\un_{Y'})$.
  Hence \lemref{lem:descendable local abstract} shows that $B \to A \otimes B$ is descendable of index $\le M$ since $f'$ is.
\end{proof}

\subsection{Criteria for finite étale covers}
\label{ssec:etale descent}

Let $f : Y \to X$ be a finite étale morphism of algebraic spaces.
In order for $f$ to be descendable of index $\le 1$, it suffices that the composite
\begin{equation}\label{eq:fetcons}
  \un_X
  \xrightarrow{\mrm{unit}} f_*f^*(\un_X) \simeq f_!f^!(\un_X)
  \xrightarrow{\mrm{counit}} \un_X,
\end{equation}
be invertible.
(This is the Euler characteristic of the dualizable object $f_*(\un_Y)$.)
This condition can be made more concrete using the following fact from \cite[Prop.~2.2.5]{topinv} (see also \cite[Rem.~3.2.2]{topinv}):

\begin{lem}\label{lem:fet chi field}
  Let $\D$ be a topological weave.
	Let $X = \Spec(\kappa)$ be the spectrum of a field and $f : Y \to X$ a finite étale morphism of degree $d$.
	Then the Euler characteristic \eqref{eq:fetcons} is the class $d_\varepsilon \in \End_{\D(k(x))}(\un_{k(x)})$.
\end{lem}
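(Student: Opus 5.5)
The approach is to compute the Euler characteristic of the dualizable object $f_*(\un_Y)$ by decomposing $Y$ and reducing to a universal computation.

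First, write $Y = \coprod_{i} \Spec(L_i)$ with each $L_i/\kappa$ finite separable and $\sum_i [L_i:\kappa] = d$. Since $f_*$ takes finite coproducts to direct sums, $f_*(\un_Y) \simeq \bigoplus_i f_{i,*}(\un_{\Spec L_i})$. The composite \eqref{eq:fetcons} is the categorical trace of the identity on this dualizable object, and traces are additive over direct sums. So it suffices to treat $Y = \Spec(L)$ connected of degree $d$, provided we also know that $d_\varepsilon$ is additive in $d$. Additivity of $d_\varepsilon$ I would take from its definition as the $\varepsilon$-twisted integer, namely the sum of $d$ alternating terms $\un$ and $-\varepsilon$.

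Second, for $Y = \Spec(L)$ I would compare with a universal case. The trace of $\id_{f_*\un_Y}$ is compatible with any symmetric monoidal functor and with base change along $\kappa \to \kappa'$, since the unit/counit of $f^* \dashv f_*$ and the identification $f_* \simeq f_!$ (Ex$^*_!$) commute with base change for finite étale $f$. The natural strategy is to pass to a Galois closure $\kappa'$ of $L$, where $Y \times_\kappa \kappa'$ splits as $d$ copies of $\Spec(\kappa')$. Over a split cover, each summand is the identity correspondence $\un \to \un \to \un$, with trace $\id = 1$.

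Here a sign subtlety enters. The identification $f_*f^* \simeq f_!f^!$ for the fold map uses the purity/trace isomorphism, and for permuted sheets this introduces the swap $\varepsilon$ (the symmetry on $\un$ realized via the Thom class of the trivial rank-one bundle). So the split case yields $d_\varepsilon$ rather than $d$, which is exactly how the $\varepsilon$-integer arises.

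The main obstacle is descending the identity back from $\kappa'$ to $\kappa$. Pullback $\End(\un_\kappa) \to \End(\un_{\kappa'})$ need not be injective, and in general motivic settings the answer over $\kappa$ is the trace form in $\mathrm{GW}(\kappa)$, not $d_\varepsilon$. The extra input must therefore be the \emph{topological} hypothesis on the weave. Concretely, I would use topological invariance to identify $\D(\kappa)$ with a category depending only on the étale site of $\kappa$, which reduces the computation to a finite-Galois-equivariant one. In that setting the trace of a permutation representation of the Galois group on $d$ sheets is computed sheetwise and equals $d_\varepsilon$. If that identification is unavailable, the fallback is to cite \cite[Prop.~2.2.5]{topinv} for this step directly.
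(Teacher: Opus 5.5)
Your first two steps are inconsistent with each other, and neither yields $d_\varepsilon$. The trace of $\id_{f_*\un_Y}$ is indeed additive over $Y=\coprod_i\Spec L_i$. But $d_\varepsilon=\sum_{i=1}^d\langle(-1)^{i-1}\rangle$ is \emph{not} additive in $d$: one has $(a+b)_\varepsilon=a_\varepsilon+\langle-1\rangle^{a}\,b_\varepsilon$, so already $1_\varepsilon+1_\varepsilon=2$ differs from $2_\varepsilon=1+\langle-1\rangle$ unless $\langle-1\rangle=1$.

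Likewise there is no ``swap'' in the split case. For the fold map $f:\coprod_d X\to X$ one has $\D(\coprod_d X)\simeq\D(X)^d$, all four functors are computed sheetwise, and the identification $f_*f^*\simeq f_!f^!$ is the identity on each sheet. So \eqref{eq:fetcons} is the diagonal followed by the sum, i.e.\ $d$, which is exactly what your sentence ``each summand \dots\ has trace $1$'' gives. The symmetry $\varepsilon$ comes from permuting $\mathbf{P}^1$-factors, not sheets of a trivial cover. Since additivity and the case $d=1$ already force the split case to equal $d$, no sign bookkeeping can turn it into $d_\varepsilon$.

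The descent step cannot be supplied by the hypothesis that $\D$ is topological. $\SH$ is itself a topological weave (the universal one, as used in the proof of \thmref{thm:topinv}), and it does not satisfy topological invariance. Topological invariance would in any case not make $\D(\kappa)$ depend only on the small étale site of $\kappa$. Your remark that in $\SH(\kappa)$ the composite \eqref{eq:fetcons} for $Y=\Spec A$ is the trace form $\mathrm{Tr}_{A/\kappa}\langle1\rangle\in\mathrm{GW}(\kappa)$ (Hoyois) is the correct picture, and its image gives the answer in any topological weave via $\SH\to\D$.

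Consequently the statement as written cannot be proved. Take $\D=\SH$, $\kappa=\mathbf{R}$ and $Y=\Spec(\mathbf{R}\times\mathbf{R})$: the composite is $2$, while $2_\varepsilon=1+\langle-1\rangle$. The signature on $\mathrm{GW}(\mathbf{R})$ (equivalently, real realization) sends these to $2$ and $0$, so they differ.

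The paper gives no argument of its own here; it only cites \cite[Prop.~2.2.5]{topinv}, which is also your fallback. What does hold is weaker. Both classes have rank $d$ in $\mathrm{GW}(\kappa)$, and they agree whenever $\langle u\rangle=1$ for all units $u$, e.g.\ in an oriented weave, where both equal $d$ (the situation of \corref{cor:fet chi oriented}).

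The class $d_\varepsilon$ does arise elsewhere: for a presentation $L=\kappa[x]/(g)$, Euler's formula gives $\mathrm{Tr}_{L/\kappa}\langle 1/g'(\alpha)\rangle\simeq d_\varepsilon$ (in characteristic $\neq 2$). This is the trace normalized by such a presentation, not by the étale identification $f^!\simeq f^*$ used in \eqref{eq:fetcons}.
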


We say that $\D$ satisfies \emph{conservativity of stalks} for an algebraic space $X$ if the functors $x^* : \D(X) \to \D(k(x))$, $x \in X$, are jointly conservative.
For example, this holds for $X$ locally noetherian when $\D$ is a topological weave satisfying continuity and geometric generation (see \cite[Prop.~3.13]{weaves}).

\begin{cor}\label{cor:fet chi stalkwise}
  Let $\D$ be a topological weave satisfying conservativity of stalks for an algebraic space $X$.
	Let $f : Y \to X$ be a finite étale morphism of degree $d$.
	Then the following conditions are equivalent:
	\begin{thmlist}
	  \item The Euler characteristic \eqref{eq:fetcons} is invertible.
		\item The class $d_\varepsilon \in \End_{\D(X)}(\un_{X})$ is invertible.
		\item For every point $x \in X$, the class $d_\varepsilon \in \End_{\D(k(x))}(\un_{k(x)})$ is invertible.
  \end{thmlist}
\end{cor}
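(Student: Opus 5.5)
The plan is to identify the Euler characteristic with $d_\varepsilon$ globally on $X$ and then check invertibility stalkwise. Write $\chi_f \in \End_{\D(X)}(\un_X)$ for the composite \eqref{eq:fetcons}, and $d_\varepsilon \in \End_{\D(X)}(\un_X)$ for the corresponding class on $X$. Both are compatible with $*$-pullback along any morphism $g : X' \to X$; for $\chi_f$ this uses that $g^*$ is symmetric monoidal, that $f$ is finite étale (so $f_* \simeq f_!$ and $f^* \simeq f^!$ commute with base change), and that the unit and counit are carried to the unit and counit for the base change $f'$. Concretely, $g^*\chi_f \simeq \chi_{f'}$, where $f' : Y \times_X X' \to X'$, since $\chi_f$ is intrinsically the Euler characteristic (trace of the identity) of the dualizable object $f_*\un_Y$ and symmetric monoidal functors preserve traces.

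The steps are then as follows. For (i)$\Leftrightarrow$(iii), apply this at each point $x : \Spec k(x) \to X$. Then $x^*\chi_f = \chi_{f_x}$, where $f_x$ is finite étale of degree $d$ over the field $k(x)$. By \lemref{lem:fet chi field}, $\chi_{f_x} = d_\varepsilon$ in $\End(\un_{k(x)})$. An endomorphism $\phi$ of $\un_X$ is invertible if and only if its cofibre vanishes. Since $x^*$ is exact, this cofibre vanishes if and only if $x^*\mathrm{Cofib}(\phi) = \mathrm{Cofib}(x^*\phi)$ vanishes for all $x$, by conservativity of stalks. Hence $\chi_f$ is invertible if and only if each $x^*\chi_f = d_\varepsilon$ is invertible. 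The same argument applied to $\phi = d_\varepsilon$ on $X$ gives (ii)$\Leftrightarrow$(iii), using that $x^*(d_\varepsilon) = d_\varepsilon$, which holds because $d_\varepsilon$ is built from $\varepsilon$ and the integer $d$ by ring operations in $\End(\un)$, and $\varepsilon$ is stable under pullback.

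The main obstacle is the compatibility $x^*\chi_f \simeq \chi_{f_x}$. I would handle it via the trace description: $\chi_f$ is the trace of $\id$ on the dualizable object $f_*\un_Y$, where the duality data come from the self-duality of $f_*\un_Y$ for finite étale $f$. By the base change isomorphism, $x^*$ carries $f_*\un_Y$ with its duality data to $f_{x,*}\un$ with its duality data. One has to check that the identification $f_* \simeq f_!$ is compatible with base change, which is part of the weave axioms. If the trace formulation is awkward, an alternative is to avoid identifying $\chi_f$ with $d_\varepsilon$ on $X$ at all, and argue (i)$\Leftrightarrow$(iii) and (ii)$\Leftrightarrow$(iii) separately, as above.
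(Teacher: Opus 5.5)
Your proposal is correct and takes essentially the same approach as the paper. Both use the compatibility of the Euler characteristic \eqref{eq:fetcons} and of $d_\varepsilon$ with $*$-pullback, check invertibility stalkwise via conservativity of stalks, and identify the stalks using \lemref{lem:fet chi field}. Your trace argument for $x^*\chi_f \simeq \chi_{f_x}$ simply spells out the functoriality that the paper asserts in one line.
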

\begin{proof}
	Since \eqref{eq:fetcons} and $d_\varepsilon$ are both functorial in $X$, the first two claims are both equivalent to the third by conservativity of stalks.
\end{proof}

Recall that an \emph{orientation} of $\D$ is a factorization of the Thom twist map $\K \to \uPic$ through the rank map $\K \to \Z$.
By construction of $d_\varepsilon$, such an orientation identifies $d_\varepsilon \simeq d$ in $\End_{\D(X)}(\un_X)$ for every $X$.
In particular:

\begin{cor}\label{cor:fet chi oriented}
  Let $\D$ be an oriented topological weave satisfying conservativity of stalks for an algebraic space $X$.
	Let $f : Y \to X$ be a finite étale morphism of degree $d$.
	Then the Euler characteristic \eqref{eq:fetcons} is invertible if and only if the class $d \in \End_{\D(X)}(\un_{X})$ is invertible.
\end{cor}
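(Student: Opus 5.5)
The plan is to reduce directly to \corref{cor:fet chi stalkwise}, using the orientation to identify the class $d_\varepsilon$ with the integer $d$. That corollary already gives, for $\D$ satisfying conservativity of stalks for $X$ and $f$ finite étale of degree $d$, that the Euler characteristic \eqref{eq:fetcons} is invertible if and only if $d_\varepsilon \in \End_{\D(X)}(\un_X)$ is invertible. So it suffices to show that $d_\varepsilon = d$ in $\End_{\D(X)}(\un_X)$ whenever $\D$ is oriented.

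For this identification, I would recall how $d_\varepsilon$ is built. It is the image of $d$ under the map $\Z \to \End(\un_X)$ coming from the symmetric monoidal structure on $\uPic$. Concretely, it is a sum of $d$ copies of the class $\langle -1\rangle$-type twist $\varepsilon$, or equivalently it records the trace of the symmetry on the Thom twist of the trivial bundle. This is the sense in which the paper writes ``by construction of $d_\varepsilon$.''

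An orientation factors the Thom twist functor $\K \to \uPic$ through the rank $\K \to \Z$. In particular, the symmetry automorphism of $\vb{\cO}\otimes\vb{\cO}$, which defines $\varepsilon$, becomes the symmetry of $\Z$ summed with itself. That symmetry is trivial in the discrete target $\Z$, so $\varepsilon = 1$ and hence $d_\varepsilon = d\cdot 1 = d$. Combining this with \corref{cor:fet chi stalkwise} (ii)$\Leftrightarrow$(i) gives the claim.

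The only genuine point is making the assertion $\varepsilon \simeq 1$ precise. The key fact is that an $E_\infty$-map out of the discrete space $\Z$ sends the swap on $1+1$ to the identity. Beyond that, everything is a formal consequence of the earlier corollary. If one prefers to avoid $X$-level statements, one can equally check $d_\varepsilon = d$ stalkwise via item (iii) and \lemref{lem:fet chi field}, since the orientation is compatible with pullback to each $k(x)$.
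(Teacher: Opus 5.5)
Your argument is the paper's: the corollary follows from \corref{cor:fet chi stalkwise}, (i)$\Leftrightarrow$(ii), once the orientation identifies $d_\varepsilon \simeq d$, and the paper simply asserts that identification ``by construction of $d_\varepsilon$''. One correction to your description: $d_\varepsilon = \sum_{i=1}^{d}\langle(-1)^{i-1}\rangle$, not a sum of $d$ copies of a single class, and the swap on $\vb{\cO}\otimes\vb{\cO}$ that your orientation argument trivializes is $\langle -1\rangle$ (which is $-\varepsilon$ in Morel's notation); since that class becomes $1$, you still get $d_\varepsilon = d$.
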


\begin{cor}\label{cor:fet chi rational}
  Let $\D$ be an oriented $\bQ$-linear topological weave satisfying conservativity of stalks for an algebraic space $X$.
  Then for every finite étale surjection $f : Y \twoheadrightarrow X$, the Euler characteristic \eqref{eq:fetcons} is invertible.
\end{cor}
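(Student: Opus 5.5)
The plan is to reduce directly to \corref{cor:fet chi oriented}. Let $f : Y \twoheadrightarrow X$ be a finite étale surjection. By \corref{cor:fet chi oriented}, applied to the oriented topological weave $\D$ (which satisfies conservativity of stalks for $X$ by hypothesis), the Euler characteristic \eqref{eq:fetcons} is invertible if and only if the class $d \in \End_{\D(X)}(\un_X)$ is invertible.

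One subtlety is that the degree of $f$ need only be locally constant on $X$, not constant. I would handle this as follows. The degree function $x \mapsto \deg(f_x)$ is locally constant, so $X$ decomposes into a disjoint union of open and closed pieces $X_d$ on which $f$ has constant degree $d$. Each of these pieces inherits conservativity of stalks, since its points are a subset of those of $X$ and the relevant restriction functors are compatible. Moreover, invertibility of an endomorphism of $\un_X$ can be checked stalkwise, which is exactly what conservativity of stalks gives.

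Alternatively, one can bypass the decomposition. Apply \corref{cor:fet chi stalkwise}(iii) directly, or run its argument pointwise: at each $x \in X$ the fibre $Y_x \to \Spec k(x)$ is finite étale of some degree $d_x$. By \lemref{lem:fet chi field}, together with the orientation identifying $(d_x)_\varepsilon \simeq d_x$, the stalk of \eqref{eq:fetcons} at $x$ equals $d_x \in \End(\un_{k(x)})$.

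Surjectivity of $f$ ensures $d_x \ge 1$ for every point $x$. Since $\D$ is $\Q$-linear, $\End(\un_{k(x)})$ is a $\Q$-algebra, so the integer $d_x \ge 1$ is invertible in it. Hence \eqref{eq:fetcons} is invertible on every stalk, and by conservativity of stalks it is invertible on $X$.

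The only step needing care is the compatibility of \eqref{eq:fetcons} with pullback to points, which requires base change $x^* f_* \simeq f_{x,*} x^*$ for the finite étale morphism $f$. This holds in a topological weave because $f$ is proper, so $f_* \simeq f_!$ and base change applies. It is the same functoriality already used in the proof of \corref{cor:fet chi stalkwise}, so it is not a real obstacle.
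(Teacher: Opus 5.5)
Your proposal is correct and is essentially the argument the paper intends: the paper states the corollary without proof as an immediate consequence of \corref{cor:fet chi oriented}, since surjectivity forces degree $d \ge 1$ and $\bQ$-linearity makes $d$ invertible. Your pointwise version of the proof of \corref{cor:fet chi stalkwise} handles the fact that the degree is only locally constant: at each $x$ the Euler characteristic is $(d_x)_\varepsilon \simeq d_x$ by \lemref{lem:fet chi field} and the orientation. This is preferable to your first idea of decomposing $X = \coprod_d X_d$, because when $X$ is not quasi-compact that decomposition can be infinite, and radditivity only covers finite coproducts.
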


\begin{cor}\label{cor:fet chi approx}
  Let $\D$ be a topological weave satisfying continuity and geometric generation.
  Let $f : Y \to X$ be a finite étale morphism of constant degree $d$ between qcqs algebraic spaces over a noetherian algebraic space $S$.
  If the class $d_\varepsilon \in \End_{\D(\kappa)}(\un_\kappa)$ is invertible for every field $\kappa$ over $S$, then the Euler characteristic \eqref{eq:fetcons} of $f$ is invertible; in particular, $f$ is descendable of index $\le 1$.
\end{cor}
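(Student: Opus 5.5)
The idea is to reduce to \corref{cor:fet chi stalkwise} by noetherian approximation, using continuity to pass from $X$ to a noetherian model where conservativity of stalks holds. Since $X$ is qcqs over the noetherian base $S$, I would write $X \simeq \lim_\alpha X_\alpha$ as a cofiltered limit of finitely presented (hence noetherian) algebraic spaces $X_\alpha$ over $S$ with affine transition maps. Because $f$ is finite étale, hence finitely presented, it descends to a finite étale morphism $f_\alpha : Y_\alpha \to X_\alpha$ for $\alpha$ large, with $Y \simeq Y_\alpha \times_{X_\alpha} X$. Constancy of degree $d$ also descends after enlarging $\alpha$: the degree is a locally constant function on $X_\alpha$, the locus where it equals $d$ is open and closed, and it contains the image of $X$, so by quasi-compactness some $X_\beta$ lies inside it.

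Next I would use that the Euler characteristic \eqref{eq:fetcons} is compatible with $*$-pullback. For a finite étale $f$, the unit and counit maps, and the identification $f_* \simeq f_!$, $f^* \simeq f^!$, all commute with base change. So the Euler characteristic of $f$ is the $*$-pullback along $X \to X_\alpha$ of that of $f_\alpha$, and by \lemref{lem:fet chi field} it is identified with $d_\varepsilon$ on each $X_\alpha$. It is therefore enough to show that $d_\varepsilon$ is invertible in $\End_{\D(X_\alpha)}(\un_{X_\alpha})$ for a single $\alpha$, because invertibility is preserved by pullback.

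$X_\alpha$ is locally noetherian, since it is of finite type over $S$. Continuity and geometric generation then give conservativity of stalks for $X_\alpha$ by \cite[Prop.~3.13]{weaves}. By \corref{cor:fet chi stalkwise}, invertibility of $d_\varepsilon$ on $X_\alpha$ reduces to invertibility of $d_\varepsilon$ in $\End_{\D(k(x))}(\un_{k(x)})$ for each point $x \in X_\alpha$. Each residue field $k(x)$ is a field over $S$, so the hypothesis covers it. The Euler characteristic of $f$ is then invertible, and $f$ is descendable of index $\le 1$ by the remark preceding \lemref{lem:fet chi field}: the counit gives a retraction of $\un_X \to f_*\un_Y$.

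I expect the main obstacle to be the compatibility of the Euler characteristic and of $d_\varepsilon$ with pullback. This means checking that the equivalence $f_* \simeq f_!$ for finite étale maps, together with the unit and counit, is stable under base change in a topological weave. I would get this from the functoriality built into the weave formalism, specifically the base change isomorphisms and the purity identification $f^! \simeq f^*$ for étale $f$. The approximation step itself is standard.
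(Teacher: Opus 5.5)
Your proposal is correct and matches the paper's proof step for step. The steps are the same: approximate $X$ by finitely presented, hence noetherian, $X_\alpha$ over $S$, descend $f$ to a finite étale $f_\alpha$ of constant degree $d$, use conservativity of stalks on $X_\alpha$ together with \corref{cor:fet chi stalkwise}, and pull the invertible Euler characteristic back along $X \to X_\alpha$. Two small differences: the paper fixes the degree by simply replacing $X_\alpha$ with its open and closed degree-$d$ locus, so no further limit argument is needed; and on $X_\alpha$ the link between the Euler characteristic and $d_\varepsilon$ should be cited from \corref{cor:fet chi stalkwise} rather than directly from \lemref{lem:fet chi field}, which is stated only over fields.
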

\begin{proof}
  When $X$ is locally noetherian, $\D$ satisfies conservativity of stalks for $X$ (see \cite[Prop.~3.13]{weaves}), so the claim follows from \corref{cor:fet chi stalkwise}.
  In general, write $X$ as the limit of a cofiltered system $(X_\alpha)_\alpha$ of algebraic spaces of finite presentation over $S$, with affine transition maps (see \cite[Thm.~D]{RydhApprox}).
  By \cite[Props.~B.2, B.3]{RydhApprox} $f$ is the base change of some finite étale $f_\alpha : Y_\alpha \to X_\alpha$ along $u_\alpha : X \to X_\alpha$ (for some $\alpha$).
  After replacing $X_\alpha$ by the closed and open subspace over which $f_\alpha$ is of degree $d$ (which contains the image of $u_\alpha$), we may moreover assume that $f_\alpha$ is of constant degree $d$.
  Since $X_\alpha$ is noetherian, the Euler characteristic of $f_\alpha$ is invertible by the previous case, hence so is that of its base change $f$.
\end{proof}

\begin{cor}\label{cor:fet chi qcqs}
  Let $\D$ be an oriented $\bQ$-linear topological weave satisfying continuity and geometric generation.
  Then for every finite étale surjection $f : Y \twoheadrightarrow X$ of qcqs algebraic spaces, the Euler characteristic \eqref{eq:fetcons} is invertible; in particular, $f$ is descendable of index $\le 1$.
\end{cor}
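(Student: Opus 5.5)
The plan is to reduce to \corref{cor:fet chi approx} by checking its hypothesis fibrewise over fields. Work over the noetherian base $S = \Spec(\Z)$: every qcqs algebraic space lies over it. The main subtlety is that \corref{cor:fet chi approx} wants $f$ to have \emph{constant} degree $d$, whereas a general finite étale surjection $f : Y \twoheadrightarrow X$ only has locally constant degree.

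To handle this, first decompose $X$. Since $f$ is finite étale and $X$ is qcqs, the function $x \mapsto \deg(f_x)$ is locally constant. It takes finitely many values, because $X$ is quasi-compact. So $X = \coprod_{d} X_d$ is a finite disjoint union of closed and open subspaces, and over each $X_d$ the map $f$ has constant degree $d$. Surjectivity of $f$ gives $d \ge 1$ on every nonempty piece.

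Next, reduce to the constant-degree case. Since $\D(X) \simeq \prod_d \D(X_d)$ for a finite disjoint union (Nisnevich/Zariski excision in a topological weave), both the unit, the counit, and the Euler characteristic \eqref{eq:fetcons} decompose compatibly. Hence it suffices to prove invertibility on each $X_d$.

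Now fix $d \ge 1$ and apply \corref{cor:fet chi approx} to $f|_{X_d}$. We must check that $d_\varepsilon \in \End_{\D(\kappa)}(\un_\kappa)$ is invertible for every field $\kappa$. Since $\D$ is oriented, $d_\varepsilon \simeq d$, as noted before \corref{cor:fet chi oriented}. Since $\D$ is $\Q$-linear and $d \ge 1$, the integer $d$ is invertible. \corref{cor:fet chi approx} then gives invertibility of the Euler characteristic on $X_d$.

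Finally, the "in particular" follows because an invertible Euler characteristic provides a retraction of the unit $\un_X \to f_*\un_Y$, namely the counit composed with the inverse. That retraction is exactly descendability of index $\le 1$.

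The only step needing any care is the splitting off of the empty-degree and constant-degree pieces. That step is routine, so I expect no real obstacle.
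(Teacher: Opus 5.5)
Your proposal is correct and follows the paper's proof: reduce by radditivity to a finite étale surjection of constant degree $d>0$, use the orientation to identify $d_\varepsilon \simeq d$ over every field and $\bQ$-linearity to invert it, and conclude by \corref{cor:fet chi approx} with base $\Spec(\bZ)$. You also spell out the finite decomposition into constant-degree pieces and the retraction (inverse of the Euler characteristic composed with the counit) giving index $\le 1$, which the paper leaves implicit.
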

\begin{proof}
  By radditivity, we may assume that $f$ is of constant degree $d>0$.
  As in \corref{cor:fet chi oriented}, the orientation identifies $d_\varepsilon \simeq d$ in $\End_{\D(\kappa)}(\un_\kappa)$ for every field $\kappa$, and $d$ is invertible by $\bQ$-linearity.
  The claim now follows from \corref{cor:fet chi approx}, applied with $B = \Spec(\bZ)$.
\end{proof}

\begin{lem}\label{lem:fet desc torsor}
  Let $\D$ be a radditive weave.
  Then every finite étale surjection is descendable (of index $\le N$) if and only if for every finite group $G$, every $G$-torsor is descendable (of index $\le N$).
\end{lem}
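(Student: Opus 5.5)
One direction is immediate: a $G$-torsor $P \to X$ for a finite group $G$ is a finite étale surjection, so if every finite étale surjection is descendable (of index $\le N$), so is every torsor. The content lies in the converse. The plan is to dominate an arbitrary finite étale surjection by a torsor and then use \lemref{lem:monotone index} to transfer descendability down.

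\emph{Reduction to constant degree.} Let $f : Y \twoheadrightarrow X$ be a finite étale surjection. The degree of $f$ is locally constant, so $X$ decomposes as a disjoint union of open and closed subspaces $X_d$ over which $f$ has constant degree $d \ge 1$; if $X$ is quasi-compact, only finitely many $d$ occur. Radditivity gives $\D(X) \simeq \prod_d \D(X_d)$, compatibly with $f_*$ and with the unit. Both the \v{C}ech nerve and the partial totalizations are then computed componentwise. Hence $f$ is descendable of index $\le N$ if and only if each restriction $f_d$ is. If infinitely many components are allowed, a uniform index is still fine for the index-$\le N$ statement, since a retraction can be taken componentwise. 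For plain descendability, \lemref{lem:desc of some index} is needed together with the finiteness of the decomposition; I expect the statement is implicitly for qcqs spaces, as in Theorem~\ref{thmX:bootstrap}.

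\emph{Galois closure.} Assume now that $f$ has constant degree $d$. Let $P \to X$ be the $\mathfrak S_d$-torsor of orderings of the fibres, namely the open and closed subspace of the $d$-fold fibre product $Y \times_X \cdots \times_X Y$ where all coordinates are pairwise distinct. Étale locally $Y \simeq \coprod_{i=1}^d X$, and then $P$ is the trivial $\mathfrak S_d$-torsor. So $P \to X$ is an $\mathfrak S_d$-torsor, in particular finite étale and surjective. The first projection gives a morphism $P \to Y$ over $X$. Applying $f_*$-type pushforward along $P \to Y \to X$, we obtain a morphism of $E_\infty$-algebras $f_*\un_Y \to g_*\un_P$ under $\un_X$, where $g : P \to X$ is the structure map. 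This holds because the functor $Z \mapsto (Z \to X)_*\un_Z$ is lax monoidal and sends $X$-morphisms to algebra maps.

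\emph{Conclusion.} By hypothesis $g$ is descendable of index $\le N$. \lemref{lem:monotone index}, applied to the algebra map $f_*\un_Y \to g_*\un_P$, then shows that $f_*\un_Y$ is descendable of index $\le N$. In the unindexed version, \lemref{lem:desc of some index} converts descendability of $g$ into some finite index, which then passes to $f$.

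The main obstacle is the first step, namely making precise that radditivity lets the index and descendability be checked on each degree component. This requires checking that the product decomposition of $\D(X)$ is symmetric monoidal and compatible with $f_*$. The torsor construction itself is standard.
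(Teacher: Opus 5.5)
Your proposal is correct and follows the paper's proof essentially verbatim: reduce to constant degree $d$ by radditivity, dominate $f$ by the $S_d$-torsor $g : Y^{(d)} = Y^d \setminus \Delta^d \to X$, which factors through $Y$ via the first projection, and apply \lemref{lem:monotone index} to the resulting algebra map $f_*\un_Y \to g_*\un_{Y^{(d)}}$. Your extra care about the constant-degree reduction and about invoking \lemref{lem:desc of some index} for the unindexed version only spells out what the paper leaves implicit; this concerns finitely many degree components on a quasi-compact base, since radditivity covers only finite coproducts.
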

\begin{proof}
  Let $f : Y \twoheadrightarrow X$ be a finite étale surjection of degree $d$; by radditivity we may assume that $d$ is constant.
  The permutation action of the symmetric group $S_d$ on $Y^d := Y^{\fibprod_X d}$ is free and transitive away from the big diagonal
  \[ \Delta^d := \bigcup_{1 \le i<j \le d} \pr_{i,j}^{-1}(Y) \;\sub\; Y^d, \]
  where $\pr_{i,j} : Y^d \to Y \fibprod_X Y$ are the projections, and $Y$ is regarded as a subscheme of $Y \fibprod_X Y$ via the (open and closed) diagonal $Y \to Y \fibprod_X Y$.
  Thus the map
  \[
    g : Y^{(d)} := Y^d \setminus \Delta^d \sub Y^d \longrightarrow X
  \]
  is an $S_d$-torsor.
  It can be expressed equivalently as the composite
  \[
    g : Y^{(d)} \xrightarrow{p} Y \xrightarrow{f} X,
  \]
  where $p$ is the restriction of the first projection of $Y^d$.
  The unit of $p$ gives rise to a morphism of $E_\infty$-algebras
  \[ f_*\un_Y \to f_*p_*\un_{Y^{(d)}} \simeq g_*\un_{Y^{(d)}}. \]
  Now \lemref{lem:monotone index} shows that if $g$ is descendable of index $\le N$, then so is $f$.
\end{proof}

\subsection{Criteria for finite radicial covers}
\label{ssec:top inv}

\begin{lem}\label{lem:frad}
  Let $\D$ be a weave satisfying nil-invariance.
  Then for every finite radicial surjection $f : Y \twoheadrightarrow X$, the following conditions are equivalent:
  \begin{thmlist}
    \item The functor $f^*$ is conservative.
    \item The functor $f^*$ is an equivalence.
  \end{thmlist}
  Moreover, in this case there is a canonical isomorphism $f^* \simeq f^!$.
\end{lem}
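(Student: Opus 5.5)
The implication (ii)$\Rightarrow$(i) is trivial, so the content is that conservativity of $f^*$ forces $f^*$ to be an equivalence. I will show that both the unit $\id \to f_*f^*$ and the counit $f^*f_* \to \id$ are invertible.

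\textbf{Step 1: the counit.} A finite radicial surjection is a universal homeomorphism. Hence the diagonal $\Delta : Y \to Y \fibprod_X Y$ is a closed immersion which is surjective, i.e.\ a nil-immersion. By nil-invariance, $\Delta^*$ is an equivalence with inverse $\Delta_*$, equivalently $\Delta_* \simeq \Delta_!$ is an equivalence. Let $\pr_1,\pr_2 : Y\fibprod_X Y \to Y$ be the projections, and use proper base change for the finite (hence proper) morphism $f$, so that $f^*f_* \simeq \pr_{2,*}\pr_1^*$. Since $\pr_i \circ \Delta = \id$ and $\Delta_*\Delta^*\simeq\id$, we get
\[ \pr_{2,*}\pr_1^* \simeq \pr_{2,*}\Delta_*\Delta^*\pr_1^* \simeq \id. \]
I then need to check that this identification is the counit. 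I would do this by tracing the base-change square, which restricts to the diagonal. This bookkeeping is where I expect the main (if mild) obstacle: identifying the abstract isomorphism with the canonical counit, rather than merely producing some isomorphism.

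\textbf{Step 2: the unit.} By the triangle identities, the composite $f^* \to f^*f_*f^* \to f^*$ is the identity, and the second map is invertible by Step 1. Hence $f^*(\eta)$ is invertible for the unit $\eta : \id \to f_*f^*$. Conservativity of $f^*$ then makes $\eta$ invertible, so $f^*$ is an equivalence with inverse $f_*$. The converse is immediate.

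\textbf{Step 3: $f^*\simeq f^!$.} Since $f$ is proper, $f_!\simeq f_*$. The right adjoint of $f_!$ is $f^!$, and the right adjoint of the equivalence $f_*$ is its inverse $f^*$. By uniqueness of adjoints, $f^!\simeq f^*$, canonically.
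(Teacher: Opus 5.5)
Your proposal is correct and follows essentially the same route as the paper. Nil-invariance for the diagonal $\Delta$ gives invertibility of the counit $f^*f_*\to\id$; the triangle identities plus conservativity then give the unit; and $f^!\simeq f^*$ follows by uniqueness of right adjoints to $f_!\simeq f_*$.

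The bookkeeping you flag in Step~1 is exactly what the paper defers to the proof of \cite[Prop.~2.2.1]{topinv}. It reduces to the standard compatibility that, under the base change isomorphism $f^*f_*\simeq \pr_{2,*}\pr_1^*$, the counit corresponds to $\pr_{2,*}$ applied to the unit $\id\to\Delta_*\Delta^*$ (whiskered with $\pr_1^*$), which is invertible because $\Delta^*$ is an equivalence.
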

\begin{proof}
  The diagonal $\Delta : Y \to Y\fibprod_X Y$ is a nil-immersion.
  By nil-invariance, we have that $\Delta^* \simeq \Delta^!$ is an equivalence.
  From this it follows that the counit $f^*f_* \to \id$ is invertible (see proof of \cite[Prop.~2.2.1]{topinv} for details).
  Then by the triangle identities, the unit $\id \to f_*f^*$ is invertible if and only if $f^*$ is conservative.
  When this holds, i.e. $f^*$ is an equivalence, it follows formally that its right adjoint $f_* \simeq f_!$ is also an equivalence, and that its left and right adjoints $f^*$ and $f^!$ are tautologically identified.
\end{proof}

We say that a weave $\D$ satisfies \emph{topological invariance} if it satisfies nil-invariance and, for every finite radicial surjection $f : Y \twoheadrightarrow X$, the functor $f^*$ is an equivalence.
By \cite[Thm.~2.1.1, Rem.~2.2.9]{topinv}, we have:

\begin{thm}\label{thm:topinv}
	Let $\D$ be a topological weave and $f : Y \twoheadrightarrow X$  a finite radicial surjection.
  If every prime number is invertible either in $\sO_X$ or in $\End_{\D(X)}(\un_X)$, then $f^*$ is an equivalence.
  In particular, $\D[\sP^{-1}]$ satisfies topological invariance on the category of algebraic spaces over $X$, where $\sP$ is the set of primes not invertible in $\sO_X$.
\end{thm}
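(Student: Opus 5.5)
By \lemref{lem:frad} it suffices to show that $f^*$ is conservative, since a topological weave satisfies nil-invariance. For the second assertion I would apply the first to $\D[\sP^{-1}]$ and to every algebraic space over $X$. This works because a prime not invertible in $\sO_X$ is, by definition, inverted in the coefficients. So fix $\sF \in \D(X)$ with $f^*\sF \simeq 0$; the goal is $\sF \simeq 0$.

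\textbf{Reductions.} I would first reduce to $X$ qcqs, and then, by approximation as in the proof of \corref{cor:fet chi approx} together with continuity, to $X$ of finite presentation over $\Spec(\bZ)$, hence noetherian. Throughout I would use that base change along a finite radicial surjection is again one.

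\textbf{Noetherian induction.} I then argue by noetherian induction on closed subspaces of $X$ using the localization triangle $j_!j^*\sF \to \sF \to i_*i^*\sF$. Since $i^*$ and $j^*$ commute with $f^*$ by base change, it is enough to find a dense open $U \subset X$ with $f^*$ conservative over $U$. By nil-invariance I may replace $X$ and $Y$ by their reductions, and shrink $X$ to an integral affine scheme. By generic flatness, after shrinking $X$ the map $f$ becomes finite locally free of constant degree $d$.

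\textbf{Degree of the generic map.} The degree $d$ equals the degree of the purely inseparable extension of function fields, so $d = p^n$ if the generic characteristic is $p > 0$, and $d = 1$ if it is $0$.
\begin{itemize}
  \item If $d = 1$, then $f$ is a finite locally free map of degree one, hence an isomorphism, and there is nothing to prove.
  \item If $d = p^n$ with $p$ not invertible in $\sO_X$, then by hypothesis $p$ is invertible in $\End_{\D(X)}(\un_X)$.
\end{itemize}
After shrinking further, $X$ lies over $\bF_p$ in the second case. I would then factor $f$, locally, as a composite of finite locally free radicial maps of degree $p$; alternatively, I would compare $f$ with a power of the relative Frobenius, which factors through $Y$.

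\textbf{Trace argument.} For a finite locally free map of degree $d$, I would use the trace (fundamental class) $f_*f^* \simeq f_!f^* \to f_!f^! \to \id$, which is available in a topological weave via the purity/Thom structure for syntomic maps. I then want to show that the composite $\id \to f_*f^* \to \id$ is multiplication by $d_\varepsilon$. This would be the analogue of \lemref{lem:fet chi field} for radicial rather than étale maps, presumably by the same deformation-to-the-normal-bundle computation at points. The class $d_\varepsilon$ is a unit up to multiplication by powers of $p$, e.g.\ $p_\varepsilon$ for $d = p$. Granting this, $p$ being invertible makes $\sF$ a retract of $f_*f^*\sF \simeq 0$, so $\sF \simeq 0$.

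The main obstacle is exactly this last step: constructing the trace for non-étale finite flat radicial maps and identifying the composite with (a unit multiple of) $p^n_\varepsilon$. If this proves awkward, I would first try the Frobenius route. There one computes the effect of the absolute Frobenius on $\un$ via its identification as a Thom-twisted degree $p$ map on affine space, reducing to $\bA^1 \to \bA^1$, $t \mapsto t^p$, whose degree in $\End(\un)$ is $p_\varepsilon$ by $\bA^1$-homotopy.
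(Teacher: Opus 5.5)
Your reductions to conservativity via \lemref{lem:frad}, to a dense open by localization, and to finite locally free $f$ of degree $p^n$ are reasonable. But the step you flag as the main obstacle is the whole content of the theorem, and the proposal only grants it. You never construct a trace $f_*f^* \to \id$ for a finite locally free radicial $f$. Such an $f$ is syntomic but not smooth, so this is not a purity isomorphism: you need a fundamental class, and above all a computation showing that $\un_X \to f_*\un_Y \to \un_X$ is a unit multiple of $(p^n)_\varepsilon$. \lemref{lem:fet chi field} is specific to étale maps and does not transfer, and the Frobenius route is only a sketch.

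There are also secondary problems. Your approximation step invokes continuity, and computing at points only controls invertibility over $X$ if $\D$ has conservativity of stalks. A general topological weave has neither; in this paper both are extra hypotheses. Moreover, ``$f^*\sF \simeq 0 \Rightarrow \sF \simeq 0$'' only transports along $X \to X_\alpha$ once it is rephrased as invertibility of $\un_X \to f_*\un_Y$. Likewise, invertibility of $p$ in $\End_{\D(X)}(\un_X)$ need not hold in $\End_{\D(X_\alpha)}(\un_{X_\alpha})$.

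The missing idea is to avoid computing in $\D$ at all. Let $\sQ$ be the set of primes invertible in $\End_{\D(X)}(\un_X)$. These act invertibly on $\D(X')$ for every $X'$ over $X$. Hence the morphism of weaves $\SH \to \D$ coming from the universality of $\SH$ factors through $\SH[\sQ^{-1}]$ over $X$. Morphisms of weaves commute with $f^*$ and $f_* \simeq f_!$. So the unit and counit of $(f^*, f_*)$ in $\D$ are images of those in $\SH[\sQ^{-1}]$; for the unit it suffices, by the projection formula, to treat $\un_X \to f_*\un_Y$. In $\SH[\sQ^{-1}]$ every prime is invertible either in $\sO_X$ or in the coefficients, so these maps are invertible by \cite[Thm.~2.1.1]{topinv}. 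This is the paper's proof.

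If you want a self-contained argument, your trace strategy would have to be run in $\SH[\sQ^{-1}]$, which does have continuity and conservativity of stalks, and then transported to $\D$. Even then, the degree computation for the radicial map is exactly the theorem of \cite{topinv}, which you would still have to prove.
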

\begin{proof}
  Let $\sQ$ be the set of primes invertible in $\End_{\D(X)}(\un_X)$.
  Every $q \in \sQ$ acts invertibly on $\un_X$, hence on $\D(X)$.
  Since $\SH$ is the universal topological weave, there exists a morphism of weaves $\SH \to \D$ which factors through $\SH[\sQ^{-1}]$ on algebraic spaces over $X$.
  Thus the invertibility of the unit and counit of the adjunction $(f^*, f_* \simeq f_!)$ follows from the case of $\SH[\sQ^{-1}]$, which is \cite[Thm.~2.1.1]{topinv}.
\end{proof}

\subsection{Bootstrapping}

Our main result in this section is as follows:

\begin{thm}\label{thm:descendability}
  Let $\D$ be a weave satisfying the localization property.
  Then the following conditions are equivalent:
  \begin{thmlist}
    \item\label{item:desc/blocks}
    Every finite étale surjection and every finite radicial surjection of qcqs algebraic spaces is descendable.

    \item\label{item:desc/all}
    Every finitely presented surjection $f : Y \twoheadrightarrow X$ of qcqs algebraic spaces is descendable.
  \end{thmlist}
\end{thm}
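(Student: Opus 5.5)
The implication (ii)$\Rightarrow$(i) is trivial, since finite étale and finite radicial surjections are finitely presented. For (i)$\Rightarrow$(ii) I would reduce a general finitely presented surjection $f : Y \twoheadrightarrow X$ to the two basic classes in three moves: noetherian induction via the localization property, local structure of quasi-finite maps, and descendable-locality (\corref{cor:descendable local}).

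\emph{Step 1: formal closure properties.} First I record what descendability of morphisms of spaces satisfies.
\begin{itemize}
  \item It is closed under composition and base change.
  \item If $g : Z \to Y$ and $f\circ g$ is descendable, then $f$ is descendable, by \lemref{lem:monotone index} applied to $f_*\un_Y \to (fg)_*\un_Z$. So it suffices to dominate $f$ by a descendable map factoring through it.
  \item Étale-type and proper-type covers can be used as the map $p$ in \corref{cor:descendable local}. Here I need the hypotheses of that corollary, namely projection formula and base change for $p_*$. These should hold for proper $p$ (where $p_* \simeq p_!$), and in particular for finite $p$, which is the case actually needed.
\end{itemize}

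\emph{Step 2: stratification via localization.} For a closed immersion $i : Z \to X$ with open complement $j : U \to X$, the localization triangle $j_!j^* \un \to \un \to i_*i^*\un$ should give the following: if $f_U$ and $f_Z$ are descendable of indices $\le N$ and $\le M$, then $f$ is descendable of index $\le N+M$. I would prove this with the fibre criterion \lemref{lem:index augmentation ideal}. The map $\varepsilon^N$ for $f$, restricted along $j^*$, becomes null; hence it factors through $i_*i^*$. Composing with $M$ further factors of $I$ then kills it using $f_Z$. Checking this carefully with tensor powers is the main technical point. It uses $i_*$ being monoidal-compatible, $i_*i^*(-)\simeq (-)\otimes i_*\un$, and the base change $i^*f_* \simeq f_{Z,*}i^*$ for closed immersions, which I expect to follow from localization.

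Since $X$ is qcqs and $f$ is finitely presented, I would reduce by noetherian approximation \cite{RydhApprox} to $X$ of finite presentation over $\Spec\Z$, where noetherian induction applies. It then suffices to find a dense open $U\subseteq X$ over which $f$ is descendable, with bounded index on finitely many strata.

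\emph{Step 3: the generic situation.} Over a generic point, $f$ admits a quasi-finite multisection: a locally closed $Y'\subseteq Y$ with $Y'\to X$ finite surjective over a dense open. By Step 1 it suffices to treat finite, finitely presented surjections. Shrinking $X$ further, a finite surjection becomes generically a composite of a finite radicial surjection and a finite étale surjection. One sees this by taking the separable closure of function fields, spreading out, and passing to reduced structures, which should be harmless since radicial nil-immersions are finite radicial surjections. For finite $g$, the pushforward $g_*$ satisfies (Pr2) and proper base change, so composition applies and (i) gives descendability over the open. Combined with Step 2 and noetherian induction, this yields (ii).

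I expect the main obstacle to be Step 2: gluing descendability across the open/closed decomposition with an index bound, using only the localization property. I would handle it with the augmentation-ideal criterion as sketched above.
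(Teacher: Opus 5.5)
\textbf{Gap: the noetherian approximation step.} The step that fails as written is the reduction ``by noetherian approximation'' in Step~2. There you would show that the descended map $f_\alpha : Y_\alpha \to X_\alpha$ is descendable, and then conclude the same for its base change $f$ along $u : X \to X_\alpha$. This transfer is not formal. Since $u^*$ is symmetric monoidal, $u^*f_{\alpha,*}\un$ is descendable. But the comparison $u^*f_{\alpha,*}\un \to f_*\un_Y$ is merely a map of $E_\infty$-algebras, and \lemref{lem:monotone index} moves descendability from the target of such a map to its source, not the other way.

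Since $f$ is not proper and only localization is assumed (no continuity), this exchange map need not be invertible. In fact descendability of non-proper maps is \emph{not} stable under base change. Work in $\SH$ over a field $k$, and let $i$ be the inclusion of the origin into $\mathbf{A}^1$. The open immersion $j : \mathbf{G}_m \hookrightarrow \mathbf{A}^1$ is descendable. Purity gives $i^*j_*\un \simeq \un \oplus \un(-1)[-1]$, so the thick tensor ideal generated by $j_*\un$ contains $j_*\un \otimes i_*\un \simeq i_*i^*j_*\un$. Hence it contains $i_*\un$, hence $i_*i^!\un \simeq i_*\un \otimes \un(-1)[-2]$, hence $\un$ by the localization triangle. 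Its base change $\initial \to \Spec(k)$, however, is not descendable. So the blanket ``closed under base change'' of your Step~1 is only safe when the exchange map is invertible (e.g.\ $f$ proper, or the base change smooth), and it cannot carry this reduction.

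The repair is to transport the geometric data instead of the conclusion. Noetherian induction on $X_\alpha$ yields a finite stratification by finitely presented locally closed $S$. Each $S$ comes with a finite radicial surjection $Y'_S \to W_S$, a finite étale surjection $W_S \to S$, and an $S$-map $Y'_S \to Y_\alpha \fibprod_{X_\alpha} S$. All of this is stable under base change, so it pulls back to $X$. There composition, \lemref{lem:monotone index} and your Step~2 apply directly. Equivalently, run the induction on the statement that $f_\alpha \fibprod_{X_\alpha} T$ is descendable for \emph{every} qcqs $T \to X_\alpha$.

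By the same token, the isomorphism $i^*f_* \simeq f_{Z,*}i^*$ you expect in Step~2 is false for non-proper $f$ (take $f = j$ above). It is also unnecessary: \lemref{lem:monotone index} applied to the algebra map $i^*f_*\un_Y \to f_{Z,*}\un_{Y_Z}$ goes in exactly the right direction to make $i^*\varepsilon_f^M$ null, which is how the paper argues.

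\textbf{Comparison with the paper.} Once repaired, your route genuinely differs from the paper's. The paper does no stratification of its own: it invokes EGA~$\mathrm{IV}_4$, 17.16.4 directly on the qcqs scheme $X$. Over each stratum, $f$ acquires a \emph{section} after a finite radicial and then a finite étale base change. So \lemref{lem:descendability section} plus two applications of \corref{cor:descendable local} give descendability on the strata, and \lemref{lem:descendability and localization} glues them. Algebraic spaces are handled via the Hall--Rydh splitting sequence.

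You instead \emph{dominate} $f$ on each stratum by a finite multisection factoring as radicial followed by étale. You then use closure under composition (valid in general, by lax monoidality of pushforward) together with \lemref{lem:monotone index}. This mechanism never needs the hypothesis of \corref{cor:descendable local}. That hypothesis is base change for $f_*$ along $p$, not proper base change for $p_*$ as your third bullet in Step~1 suggests. Along a finite radicial $p$ it must be deduced separately, e.g.\ from \propref{prop:proper descendable implies descent} and \lemref{lem:frad}.

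The price is that you must build the stratification yourself, including for algebraic spaces, which your proposal does not mention. Either use that a noetherian quasi-separated algebraic space has a dense open subscheme, so your generic step runs on schemes, or use the splitting sequence as the paper does.
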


\subsection{Proof of \thmref{thm:descendability}}

\begin{lem}\label{lem:descendability section}
	Let $f : Y \to X$ be a morphism admitting a section $s$.
	Then $f$ is descendable of index $\le 1$.
\end{lem}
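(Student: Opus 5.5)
By the remark following Definition~\ref{defn:desc index}, descendability of index $\le 1$ means that the unit $\eta : \un_X \to f_*(\un_Y)$ has a retraction in $\D(X)$. So the plan is to build that retraction directly out of the section $s : X \to Y$, using only the functoriality of $*$-pushforward and the units of the adjunctions $(f^*, f_*)$ and $(s^*, s_*)$.

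Since $f \circ s = \id_X$, we have $f_* s_* \simeq (f s)_* \simeq \id$. Apply $f_*$ to the unit $\un_Y \to s_*s^*(\un_Y) \simeq s_*(\un_X)$. This gives a morphism
\[ r : f_*(\un_Y) \to f_*s_*(\un_X) \simeq \un_X . \]
Equivalently, $r$ is the morphism of $E_\infty$-algebras induced by pulling back along $s$, via the lax monoidal structure of the pushforwards. We will show that $r \circ \eta \simeq \id_{\un_X}$.

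For this, use the naturality of units along the composite $f \circ s$. The unit of the composite adjunction $(s^*f^*, f_*s_*)$ is the composite
\[ \id \to f_*f^* \to f_*s_*s^*f^* . \]
Under the identifications $f_*s_* \simeq \id$ and $s^*f^* \simeq \id$, it is the unit of the identity adjunction, namely the identity. Evaluating at $\un_X$, the composite is exactly $r \circ \eta$, where we use $f^*\un_X \simeq \un_Y$ and $s^*\un_Y \simeq \un_X$. Hence $r\circ\eta \simeq \id$.

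The only point requiring care is the compatibility of the pseudofunctoriality isomorphisms $(fs)_* \simeq f_*s_*$ and $(fs)^*\simeq s^*f^*$ with the units, i.e.\ that the unit of a composite adjunction is the composite of the units. This is a standard fact about composing adjunctions in a weave, so I do not expect it to be a real obstacle.

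Alternatively, one can argue via Lemma~\ref{lem:index augmentation ideal} with $N=1$: the retraction $r$ shows that $\varepsilon : I \to \un_X$ is null-homotopic, since $\varepsilon$ is split by the cofibre sequence $I \to \un_X \to f_*\un_Y$.
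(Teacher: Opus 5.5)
Your proof is correct and is essentially the paper's argument: the paper also obtains the retraction by applying $f_*$ to the unit $\un_Y \to s_*(\un_X)$ and using $f_*s_* \simeq \id$. Your check that $r \circ \eta \simeq \id$, via the unit of the composite adjunction $(s^*f^*, f_*s_*)$, spells out a step the paper leaves implicit.
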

\begin{proof}
	The unit map of $s_*(\un_X)$ induces a retraction $f_*\un_Y \to f_*s_* \un_X \simeq \un_X$ of the unit of $f_*(\un_Y)$.
\end{proof}

\begin{lem}\label{lem:descendability and localization}
  Let $\D$ be a weave satisfying the localization property and $f : Y \to X$ a morphism.
  Given a closed immersion $i : Z \hook X$ with open complement $j : U \hook X$, let $f_Z$ and $f_U$ denote the base changes of $f$ to $Z$ and $U$.
  If $f_Z$ and $f_U$ are descendable of index $\le N_Z$ and $\le N_U$, respectively, then $f$ is descendable of index $\le N_Z+N_U$.
\end{lem}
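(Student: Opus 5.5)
We work with the criterion of \lemref{lem:index augmentation ideal}. Let $A = f_*\un_Y$ and let $I \to \un_X$ be the fibre of the unit $\un_X \to A$, with $\varepsilon : I \to \un_X$. The aim is to show that $\varepsilon^{N_Z+N_U} : I^{\otimes(N_Z+N_U)} \to \un_X$ is null-homotopic. The idea is to split $\varepsilon^{N_Z+N_U}$ as $\varepsilon^{N_U}\otimes\varepsilon^{N_Z}$. We then show that the first factor restricts to zero on $U$ and the second to zero on $Z$, and use the localization triangle to conclude.

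\emph{Step 1: restriction to the strata.} For $j$ we use $j^* f_* \simeq f_{U,*} j^*$, which holds by smooth (étale) base change. Since $j^*$ is symmetric monoidal and exact, it carries $I$ to the fibre $I_U$ of $\un_U \to f_{U,*}\un_{Y_U}$ and carries $\varepsilon^{N_U}$ to $\varepsilon_U^{N_U}$. The latter is null by hypothesis and \lemref{lem:index augmentation ideal}.

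For $i$ the base change $i^* f_* \to f_{Z,*} i^*$ need not be invertible, since $f$ is not assumed proper. To get around this, we use the canonical map of $E_\infty$-algebras $i^*f_*\un_Y \to f_{Z,*}\un_{Y_Z}$. On fibres this gives a map $i^*I \to I_Z$ compatible with the maps to $\un_Z$. As in the proof of \lemref{lem:monotone index}, $i^*\varepsilon^{N_Z}$ then factors through $\varepsilon_Z^{N_Z} \simeq 0$. So $i^*(\varepsilon^{N_Z}) \simeq 0$.

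\emph{Step 2: gluing.} Set $a = \varepsilon^{N_U} : P \to \un_X$ with $j^*a \simeq 0$, and $b = \varepsilon^{N_Z} : Q \to \un_X$ with $i^*b \simeq 0$. We want $a\otimes b \simeq 0$. By localization we have the exact triangle $j_!j^*\un_X \to \un_X \to i_*i^*\un_X$.

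Since $i^*b \simeq 0$, the composite $Q \to \un_X \to i_*\un_Z$ is adjoint to $i^*b$ and so vanishes. Hence $b$ lifts to some $\tilde b : Q \to j_!\un_U$. Then $a\otimes b$ factors as
\[ P\otimes Q \xrightarrow{a\otimes \tilde b} \un_X \otimes j_!\un_U \simeq j_!\un_U \to \un_X, \]
and by the projection formula for $j_!$ the map $a\otimes\tilde b$ factors through $a \otimes \id_{j_!\un_U}$. Under the identification $(-)\otimes j_!\un_U \simeq j_!j^*(-)$, which is again the projection formula, the map $a\otimes \id_{j_!\un_U}$ becomes $j_!j^*(a) \simeq 0$. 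Hence $a\otimes b \simeq 0$.

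Finally, $(\varepsilon^{N_U})\otimes(\varepsilon^{N_Z}) \simeq \varepsilon^{N_U+N_Z}$ up to the canonical identification $I^{\otimes N_U}\otimes I^{\otimes N_Z}\simeq I^{\otimes(N_U+N_Z)}$, so we are done.

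\emph{Main obstacle.} The main subtlety is in Step 1 for the closed piece: it is tempting to demand $i^*f_*\simeq f_{Z,*}i^*$, and we must instead use the monotonicity trick with the exchange map. This requires checking that the exchange map is a map of $E_\infty$-algebras under $\un_Z$, which holds because it is the lax monoidal exchange transformation. The gluing in Step 2 is formal once the projection formula $j_!(-)\otimes(-) \simeq j_!(-\otimes j^*(-))$ is available in the weave.
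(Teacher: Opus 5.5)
Your proof is correct and follows the same strategy as the paper. You write $\varepsilon^{N_Z+N_U}$ as a tensor product of two factors: one whose restriction $j^*$ vanishes (by smooth base change), and one whose restriction $i^*$ vanishes (by \lemref{lem:monotone index} applied to the exchange map $i^*f_*\un_Y \to f_{Z,*}\un_{Y_Z}$). You then glue using a localization triangle and a projection formula.

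The only difference is a dual choice of triangle. You use $j_!j^*\un_X \to \un_X \to i_*i^*\un_X$ to lift the $Z$-factor into $j_!\un_U$, and then kill it via $j_!j^*(\varepsilon^{N_U}) \simeq 0$. The paper uses $i_*i^!\un_X \to \un_X \to j_*\un_U$ to lift the $U$-factor into $i_*i^!\un_X$, and then kills it via $i_*(i^*\varepsilon^{N_Z} \otimes \id)$.
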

\begin{proof}
  By smooth base change, we have $j^*\sI_f \simeq \sI_{f_U}$.
  Consider the localization triangle $i_*i^!\un_X \to \un_X \to j_*\un_U$.
  The morphism $\varepsilon_f^{N_U}$ factors as
  \[
    \varepsilon_f^{N_U} : \sI_f^{\otimes N_U} \xrightarrow{\phi_f} i_*i^!\un_X \to \un_X.
  \]
  Indeed, its composite with the unit $\un_X \to j_*\un_U$ factors through $j_*$ of $j^* \varepsilon_f^{N_U} \simeq \varepsilon_{f_U}^{N_U} \simeq 0$, hence is null-homotopic.
  Thus the map $\varepsilon_f^{N_U+N_Z}$ factors through
  \[
    \sI_f^{\otimes N_Z+N_U}
    \simeq \sI_f^{\otimes N_Z} \otimes \sI_f^{\otimes N_U}
    \xrightarrow{\id \otimes \phi_f} \sI_f^{\otimes N_Z} \otimes i_*i^!(\un_X)
    \simeq i_*(i^* \sI_{f}^{\otimes N_Z} \otimes i^! \un_X)
  \]
  and
  \[
    i_*(i^*\sI_{f}^{\otimes N_Z} \otimes i^!\un_X)
    \xrightarrow{i_*(i^*\varepsilon_{f}^{N_Z} \otimes \id)} i_*(i^*\un_X \otimes i^!\un_X)
    \simeq i_*i^!(\un_X)
    \xrightarrow{\mrm{counit}} \un_X.
  \]
  Since $f_Z$ is descendable of index $\le N_Z$, applying \lemref{lem:monotone index} to the morphism of algebras $\mrm{Ex}^*_* : i^*f_* \un_Y \to f_{Z,*} \un_{Y_Z}$ yields that $i^* \varepsilon_{f}^{N_Z}$ is null-homotopic.
  The claim follows.
\end{proof}

\begin{proof}[Proof of \thmref{thm:descendability}]
  Since finite étale and finite radicial surjections are in particular finitely presented surjections, \itemref{item:desc/all} implies \itemref{item:desc/blocks}.
  Conversely, assume \itemref{item:desc/blocks} and let $f : Y \twoheadrightarrow X$ be a finitely presented surjection.
  Suppose first that $X$ is a qcqs scheme.
  Then there exists by \cite[IV\textsubscript{4}, Prop.~17.16.4]{EGA}\footnote{%
    stated for morphisms of schemes, but we may apply it to the composite $Y' \twoheadrightarrow Y \twoheadrightarrow X$ where $Y' \twoheadrightarrow Y$ is an étale surjection from a scheme
  } a stratification by locally closed subschemes $X_\alpha$ for which there are finite radicial and finite étale surjections, $g_\alpha : X'_\alpha \to X_\alpha$ and $h_\alpha : X''_\alpha \to X'_\alpha$, respectively, such that $f$ admits a section after base change along each $X''_\alpha \to X$.

  By \lemref{lem:descendability section}, each $f \times_X X''_\alpha$ is descendable.
  Since $g_\alpha$ and $h_\alpha$ are descendable by assumption, \corref{cor:descendable local}, applied first along $h_\alpha$ and then along $g_\alpha$, implies that each $f \times_X X_\alpha$ is descendable.
  Refining the stratification $(X_\alpha)_\alpha$ if necessary, we may choose a filtration by closed subsets $\initial = Z_0 \sub Z_1 \sub \cdots \sub Z_n = X$ such that each $Z_k \setminus Z_{k-1}$ is a disjoint union of strata $X_\alpha$.
  Since the weave is radditive, $f \times_X (Z_k\setminus Z_{k-1})$ is descendable, and applying \lemref{lem:descendability and localization} recursively shows that $f$ is descendable.

  For a general algebraic space $X$, there exists by \cite[Ex.~3.2, Prop.~3.3]{HallRydhAddendum} a Nisnevich covering $p : X' \twoheadrightarrow X$ with $X'$ a qcqs scheme and a monomorphic splitting sequence $\initial = V_0 \sub V_1 \sub \cdots \sub V_m = X$ where each $V_j$ is a quasi-compact open and $Z_j := (V_j \setminus V_{j-1})_{\red}$ are schemes.
  By the case above, each base change $f \times_X Z_j$ is descendable.
  Applying \lemref{lem:descendability and localization} recursively to the closed subset $Z_j \sub V_j$ with open complement $V_{j-1}$, we deduce that $f$ is descendable.
\end{proof}

\begin{rem}\label{rem:descendability index bound}
  In the context of \thmref{thm:descendability}, suppose additionally that the finite étale and finite radicial surjections in \itemref{item:desc/blocks} are descendable \emph{of index $\le 1$}.
  In this case the proof yields the following uniform bound:
  \begin{enumerate}
    \item[$(\ast)$]
    For every qcqs algebraic space $X$ of finite Krull dimension there exists a natural number $N_X$ (depending only on $X$) such that every finitely presented surjection $f : Y \twoheadrightarrow X$ is descendable of index $\le N_X$.
  \end{enumerate}
  If moreover $X$ is a qcqs scheme of Krull dimension $d_X$, one may choose the filtration $(Z_k)_k$ appearing in the proof to be the filtration by dimension (i.e., $Z_k$ is the union of the strata of dimension $\le k$), yielding the bound $N_X = d_X + 1$.
\end{rem}

\section{Descent}

\subsection{Abstract descent criteria}
\label{ssec:abstract}

Throughout this subsection, $\D$ is a \emph{radditive} preweave, i.e., for every finite family $(X_i)_i$ of algebraic spaces the canonical morphism $\D(\fibcoprod_i X_i) \to \prod_i \D(X_i)$ is invertible (\cite[\S2.3]{weavelisse}).
Given a collection of morphisms $(f_\alpha : Y_\alpha \to X)_\alpha$ of algebraic spaces, we say that a presheaf $F$ is \emph{separated} with respect to $(f_\alpha)_\alpha$ if the functors $f_\alpha^*$ are jointly conservative, and we use the notion of \v{C}ech descent along $(f_\alpha)_\alpha$ from \cite[\S2.3]{weavelisse}.

We record the following variants of \cite[Lem.~2.3.3]{weavelisse} for morphisms admitting $*$- and $!$-direct image.

\begin{lem}\label{lem:abstr*prop}
  Let $\D$ be a radditive left preweave.
  Let $(f_\alpha : Y_\alpha \to X)_\alpha$ be a collection of morphisms such that $\D$ admits $*$-direct image for every morphism obtained by base change and composition from morphisms in $(f_\alpha)_\alpha$.
  Given a subset of indices $\alpha_1,\ldots,\alpha_n$, we write
  $$f_{\alpha_1,\ldots,\alpha_n} : Y_{\alpha_1,\ldots,\alpha_n} := Y_{\alpha_1}\fibprod_X \cdots \fibprod_X Y_{\alpha_n} \to X.$$
  Consider the following conditions:
  \begin{thmlist}
    \item\label{item:abstr*prop/cat}
    The presheaf $\D^*$ satisfies \v{C}ech descent along $(f_\alpha : Y_\alpha \to X)_\alpha$.

    \item\label{item:abstr*prop/sheaf}
    For every $\sF \in \D(X)$ the following is a limit diagram in $\D(X)$:
    \[
      \sF
      \to \prod_\alpha f_{\alpha,*}f_\alpha^*(\sF)
      \rightrightarrows \prod_{\alpha,\beta} f_{\alpha,\beta,*}f_{\alpha,\beta}^*(\sF)
      \rightrightrightarrows
      \cdots
    \]

    \item\label{item:abstr*prop/cons}
    The presheaf $\D^*$ is separated with respect to $(f_\alpha : Y_\alpha \to X)_\alpha$.
  \end{thmlist}
  Then \itemref{item:abstr*prop/cat} $\Rightarrow$ \itemref{item:abstr*prop/sheaf} $\Rightarrow$ \itemref{item:abstr*prop/cons}.
  If $f_\alpha^*$ preserves totalizations of $f_\alpha^*$-split cosimplicial diagrams for each $\alpha$, then all three listed conditions are equivalent.
\end{lem}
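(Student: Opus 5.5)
We argue by unwinding the definition of \v{C}ech descent for $\D^*$ via the adjunctions $f_\alpha^* \dashv f_{\alpha,*}$, following the pattern of \cite[Lem.~2.3.3]{weavelisse}. Write $Y_\bullet \to X$ for the \v{C}ech nerve of the family: radditivity lets us replace the family by the single morphism $f : Y := \fibcoprod_\alpha Y_\alpha \to X$. Indeed, the terms $\D(Y_n) \simeq \prod_{\alpha_0,\ldots,\alpha_n} \D(Y_{\alpha_0,\ldots,\alpha_n})$ of the cosimplicial diagram split as products. Likewise the products $\prod f_{\alpha_0,\ldots,\alpha_n,*}f^*_{\alpha_0,\ldots,\alpha_n}$ in \itemref{item:abstr*prop/sheaf} are $f_{n,*}f_n^*$ for $f_n : Y_n \to X$.

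For \itemref{item:abstr*prop/cat} $\Rightarrow$ \itemref{item:abstr*prop/sheaf}, suppose $f^* : \D(X) \to \Tot \D(Y_\bullet)$ is an equivalence. Since $*$-direct images exist along all the $f_n$, the functor $f^*$ into the limit has the right adjoint
\[ (\sG_n)_n \mapsto \Tot\bigl(f_{n,*}\sG_n\bigr), \]
computed termwise in $\D(X)$. This is the standard description of right adjoints into a limit of \inftyCats whose transition functors admit right adjoints; it uses the Beck--Chevalley-free formula because we only compose $*$-pushforwards along the structure maps. The unit of this adjunction at $\sF$ is exactly $\sF \to \Tot(f_{\bullet,*}f_\bullet^*\sF)$, and it is invertible because $f^*$ is an equivalence.

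For \itemref{item:abstr*prop/sheaf} $\Rightarrow$ \itemref{item:abstr*prop/cons}, let $\phi$ be a morphism with $f_\alpha^*\phi$ invertible for all $\alpha$. Then $f_n^*\phi$ is invertible for all $n$, since each $f_n$ factors through some $f_\alpha$. Hence $\Tot(f_{\bullet,*}f_\bullet^*\phi)$ is invertible, and so is $\phi$ by the limit diagram.

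For the converse under the splitting hypothesis, we use Lurie's Beck--Chevalley/monadicity criterion \cite[Cor.~4.7.5.3]{LurieHA}. We show that $f^* : \D(X) \to \Tot\D(Y_\bullet)$ is an equivalence by verifying comonadicity of $f^* \dashv f_*$ together with the Beck--Chevalley condition on the \v{C}ech nerve.
\begin{itemize}
\item Comonadicity (Barr--Beck--Lurie) needs two inputs. The first is conservativity of $f^*$, which is \itemref{item:abstr*prop/cons}, using radditivity to identify $f^* = (f_\alpha^*)_\alpha$. The second is that $f^*$ preserves totalizations of $f^*$-split cosimplicial objects, which is the added hypothesis applied componentwise.
\item The Beck--Chevalley condition is the invertibility of $\mrm{Ex}^*_*$ for the squares in the \v{C}ech nerve, i.e.\ base change $p^*f_* \simeq f'_*p'^*$ along the projections $Y_{n+1} \to Y_n$.
\end{itemize}

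This last point is the main obstacle. In a weave it holds when the relevant morphisms satisfy $*$-base change, e.g.\ proper or smooth morphisms. In the abstract left preweave setting, I would either read it off from the hypothesis that $\D$ admits $*$-direct image for all base changes and composites (if that notion, as in \cite{weavelisse}, includes base change compatibility), or else include it implicitly as part of "admitting $*$-direct image". Granting it, \cite[Cor.~4.7.5.3]{LurieHA} gives \itemref{item:abstr*prop/cat}, closing the cycle.
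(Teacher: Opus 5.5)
Your proposal is correct and follows essentially the same route as the paper. Both arguments identify (i), (ii), (iii) with, respectively, $F^*$ being an equivalence, the unit of the adjunction $(F^*, F_*)$ being invertible, and $F^*$ being conservative, and then apply (the dual of) \cite[Cor.~4.7.5.3]{LurieHA}. The Beck--Chevalley condition you flag as the main obstacle is exactly what the paper extracts from the hypothesis that $\D$ admits $*$-direct image, which includes compatibility of $f_{\alpha,*}$ with $*$-inverse image.

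One small correction: radditivity only covers finite families, so you cannot in general pass to $\fibcoprod_\alpha Y_\alpha$. Instead, work directly with $\Tot\bigl(\prod_{\alpha_0,\ldots,\alpha_\bullet}\D(Y_{\alpha_0,\ldots,\alpha_\bullet})\bigr)$, as the paper does; this changes nothing else in your argument.
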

\begin{proof}
  We consider the adjoint functors
  \begin{equation}\label{eq:F^*proper}
    F^* : \D(X) \rightleftarrows \Tot\Big(\prod_{\alpha_1,\ldots,\alpha_\bullet} \D(Y_{\alpha_1,\ldots,\alpha_\bullet})\Big) : F_*
  \end{equation}
  as in the proof of \cite[Lem.~2.3.3]{weavelisse}.
  Note that \itemref{item:abstr*prop/cat} holds if and only if $F^*$ is an equivalence, \itemref{item:abstr*prop/sheaf} holds if and only if the unit $\id \to F_* F^*$ is invertible, and \itemref{item:abstr*prop/cons} holds if and only if $F^*$ is conservative.
  It is thus clear that \itemref{item:abstr*prop/cat} $\Rightarrow$ \itemref{item:abstr*prop/sheaf} $\Rightarrow$ \itemref{item:abstr*prop/cons}.

  For the converse direction, we apply (the dual of) \cite[Cor.~4.7.5.3]{LurieHA}.
  Condition (1) holds by the assumption on $f_\alpha^*$, and condition (2) holds because each $f_{\alpha,*}$ commutes with $*$-inverse image (since $\D$ admits $*$-direct image for $f_\alpha$), and conservativity of $\D(X) \to \prod_\alpha \D(Y_\alpha)$ is equivalent to condition~\itemref{item:abstr*prop/cons}.
  The conclusion is that \itemref{item:abstr*prop/cons} implies that $F^*$ is an equivalence, i.e. \itemref{item:abstr*prop/cons} $\Rightarrow$ \itemref{item:abstr*prop/cat}.
\end{proof}

\begin{lem}\label{lem:abstr!prop}
  Let $\D$ be a radditive weave.
  Let $(f_\alpha : Y_\alpha \to X)_\alpha$ be a collection of morphisms.
  Given a subset of indices $\alpha_1,\ldots,\alpha_n$, we write
  $$f_{\alpha_1,\ldots,\alpha_n} : Y_{\alpha_1,\ldots,\alpha_n} := Y_{\alpha_1}\fibprod_X \cdots \fibprod_X Y_{\alpha_n} \to X.$$
  Consider the following conditions:
  \begin{thmlist}
    \item\label{item:abstr!prop/cat}
    The presheaf $\D^!$ satisfies \v{C}ech descent along $(f_\alpha : Y_\alpha \to X)_\alpha$.

    \item\label{item:abstr!prop/sheaf}
    For every $\sF \in \D(X)$ the following is a colimit diagram in $\D(X)$:
    \[
      \cdots
      \rightrightrightarrows \bigoplus_{\alpha,\beta} f_{\alpha,\beta,!}f_{\alpha,\beta}^!(\sF)
      \rightrightarrows \bigoplus_\alpha f_{\alpha,!}f_\alpha^!(\sF)
      \to \sF.
    \]

    \item\label{item:abstr!prop/cons}
    The presheaf $\D^!$ is separated with respect to $(f_\alpha : Y_\alpha \to X)_\alpha$.
  \end{thmlist}
  Then \itemref{item:abstr!prop/cat} $\Rightarrow$ \itemref{item:abstr!prop/sheaf} $\Rightarrow$ \itemref{item:abstr!prop/cons}.
  If the functor $f_\alpha^!$ preserves geometric realizations of $f_\alpha^!$-split simplicial diagrams for each $\alpha$, then all three listed conditions are equivalent.
\end{lem}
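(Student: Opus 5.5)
The plan is to dualize the proof of \lemref{lem:abstr*prop}, replacing $*$-inverse images by $!$-inverse images, limits by colimits, and right adjoints by left adjoints. Since $\D$ is a weave, every $f_\alpha^!$ has a left adjoint $f_{\alpha,!}$. The presheaf $\D^!$ sends $X$ to $\D(X)$ with transition functors $(-)^!$. Using radditivity, we identify $\D(\fibcoprod_{\alpha_1,\ldots,\alpha_n} Y_{\alpha_1,\ldots,\alpha_n}) \simeq \prod \D(Y_{\alpha_1,\ldots,\alpha_n})$; this is how direct sums over index tuples enter. We then form the adjunction
\[
  F_! : \Tot\Big(\prod_{\alpha_1,\ldots,\alpha_\bullet} \D(Y_{\alpha_1,\ldots,\alpha_\bullet})\Big) \rightleftarrows \D(X) : F^!,
\]
where $F^!$ is the canonical functor induced by the $!$-pullbacks, as in \cite[Lem.~2.3.3]{weavelisse}. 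Its left adjoint $F_!$ exists because each $f^!$ in the cosimplicial diagram has a left adjoint $f_!$. This left adjoint satisfies the Beck--Chevalley condition by base change $f^! g_! \simeq g'_! f'^!$, which holds for arbitrary morphisms in a weave.

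The first step is to translate each condition into a statement about this adjunction. Condition \itemref{item:abstr!prop/cat} says that $F^!$ is an equivalence, and \itemref{item:abstr!prop/cons} says that $F^!$ is conservative. For \itemref{item:abstr!prop/sheaf}, we compute the counit $F_!F^!(\sF) \to \sF$. Using base change, we identify $F_!F^!(\sF)$ with the geometric realization of the simplicial object $[n] \mapsto \bigoplus f_{\alpha_0,\ldots,\alpha_n,!}f^!_{\alpha_0,\ldots,\alpha_n}(\sF)$. Hence \itemref{item:abstr!prop/sheaf} holds exactly when the counit is invertible. The identification of $F_!$ on objects of the totalization with this bar-type colimit is dual to the corresponding computation of $F_*F^*$ in \lemref{lem:abstr*prop}. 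It is the one step needing care: I would use the standard formula for the left adjoint of the limit functor into a totalization of categories whose transition functors are right adjoints (\cite[\S4.7.5]{LurieHA}).

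Given these identifications, \itemref{item:abstr!prop/cat} $\Rightarrow$ \itemref{item:abstr!prop/sheaf} is immediate, because an equivalence has invertible counit. For \itemref{item:abstr!prop/sheaf} $\Rightarrow$ \itemref{item:abstr!prop/cons}, suppose $\phi$ is a morphism with $F^!(\phi)$ invertible. Then $F_!F^!(\phi)$ is invertible, and by naturality of the counit, which is invertible by \itemref{item:abstr!prop/sheaf}, so is $\phi$.

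For the converse under the hypothesis on $f_\alpha^!$, I would apply \cite[Cor.~4.7.5.3]{LurieHA} directly; its form with left adjoints and geometric realizations is the one needed here. Its two hypotheses are as follows. First, each $f^!_\alpha$ preserves geometric realizations of $f^!_\alpha$-split simplicial objects, which is exactly our assumption; via radditivity this gives the statement for $\prod_\alpha f_\alpha^!$. Second, the left adjointable squares condition holds, by the base change isomorphism $f^!_\beta f_{\alpha,!} \simeq f'_{\alpha,!} f'^!_\beta$ along the fibre products $Y_{\alpha,\beta}$. The corollary then says that conservativity of $\D(X) \to \prod_\alpha \D(Y_\alpha)$ implies that $F^!$ is an equivalence, giving \itemref{item:abstr!prop/cons} $\Rightarrow$ \itemref{item:abstr!prop/cat}.
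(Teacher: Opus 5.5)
Your route is the paper's: you identify (i), (ii), (iii) with $F^!$ being an equivalence, with the counit $F_!F^!\to\id$ being invertible, and with $F^!$ being conservative, and then apply the dual of \cite[Cor.~4.7.5.3]{LurieHA}. The forward implications are fine. The identification of $F_!F^!\sF$ with the bar construction needs no base change; it follows levelwise from $f_{n,!}\dashv f_n^!$.

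The gap is your verification of the adjointability hypothesis. It rests on a base change isomorphism $g^!f_!\simeq f'_!{g'}^!$ that you say holds ``for arbitrary morphisms in a weave''. No such formula holds. A weave gives $g^*f_!\simeq f'_!{g'}^*$ and, by passing to right adjoints, $g^!f_*\simeq f'_*{g'}^!$. Here is a counterexample to your formula. Take a closed immersion $i : Z\hookrightarrow X$ with open complement $j : U\hookrightarrow X$. Apply $i^!$ to $j_!\sF\to j_*\sF\to i_*i^*j_*\sF$ and use $i^!j_*\simeq 0$; this gives $i^!j_!\sF\simeq i^*j_*\sF[-1]$. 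In $\SH$ with $X=\mathbf{A}^1_k$, $Z=\{0\}$, $\sF=\un_U$ this is nonzero, although $Z\fibprod_X U=\emptyset$.

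Nor can this step be rescued by a different argument: the converse as stated is false. Lurie's criterion needs $\alpha^!\,\mathrm{pr}_!\simeq\mathrm{pr}'_!\,{\alpha'}^!$ for all structure maps $\alpha$ of the augmented \v{C}ech nerve. At the augmentation this reads $f^!f_!\simeq\mathrm{pr}_{2,!}\mathrm{pr}_1^!$, and it can fail. Take the family $(j,i)$ in $\D=\SH$.
\begin{itemize}
\item $j^!=j^*$ and $i^!$ preserve colimits and are jointly conservative by localization. So (iii) and the hypothesis on split diagrams both hold.
\item All fibre products are $U$, $Z$ or empty. Hence the descent category is $\D(U)\times\D(Z)$.
\item The augmentation condition becomes $i^!j_!\simeq 0$, which fails by the computation above.
\item Consequently $F^!=(j^*,i^!)$ is not fully faithful. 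On $\End(j_!\un_U)\simeq\End(\un_U)$ it is the map to $\End(\un_U)\times\End(i^!j_!\un_U)$. Its first component is an isomorphism, while $i^!j_!\un_U\neq 0$.
\end{itemize}

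So (iii)$\Rightarrow$(i) needs an extra Beck--Chevalley hypothesis: $!$-direct image along base changes of the $f_\alpha$ must commute with $!$-inverse image along the maps of the nerve. This holds when the $f_\alpha$ are proper, since then $\mathrm{pr}_!\simeq\mathrm{pr}_*$ and $g^!\mathrm{pr}_*\simeq\mathrm{pr}'_*{g'}^!$. That is the only case used in \propref{prop:proper descendable implies descent}.

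The paper's own proof has the same gap at this step. It only invokes commutation of $f_{\alpha,!}$ with $*$-inverse image, which is true but is not the condition required for the $!$-diagram. Your write-up names the right condition but wrongly asserts that it holds in general.
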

\begin{proof}
  The proof is similar, using the functor
  \begin{equation}\label{eq:F^!}
    F^! : \D(X) \to \Tot\Big(\prod_{\alpha_1,\ldots,\alpha_\bullet} \D(Y_{\alpha_1,\ldots,\alpha_\bullet})\Big)
  \end{equation}
  and its left adjoint $F_!$.
  It is clear that \itemref{item:abstr!prop/cat} holds if and only if $F^!$ is an equivalence, \itemref{item:abstr!prop/sheaf} holds if and only if the counit $F_! F^! \to \id$ is invertible, and \itemref{item:abstr!prop/cons} holds if and only if $F^!$ is conservative.
  It is thus clear that \itemref{item:abstr!prop/cat} $\Rightarrow$ \itemref{item:abstr!prop/sheaf} $\Rightarrow$ \itemref{item:abstr!prop/cons}.

  For the converse direction, we again apply \cite[Cor.~4.7.5.3]{LurieHA}.
  Condition (1) holds by the assumption on $f_\alpha^!$, and condition (2) holds because each $f_{\alpha,!}$ commutes with $*$-inverse image (by the base change formula), and conservativity of $\D(X) \to \prod_\alpha \D(Y_\alpha)$ is equivalent to condition~\itemref{item:abstr!prop/cons}.
  The conclusion is that \itemref{item:abstr!prop/cons} implies that $F^!$ is an equivalence, i.e. \itemref{item:abstr!prop/cons} $\Rightarrow$ \itemref{item:abstr!prop/cat}.
\end{proof}

We also record the following simple observation:

\begin{lem}\label{lem:desctarg}
  Let $\D$ be a radditive weave.
  Suppose given a morphism $g : X' \to X$ such that $\D^*$, resp. $\D^!$, satisfies \v{C}ech descent along any base change of $g$.
  Then $\D^*$, resp. $\D^!$, satisfies \v{C}ech descent along any base change of a family $(f_\alpha : Y_\alpha \to X)_\alpha$ if and only if it satisfies \v{C}ech descent along any base change of the induced family
  \[
    (Y_\alpha \fibprod_X X' \to X')_\alpha.
  \]
\end{lem}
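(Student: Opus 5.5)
The plan is a standard cofinality argument with Čech nerves, treating the two directions separately.

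Write $f'_\alpha : Y'_\alpha := Y_\alpha \fibprod_X X' \to X'$. Given a base change of the family along some $T \to X$, we may replace $X$ by $T$ and $X'$ by $X' \fibprod_X T$; both hypotheses are stable under base change. So it suffices to prove, under the standing hypothesis on $g$, that $\D^*$ (resp. $\D^!$) satisfies Čech descent along $(f_\alpha)_\alpha$ if and only if it does so along every base change of $(f'_\alpha)_\alpha$. Since the family $(f'_\alpha)_\alpha$ is itself a base change of $(f_\alpha)_\alpha$, the "only if" direction is immediate. The content lies in the converse.

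For the converse, assume descent along all base changes of $(f'_\alpha)_\alpha$. Form the bisimplicial diagram
\[ D^{m,n} := \D\bigl( Y_{\alpha_1,\ldots,\alpha_m} \fibprod_X X'^{\fibprod_X n} \bigr), \]
where the first index runs over the Čech nerve of $(f_\alpha)_\alpha$ (with the product over tuples of indices, using radditivity to pass between products of categories and disjoint unions) and the second over the Čech nerve of $g$. The transition maps are $*$-pullbacks (resp. $!$-pullbacks).

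We then compute the totalization $\lim_{m,n} D^{m,n}$ in two orders.

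1. Fix $m$, including the augmented term $m=-1$, which corresponds to $X$ itself. Then $n \mapsto D^{m,n}$ is the Čech nerve of the base change of $g$ to $Y_{\alpha_1,\ldots,\alpha_m}$. By hypothesis it is a limit diagram, so $\D(Y_{\alpha_1,\ldots,\alpha_m}) \simeq \lim_n D^{m,n}$, and likewise $\D(X) \simeq \lim_n D^{-1,n}$.
2. Fix $n \ge 0$. Then $m \mapsto D^{m,n}$ is the Čech nerve of the base change of $(f_\alpha)_\alpha$ along $X'^{\fibprod_X n} \to X'$. Since $X'^{\fibprod_X n}$ is an $X'$-space, this family is a base change of $(f'_\alpha)_\alpha$, so by assumption $D^{-1,n} \simeq \lim_m D^{m,n}$.

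Commuting the limits then gives
\[ \D(X) \simeq \lim_n D^{-1,n} \simeq \lim_n \lim_m D^{m,n} \simeq \lim_m \lim_n D^{m,n} \simeq \lim_m \D(Y_{\alpha_1,\ldots,\alpha_m}). \]
This is exactly Čech descent along $(f_\alpha)_\alpha$.

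The main obstacle is bookkeeping rather than mathematics. One must check that the composite equivalence is the canonical comparison functor, which follows from naturality of the augmentations in both directions. One must also check that the products over index tuples are compatible with the base-change identifications; this is what radditivity is used for.
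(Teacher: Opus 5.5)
Your proposal is correct and follows the paper's own proof, which states only that necessity is obvious and that sufficiency holds because limits commute with limits; your bisimplicial diagram $D^{m,n}$ is the explicit form of that interchange. Two cosmetic slips need fixing: the degree-$n$ term should have $n+1$ factors of $X'$ (so that it lies over $X'$ for every $n \ge 0$), and in your reduced formulation the ``only if'' direction must keep the hypothesis of descent along all base changes of $(f_\alpha)_\alpha$, since descent along $(f_\alpha)_\alpha$ alone would not suffice.
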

\begin{proof}
  The condition is obviously necessary, and the converse direction follows from the fact that limits commute with limits.
\end{proof}

\subsection{Descendability and descent}

\begin{prop}\label{prop:proper descendable implies descent}
  Let $\D$ be a radditive weave.
  Let $f : Y \to X$ be a proper morphism of algebraic spaces.
	If $f$ is descendable in $\D$, then the presheaves $\D^*$ and $\D^!$ satisfy \v{C}ech descent along $f$.
\end{prop}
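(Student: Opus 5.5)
We use Lemmas~\ref{lem:abstr*prop} and \ref{lem:abstr!prop} for the single-element family $(f)$. Since $f$ is proper, $f_! \simeq f_*$, and $f_*$ commutes with $*$-inverse image by proper base change. So in both lemmas the hypothesis on $f^*$, resp.\ $f^!$, concerning split (co)simplicial diagrams is the only point beyond separatedness. Instead of verifying it separately, we prove condition~(ii) of each lemma directly from descendability and then observe that the comonadicity argument goes through.

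\emph{The $*$-case.} Write $A = f_*\un_Y$. By the projection formula for the proper morphism $f$ (and its iterates $f_n : Y_n = Y^{\times_X (n+1)} \to X$, together with proper base change), we get $f_{n,*}f_n^*(\sF) \simeq A^{\otimes (n+1)} \otimes \sF$, naturally in the cosimplicial structure. Thus the augmented cosimplicial diagram in (ii) is $\sF \to A^{\otimes \bullet+1}\otimes\sF$, that is, $(-)\otimes\sF$ applied to the Čech nerve $A^\bullet$ of $\un_X \to A$. By descendability the pro-object $\{\Tot_{\le n}A^\bullet\}_n$ is pro-equivalent to $\un_X$. Since $(-)\otimes\sF$ is exact, it commutes with finite partial totalizations, so $\{\Tot_{\le n}(A^\bullet\otimes\sF)\}_n$ is pro-equivalent to the constant object $\sF$. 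Hence its limit is $\sF$ (Remark~\ref{rem:descendable commute with limits}), which gives (ii).

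For (i), we need more: descent should hold after any exact functor. We apply the same pro-constancy after applying $f^*$. Indeed, $f^*$ is exact and $f^*(A^\bullet\otimes\sF)$ is again pro-constant, so $f^*$ preserves the relevant totalizations, and the Lurie criterion invoked in Lemma~\ref{lem:abstr*prop} applies. More precisely, to use the criterion one checks that $f^*$ preserves totalizations of $f^*$-split diagrams. Failing that, one argues directly that $F^*$ is fully faithful by (ii) and essentially surjective because any descent datum $\sG^\bullet$ gives an object $\Tot(f_{\bullet,*}\sG^\bullet)$ whose pullback is computed termwise thanks to pro-constancy. The latter argument is the one I would actually write. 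The required pro-constancy is that of $\{\Tot_{\le n} f_{\bullet,*}\sG^\bullet\}$, and it holds because this tower is a module over the pro-constant tower $\Tot_{\le n}A^\bullet$ (Mathew's argument: the objects are $A$-module-like objects built from $A$).

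\emph{The $!$-case.} Dually, $f_!f^!(\sF) \simeq f_*\uHom(A,\sF)$-type expressions give $f_{n,!}f_n^!\sF \simeq \uHom(A^{\otimes n+1},\sF)$, using the right adjoint of the projection formula. The simplicial diagram in (ii) is then $\uHom(A^\bullet,\sF)$. Since $\uHom(-,\sF)$ is exact and contravariant, it sends the pro-constant tower to an ind-constant system of partial geometric realizations, whose colimit is therefore $\sF$. Essential surjectivity follows dually.

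The main obstacle I expect is the essential surjectivity, i.e.\ the passage from (ii) to (i), since the split-diagram hypothesis of the lemmas is not obviously available. Here one has to run Mathew's argument that descendable algebras give comonadic (resp.\ monadic) base change, transported through the identification of $\D(Y_n)$-descent data with $A^\bullet$-comodules.
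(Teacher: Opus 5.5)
Your derivation of condition (ii) is fine. The identification $f_{n,*}f_n^*\sF\simeq A^{\otimes n+1}\otimes\sF$, together with pro-constancy of $\{\Tot_{\le n}A^\bullet\}$, gives (ii), and hence conservativity and full faithfulness of $F^*$. The $\D^!$ case via $\uHom(A^{\otimes n+1},\sF)$ works the same way.

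The gap is exactly the step you flag, the passage to (i), and neither of your two routes closes it.
\begin{itemize}
\item Your first route only shows that $f^*$ commutes with the totalizations of the specific diagrams $A^\bullet\otimes\sF$. The Barr--Beck--Lurie criterion in \lemref{lem:abstr*prop} needs this for \emph{every} $f^*$-split cosimplicial object.
\item Your second route rests on the claim that $\{\Tot_{\le n}f_{\bullet,*}\sG^\bullet\}$ is pro-constant ``because it is a module over the pro-constant tower $\{\Tot_{\le n}A^\bullet\}$''. That principle is false. For $f=\id_X$ the tower $\{\Tot_{\le n}A^\bullet\}$ is constant on $\un_X$, and every cosimplicial object is a module over it, yet Tot towers are not pro-constant in general.
\item The fallback that the terms are ``built from $A$'' is also vacuous: once $A$ is descendable, every object lies in the thick tensor ideal generated by $A$.
\item Nor can you simply import Mathew's comonadicity theorem. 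It concerns $\Mod_{\un}\to\Mod_A$, and $\D(Y)$ is not $\Mod_A(\D(X))$ for a non-affine proper $f$.
\end{itemize}
What actually matters is the cartesianness of the descent datum, not any module structure.

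The missing idea is a thick-subcategory argument driven by the projection formula, which is what the paper does. Given an $f^*$-split cosimplicial object $\sK^\bullet$, let $\sD\sub\D(X)$ consist of those $\sF$ for which $f^*\Tot(\sK^\bullet\otimes\sF)\to\Tot(f^*(\sK^\bullet\otimes\sF))$ is invertible.
\begin{itemize}
\item $\sD$ is thick.
\item $\sD$ contains the essential image of $f_*$, since $\sK^\bullet\otimes f_*\sG\simeq f_*(\sG\otimes f^*\sK^\bullet)$ is split.
\item That essential image is a tensor ideal containing $A$, so descendability gives $\un_X\in\sD$. Hence $f^*$ preserves $\Tot(\sK^\bullet)$.
\end{itemize}
Dually, for an $f^!$-split simplicial object $\sK_\bullet$, use the subcategory of $\sF$ for which $f^!$ commutes with $\abs{\uHom(\sF,\sK_\bullet)}$, together with $\uHom(f_*\sG,\sK_\bullet)\simeq f_*\uHom(\sG,f^!\sK_\bullet)$. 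The paper also proves conservativity by a separate thick-subcategory argument; your (ii) already supplies that.

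Applying the same argument to $\sK^\bullet=f_{\bullet,*}\sG^\bullet$ would also rescue your direct essential-surjectivity route. Its pullback $f^*f_{\bullet,*}\sG^\bullet$ is split, because by base change and cartesianness of the datum it is the d\'ecalage of $\sG^\bullet$, which has extra codegeneracies.
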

\begin{proof}
  Let $\sK \in \D(X)$ be an object such that $f^*\sK \simeq 0$ and consider the full subcategory $\sC \sub \D(X)$ spanned by $\sF$ such that $\sK \otimes \sF \simeq 0$.
  Then $\sC$ is a thick subcategory closed under $(-) \otimes \sF'$ for every $\sF' \in \D(X)$.
  By the projection formula for $f_* \simeq f_!$, $\sK \otimes f_* \un_Y \simeq f_* f^* \sK \simeq 0$, i.e., $\sC$ contains $f_* \un_Y$.
  Since $f$ is descendable, it follows that $\un_X \in \sC$, i.e. $\sK \simeq 0$.

  Similarly, if $\sK$ is an object with $f^!\sK \simeq 0$, then
  \[
    \uHom(f_* \un_Y, \sK) \simeq f_*\uHom(\un_Y, f^!\sK) \simeq f_* f^!\sK \simeq 0.
  \]
  Thus $f_*\un_Y$ belongs to the full subcategory $\sC' \sub \D(X)$ spanned by $\sF$ such that $\uHom(\sF, \sK) \simeq 0$.
  This is also a thick subcategory containing $f_*\un_Y$, since for all $\sF'$,
  \[\uHom(\sF \otimes \sF', \sK) \simeq \uHom(\sF', \uHom(\sF, \sK)) \simeq 0. \]
  Applying descendability again, we conclude that $\un_X \in \sC'$, i.e. $\sK \simeq 0$.

  We now show that $\D^*$ satisfies \v{C}ech descent along $f$.
  By \lemref{lem:abstr*prop} and the conservativity of $f^*$, it suffices to show that $f^*$ preserves totalizations of $f^*$-split cosimplicial objects.
  Let $\sK^\bullet$ be such an object and let $\sD \sub \D(X)$ be the full subcategory of those $\sF$ for which the canonical morphism $f^* \Tot(\sK^\bullet \otimes \sF) \to \Tot(f^*(\sK^\bullet \otimes \sF))$ is invertible.
  Then $\sD$ is thick, and contains the essential image of $f_*$: for every $\sG \in \D(Y)$, $\sK^\bullet \otimes f_*\sG \simeq f_*(\sG \otimes f^* \sK^\bullet)$ is split since $f^*\sK^\bullet$ is (and any functor tautologically preserves totalizations of split cosimplicial objects).
  Since the essential image of $f_*$ is closed under $(-) \otimes \sF'$ for every $\sF' \in \D(X)$ (by the projection formula, $f_*(\sG) \otimes \sF' \simeq f_*(\sG \otimes f^*\sF')$), the descendability of $f$ thus implies that $\sD$ contains $\un_X$.
  That is, $f^*$ preserves the totalization of $\sK^\bullet$.

  Dually, for $\D^!$, it suffices using \lemref{lem:abstr!prop} and the conservativity of $f^!$ to check that $f^!$ preserves geometric realizations of $f^!$-split simplicial objects.
  Let $\sK_\bullet$ be such an object and let $\sD' \sub \D(X)$ be the full subcategory of those $\sF$ for which the canonical morphism $\abs{f^!\uHom(\sF, \sK_\bullet)} \to f^! \abs{\uHom(\sF, \sK_\bullet)}$ is invertible.
  Then $\sD'$ is thick, and contains the essential image of $f_*$: by the proper projection formula
  \[
    \uHom(f_* \sG, \sK_\bullet) \simeq f_* \uHom(\sG, f^! \sK_\bullet),
    \qquad \sG \in \D(Y),
  \]
  is split, since $f^! \sK_\bullet$ is.
  By descendability we conclude again that $\sD'$ contains $\un_X$, i.e., $f^!$ preserves the geometric realization of $\sK_\bullet$.
\end{proof}

\subsection{Étale descent}

We say that the weave $\D$ satisfies \emph{étale descent} if $\D^*$ (equivalently $\D^!$) satisfies \v{C}ech descent along any étale surjection.

Recall that a smooth morphism is surjective if and only if it is surjective on $\kappa$-valued points for all separably closed fields $\kappa$.
We say it is \emph{cd-surjective} (where \emph{cd} stands for ``completely decomposed'') if it is surjective on $\kappa$-valued points for all fields $\kappa$, and $\D$ satisfies \emph{Nisnevich descent} if $\D^*$ (equivalently $\D^!$) satisfies \v{C}ech descent along any cd-surjective étale morphism.

\begin{prop}\label{prop:et}
  Let $\D$ be a weave.
  Consider the following conditions:
  \begin{thmlist}
    \item\label{item:et/et} The weave $\D$ satisfies étale descent.
    \item\label{item:et/fet} The presheaf $\D^*$ (equivalently $\D^!$) satisfies \v{C}ech descent along finite étale surjections.
    \item\label{item:et/cons} The presheaf $\D^*$ is separated with respect to finite étale surjections.
  \end{thmlist}
  Then we have \itemref{item:et/et} $\Rightarrow$ \itemref{item:et/fet} $\Rightarrow$ \itemref{item:et/cons}.
  If $\D$ satisfies Nisnevich descent, then also \itemref{item:et/cons} $\Rightarrow$ \itemref{item:et/et}.
\end{prop}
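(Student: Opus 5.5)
The implications \itemref{item:et/et} $\Rightarrow$ \itemref{item:et/fet} $\Rightarrow$ \itemref{item:et/cons} are formal. A finite étale surjection is in particular an étale surjection, so étale descent gives \itemref{item:et/fet}. Then \lemref{lem:abstr*prop} (\v{C}ech descent implies separatedness, applied to a single morphism) gives \itemref{item:et/cons}. The parenthetical ``equivalently $\D^!$'' in \itemref{item:et/fet} should also be checked. For an étale morphism $f$ we have $f^! \simeq f^*$, and for a finite morphism $f_! \simeq f_*$. Hence the descent diagrams for $\D^*$ and $\D^!$ along a finite étale surjection coincide: one can either compare the two cosimplicial diagrams of categories directly, or note that both conditions reduce to the same separatedness statement via Lemmas~\ref{lem:abstr*prop} and \ref{lem:abstr!prop}.

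For \itemref{item:et/cons} $\Rightarrow$ \itemref{item:et/et} under Nisnevich descent, I will first upgrade separatedness to \v{C}ech descent along finite étale surjections. A finite étale $f$ is proper, so $f^*$ has right adjoint $f_* \simeq f_!$ and $f^* \simeq f^!$ has both adjoints. In particular $f^*$ preserves all limits, hence totalizations of $f^*$-split cosimplicial objects. The criterion of \lemref{lem:abstr*prop} then turns conservativity into \v{C}ech descent. Since separatedness is preserved under base change, this holds along every base change of a finite étale surjection.

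Next I reduce an arbitrary étale surjection $f : Y \twoheadrightarrow X$ to this case. By \lemref{lem:abstr*prop}, and since $f^*$ for $f$ étale has a left adjoint $f_\sharp \simeq f_!$ and so preserves limits, it suffices to show that $f^*$ is conservative. The same argument makes Nisnevich and finite-étale descent available along all base changes. The key geometric input is that étale surjections are, Nisnevich-locally, refined by finite étale surjections. More precisely, let $p : X' \to X$ be a Nisnevich covering of $X$ by henselizations, or better a Nisnevich cover by affine étale neighbourhoods. After replacing $X$ by $X'$, we may find finite étale surjections $X'' \to X'$ such that $Y\times_X X''\to X''$ admits a section Nisnevich-locally. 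Then \lemref{lem:desctarg}, applied first with the Nisnevich cover and then with the finite étale cover, reduces us to families admitting sections Nisnevich-locally, for which descent is automatic.

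I expect this geometric reduction to be the main obstacle. One clean route is via points. Conservativity for the Nisnevich topology is detected on henselian local schemes (or via the stalks $x^*$ on henselizations, using Nisnevich descent and continuity-free arguments). Over a henselian local $X$ with closed point of residue field $\kappa$, the étale surjection $Y \to X$ is hit by some point $y$ over the closed point with $\kappa(y)/\kappa$ finite separable. By henselianity, the corresponding finite étale $X'' \to X$ lifting $\kappa(y)$ factors through $Y$, so $X''\to X$ refines $f$. Then $f^*\sK=0$ implies $\sK|_{X''}=0$, and hence $\sK=0$ by \itemref{item:et/cons}. To avoid needing stalks, I would instead phrase this over a quasi-compact $X$ by a noetherian-induction and limit argument: any étale surjection is refined, after a Nisnevich cover, by a finite étale surjection. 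This refinement holds Nisnevich-locally on any qcqs base, since étale maps are Nisnevich-locally finite étale over their image, by Zariski's Main Theorem together with henselization. Combining this with \lemref{lem:desctarg} gives the result.
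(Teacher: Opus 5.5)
Your proposal is correct and follows essentially the paper's route: the forward implications are formal; separatedness along finite étale surjections upgrades to \v{C}ech descent because $f^* \simeq f^!$ preserves limits (the paper cites the corresponding lemma from its companion paper on lisse extensions); and the converse rests on the étale topology being generated by Nisnevich covers and finite étale covers, which the paper simply asserts and you justify via henselization. Keep only the stalk-free Nisnevich-local refinement version of that last step: the henselian-stalk version would need the pullbacks to henselizations to be jointly conservative, which is a continuity property not assumed for a general weave, and the refinement argument needs no noetherian induction.
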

\begin{proof}
  The forward implications are clear.
  Suppose $\D$ satisfies Nisnevich descent.
  Since the étale topology is generated by Nisnevich covers and finite étale covers, we then have \itemref{item:et/et} $\Leftrightarrow$ \itemref{item:et/fet}.
  By \cite[Lem.~2.3.3]{weavelisse}, we also have \itemref{item:et/fet} $\Leftrightarrow$ \itemref{item:et/cons}.
\end{proof}

\begin{cor}\label{cor:fet desc implies et descent}
	Let $\D$ be a radditive weave.
	If every finite étale surjection is descendable, then $\D$ satisfies étale descent.
\end{cor}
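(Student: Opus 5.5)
Every finite étale morphism is proper, so \propref{prop:proper descendable implies descent} applies to each finite étale surjection $f : Y \twoheadrightarrow X$. Since $f$ is descendable by hypothesis, the presheaves $\D^*$ and $\D^!$ satisfy \v{C}ech descent along $f$. In particular $\D^*$ satisfies \v{C}ech descent along finite étale surjections, which is condition \itemref{item:et/fet} of \propref{prop:et}. It remains to upgrade this to descent along arbitrary étale surjections, and here I would not want to assume Nisnevich descent, since the statement does not include it.

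The plan is to reduce an arbitrary étale surjection $g : U \twoheadrightarrow X$ to finite étale ones plus sections, using the stratification argument of \thmref{thm:descendability}. I would show directly that every étale surjection of qcqs algebraic spaces is descendable, in three steps.
\begin{enumerate}
  \item Étale maps are finitely presented and there are no radicial complications. By \cite[IV\textsubscript{4}, Prop.~17.16.4]{EGA} (or the étale-local structure of étale maps), there is a stratification of $X$ by locally closed pieces $X_\alpha$ with finite étale surjections $X''_\alpha \to X_\alpha$ over which $g$ acquires a section.
  \item By \lemref{lem:descendability section} and \corref{cor:descendable local}, each $g \times_X X_\alpha$ is descendable. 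The exchange hypotheses of that corollary hold because finite étale maps are proper, so $p_* \simeq p_!$ and proper base change and the projection formula are available.
  \item \lemref{lem:descendability and localization} glues the strata to show that $g$ itself is descendable.
\end{enumerate}
The main obstacle is that \lemref{lem:descendability and localization} requires the localization property, which a bare radditive weave may not have. If the weaves of \cite{weaves} satisfy it by definition, the argument goes through. Otherwise I would fall back on the Nisnevich route below.

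Once $g$ is descendable, \v{C}ech descent along $g$ still has to be extracted, and \propref{prop:proper descendable implies descent} needs properness. For étale $g$ I would rerun its proof with $g_!$ and $g^! \simeq g^*$ in place of $g_*$ and $f^*$. Conservativity of $g^*$ follows from descendability exactly as in the first paragraph of that proof, using the projection formula for $g_!$. Preservation of totalizations of $g^*$-split objects by $g^* \simeq g^!$ is then automatic, since $g^!$ is a right adjoint and preserves all limits. \lemref{lem:abstr*prop} then gives descent along $g$.

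Alternatively, and more economically, I would argue as follows. Étale morphisms are locally of finite presentation, and the étale topology is generated by Nisnevich coverings together with finite étale surjections. Nisnevich descent for a weave should be part of the standing axioms, or follow from the localization property together with étale base change. Given Nisnevich descent, \propref{prop:et} shows that \itemref{item:et/fet}, already established above, is equivalent to full étale descent. I expect to present the proof this way, citing \propref{prop:et}, and to use the direct descendability argument only if Nisnevich descent is not available.
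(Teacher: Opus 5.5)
Your main route is the paper's proof. Finite étale surjections are proper, so \propref{prop:proper descendable implies descent} gives \v{C}ech descent along them, and \propref{prop:et} upgrades this to étale descent; as you observe, that last step needs Nisnevich descent, which the paper's one-line proof also leaves implicit. Your fallback is not needed, and its conservativity step fails as written: descendability concerns $g_*\un_U$, and for non-proper étale $g$ there is no projection formula identifying $\sK \otimes g_*\un_U$ with $g_*g^*\sK$, while $g_!$ plays no role in the hypothesis. With localization, you would instead prove conservativity of $g^*$ directly from the stratification, and then use $g^* \simeq g^!$ as you describe.
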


\begin{proof}
	Combine \propref{prop:proper descendable implies descent} and \propref{prop:et}.
\end{proof}

\begin{cor}\label{cor:etQ}
  Let $\D$ be an oriented $\bQ$-linear topological weave satisfying continuity and geometric generation.
  Then it satisfies étale descent on qcqs algebraic spaces.
\end{cor}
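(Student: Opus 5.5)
The plan is to combine \corref{cor:fet chi qcqs} with \corref{cor:fet desc implies et descent}, after restricting to qcqs algebraic spaces.

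First, \corref{cor:fet chi qcqs} applies directly under our hypotheses: $\D$ is an oriented $\bQ$-linear topological weave satisfying continuity and geometric generation. It gives that every finite étale surjection $f : Y \twoheadrightarrow X$ of qcqs algebraic spaces has invertible Euler characteristic \eqref{eq:fetcons}. In particular $f$ is descendable of index $\le 1$, since the unit $\un_X \to f_*\un_Y$ then admits a retraction.

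Second, since $f$ is finite, hence proper, \propref{prop:proper descendable implies descent} shows that $\D^*$ and $\D^!$ satisfy \v{C}ech descent along $f$; a topological weave is radditive, so that proposition applies. In particular $\D^*$ is separated with respect to finite étale surjections of qcqs spaces.

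Third, we pass from finite étale descent to étale descent via \propref{prop:et}. This requires Nisnevich descent, which holds for any topological weave: localization together with étale base change gives Nisnevich excision, and hence descent for cd-surjective étale maps. One applies this on the full subcategory of qcqs algebraic spaces. This is stable under fibre products and under the base changes used in the argument, and quasi-compact étale covers of a qcqs space may be refined by finite families with qcqs sources.

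The main subtlety is bookkeeping. We must check that \propref{prop:et}, and the generation of the étale topology by Nisnevich and finite étale covers, work within qcqs algebraic spaces. Equivalently, we must check that \corref{cor:fet desc implies et descent} can be run with the hypothesis ``every finite étale surjection of qcqs spaces is descendable''. I expect this to go through, since finite étale covers of qcqs spaces have qcqs sources, and the Nisnevich-local splitting of a finite étale cover used in \propref{prop:et} remains within qcqs spaces.
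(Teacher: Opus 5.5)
Your proposal is correct and is essentially the paper's argument: the paper proves \corref{cor:etQ} by combining \corref{cor:fet chi qcqs} with \corref{cor:fet desc implies et descent}, and the latter is exactly \propref{prop:proper descendable implies descent} plus \propref{prop:et}, as you unpack it. Your additional remarks on Nisnevich descent (coming from localization) and on staying within qcqs algebraic spaces make explicit hypotheses and bookkeeping that the paper leaves implicit.
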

\begin{proof}
  Combine \corref{cor:fet desc implies et descent} with \corref{cor:fet chi qcqs}.
\end{proof}

\subsection{Proper descent}

\begin{cor}\label{cor:proper descent}
  Let $\D$ be a weave satisfying the localization property.
  If finite étale surjections and finite radicial surjections are descendable, then the presheaves $\D^*$ and $\D^!$ satisfy \v{C}ech descent for proper surjections of finite presentation.
\end{cor}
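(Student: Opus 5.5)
The plan is to combine \thmref{thm:descendability} with \propref{prop:proper descendable implies descent}. Let $f : Y \twoheadrightarrow X$ be a proper surjection of finite presentation. Since \v{C}ech descent is a statement about a fixed base $X$, I would first reduce to the case where $X$ is qcqs. To do this, cover $X$ by quasi-compact opens (or by affine étale neighbourhoods) and use Zariski/Nisnevich-local nature. A cleaner option is to note that the corollary is implicitly about the qcqs setting of \thmref{thm:descendability}, in which case no reduction is needed.

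\emph{Step 1.} With $X$ qcqs, $Y$ is also qcqs, since $f$ is of finite presentation, hence quasi-compact and quasi-separated. The hypothesis is exactly condition \itemref{item:desc/blocks} of \thmref{thm:descendability}, and the weave satisfies the localization property. The theorem therefore applies and shows that $f$, being a finitely presented surjection, is descendable in $\D$.

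\emph{Step 2.} Since $f$ is proper, \propref{prop:proper descendable implies descent} applies and gives \v{C}ech descent for both $\D^*$ and $\D^!$ along $f$. That proposition requires radditivity. I would check that radditivity follows from the localization property: for $X = X_1 \sqcup X_2$, each $X_i$ is both open and closed, so the localization triangle splits and $\D(X) \simeq \D(X_1) \times \D(X_2)$. This is also how the proof of \thmref{thm:descendability} already uses radditivity.

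\emph{Step 3.} \v{C}ech descent "for proper surjections of finite presentation" should be stable under base change, and indeed proper finitely presented surjections are stable under base change, so each base change is again handled by Steps 1–2. If the intended statement concerns finite families $(f_\alpha)$ of such maps, I would replace the family by the single map $\coprod_\alpha Y_\alpha \to X$. This map is again a proper finitely presented surjection, and radditivity identifies descent along the family with descent along the single map.

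The only potentially delicate point is the non-qcqs case. There I would cover $X$ by quasi-compact opens $U$, use that $\D^*$ satisfies Zariski descent (a consequence of the localization property), and apply \lemref{lem:desctarg}.
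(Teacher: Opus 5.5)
Your proposal is correct and follows the paper's proof: the paper observes that $f$ is descendable by \thmref{thm:descendability} and then applies \propref{prop:proper descendable implies descent}. Your extra remarks (radditivity from localization, stability under base change, and the reduction to the qcqs case) are additional care that the paper leaves implicit.
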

\begin{proof}
  Let $f : Y \twoheadrightarrow X$ be a proper surjection of finite presentation.
  Since $f$ is proper and descendable (\thmref{thm:descendability}), the claim follows from \propref{prop:proper descendable implies descent}.
\end{proof}

\begin{rem}\label{rem:weave totalization commutes}
  In the situation of \corref{cor:proper descent}, \v{C}ech descent for a proper surjection of finite presentation $f : Y \twoheadrightarrow X$ gives in particular
  \[
    \sF \simeq \Tot( f_{\bullet,*} f_{\bullet}^* \sF ),
    \qquad \text{for all } \sF \in \D(X),
  \]
  where $Y_\bullet$ is the \v{C}ech nerve of $f$ and $f_\bullet : Y_\bullet \to X$ the canonical morphisms.
  The fact that $f$ is \emph{descendable} (\thmref{thm:descendability}) means that this totalization is preserved by any exact functor of presentable stable \inftyCats $F : \D(X) \to \sV$:
  \[
    F \bigl( \Tot( f_{\bullet,*} f_{\bullet}^* \sF ) \bigr)
    \simeq \Tot \bigl( F (f_{\bullet,*} f_{\bullet}^* \sF) \bigr).
  \]
  This follows from \remref{rem:descendable commute with limits} together with the proper projection formula for $f_{\bullet,*}$.
\end{rem}

\subsection{$h$-descent}
\label{ssec:v}

From the previous results we can derive descent for Voevodsky's $h$-topology.
Later, we will also consider the finer $v$-topology of \cite{RydhSubmersions,BhattScholze}, and the even finer arc-topology of \cite{BhattMathew}.

\begin{defn}\leavevmode
  \begin{defnlist}
    \item 
    A morphism of algebraic spaces $f : Y \twoheadrightarrow X$ is an \emph{arc-cover} if it is quasi-compact and, for every morphism $\Spec(V) \to X$ with $V$ a valuation ring of rank $\le 1$, there is an extension $V \hook W$ of rank-$\le 1$ valuation rings and a lift $\Spec(W) \to Y$.
    \item
    A morphism of algebraic spaces $f : Y \twoheadrightarrow X$ is a \emph{$v$-cover} if it is quasi-compact and, for every morphism $\Spec(V) \to X$ with $V$ a valuation ring, there is an extension $V \hook W$ of valuation rings and a lift $\Spec(W) \to Y$.
    \item
    An \emph{$h$-cover} is a $v$-cover of finite presentation.
  \end{defnlist}
\end{defn}

By \cite[Thm.~2.8]{RydhSubmersions}, $v$-covers are the same as \emph{universally subtrusive} quasi-compact morphisms in the sense of \cite[Def.~2.2]{RydhSubmersions}.
Quasi-compact faithfully flat morphisms and proper surjections are $v$-covers.
On noetherian schemes, arc-covers are the same as $v$-covers (see \cite[Thm.~2.8]{RydhSubmersions}), and $h$-covers are $v$-covers of finite type.

\begin{prop}\label{prop:h}
  Let $\D$ be a weave satisfying Nisnevich descent.
  Then $\D^*$ (resp. $\D^!$) satisfies \v{C}ech descent along $h$-covers if and only if it satisfies \v{C}ech descent along proper finitely presented surjections.
\end{prop}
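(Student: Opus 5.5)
The forward direction is immediate, since proper finitely presented surjections are $h$-covers (proper surjections are $v$-covers). For the converse I will use the structure theorem for $h$-covers: by Rydh's refinement of Voevodsky's result (\cite{RydhSubmersions}, the analogue of \cite[Thm.~3.1.9]{VoevodskyH} for qcqs algebraic spaces), every $h$-cover $f : Y \twoheadrightarrow X$ can be refined, after a Nisnevich (or quasi-compact étale/open) covering of $X$, by a composite
\[
  W \xrightarrow{u} X' \xrightarrow{p} X,
\]
where $p$ is a proper finitely presented surjection and $u$ is a Nisnevich covering (a finite family of étale maps, jointly cd-surjective), and the composite factors through $f$. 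The plan is then to bootstrap descent along such refinements.

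First, I will check that descent is inherited under refinement. If $g : W \to X$ factors through $f$ and $\D^*$ satisfies \v{C}ech descent along every base change of $g$ (and of its fibre products with $f$), then it satisfies descent along $f$. This is the standard sieve argument: the sieve generated by $f$ contains that generated by $g$. I will make it precise using \lemref{lem:desctarg}, applied with $X' := W$. After base change along $g$, the family $f$ acquires a section, and descent along a morphism with a section is automatic, since the \v{C}ech nerve is split. Hence descent along $f\times_X W \to W$ holds for all base changes, and \lemref{lem:desctarg} transfers it to $f$. The same argument works for $\D^!$.

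Second, I will verify descent along each composite $p\circ u$ and along all its base changes. Both classes are stable under base change: proper finitely presented surjections give descent by hypothesis, and Nisnevich covers by assumption. Descent along a composite of two morphisms that are universally of effective descent follows again from \lemref{lem:desctarg}: first descend along $p$, reducing to descent along the base change of $u$ over $X'$, which is Nisnevich. A preliminary Nisnevich localization of $X$ is absorbed in the same way, using Nisnevich descent together with \lemref{lem:desctarg} once more. Throughout, families are reduced to single morphisms via radditivity, replacing a finite family by its disjoint union.

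I expect the main obstacle to be the geometric input: locating a form of the $h$-cover structure theorem valid for qcqs algebraic spaces with finitely presented hypotheses, rather than noetherian schemes. Here I would cite \cite{RydhSubmersions}. If only the noetherian statement were available, the fallback would be noetherian approximation \cite{RydhApprox}: write $f$ as a base change of an $h$-cover of finite-type spaces over $\Spec(\bZ)$, refine it there, and base change back. Refinements by proper-then-Nisnevich composites are stable under base change, so this step goes through.
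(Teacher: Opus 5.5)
This is essentially the paper's argument: reduce Nisnevich-locally to $X$ affine, refine the $h$-cover by a quasi-compact open covering followed by a proper finitely presented surjection \cite[Thm.~3.12]{RydhSubmersions}, and conclude by the refinement and composition properties of universal descent. The paper cites these properties from \cite[Lem.~3.1.2\,(3),(4)]{LiuZheng}, whereas you derive them from \lemref{lem:desctarg}. One small correction to your composition step: applying \lemref{lem:desctarg} along $p$ leaves you with $W \fibprod_X X' \to X'$, not a base change of $u$; but $u$ refines it, so your refinement step finishes the job.

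The one point to pin down is that the preliminary localization must be Nisnevich rather than étale, since étale descent is not assumed. The paper obtains it from \cite[Thm.~1.2]{DeshmukhHall} (a Nisnevich cover by a scheme, then affine opens), and applies Rydh's theorem only once $X$ is affine.
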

\begin{proof}
  Let $f : Y \twoheadrightarrow X$ be an $h$-cover of algebraic spaces.
  By \cite[Thm.~1.2]{DeshmukhHall}, the algebraic space $X$ admits an \'etale morphism $U \twoheadrightarrow X$ with Nisnevich-local sections, where $U$ is a scheme.
  Choose an affine open cover $U = \bigcup_i U_i$.
  Since $\D^*$ satisfies Nisnevich descent, we may use \lemref{lem:desctarg} to replace $f$ by the base changes $f\fibprod_X U_i$ and thereby assume $X$ is an affine scheme.
  By \cite[Thm.~3.12]{RydhSubmersions}, $f$ admits a refinement $f' : Y' \to X$ which factors as a quasi-compact open covering $p : Y' \twoheadrightarrow X'$ followed by a finitely presented proper surjection $q : X' \twoheadrightarrow X$.
  By assumption, $p$ and $q$ satisfy \v{C}ech descent after any base change, so \cite[Lem.~3.1.2\,(4)]{LiuZheng} now shows that $f'$ satisfies \v{C}ech descent, hence by \cite[Lem.~3.1.2\,(3)]{LiuZheng} $f$ itself satisfies \v{C}ech descent.
\end{proof}

\section{Rational motivic sheaves}

For an algebraic space $S$, we denote by $\SH(S)$ the stable \inftyCat of motivic spectra over $S$.
We denote by $\SH(S)_\Q$ its rationalization, which admits the functorial decomposition (see e.g. \cite[\S 16.2]{CisinskiDegliseBook})
\begin{equation*}
  \SH(S)_\Q \simeq \SH(S)_{\Q,+} \times \SH(S)_{\Q,-}.
\end{equation*}
We will write $\DM(S)_\Q := \SH(S)_{\Q,+}$ and refer to it as the \inftyCat of \emph{rational motivic sheaves} on $S$.
We will write $\Q := \un_{\Q,+}$ for its unit object.

This construction defines a topological weave over arbitrary algebraic spaces, and agrees with other models of rational motivic sheaves defined under more restrictive hypotheses.
For example, it is equivalent to Voevodsky's category when $S$ is geometrically unibranch and excellent (see \cite[Thm.~16.1.4, Thm.~16.2.13]{CisinskiDegliseBook}).

\ssec{Descendability and $h$-descent}

The weave $\SH$ itself satisfies neither étale descent nor topological invariance.
We mention the following positive result.

\begin{prop}
  Let $X$ be a scheme over $\mathbf{F}_p$ and $f : Y\twoheadrightarrow X$ a finite étale surjection of degree $d=p^k$, $k\ge 1$.
  Then $\SH^*[1/p]$ satisfies \v{C}ech descent along $f$.
\end{prop}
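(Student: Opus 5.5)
The plan is to show that $f$ is descendable of index $\le 1$ in the weave $\SH[1/p]$, and then deduce \v{C}ech descent. Since $f$ is finite étale, it is proper, so \propref{prop:proper descendable implies descent} applies once descendability is known. That proposition gives \v{C}ech descent for $\SH^*[1/p]$ along $f$, and for $\SH^!$ as well. Descendability of index $\le 1$ follows from invertibility of the Euler characteristic \eqref{eq:fetcons}. By \corref{cor:fet chi approx}, applied with $S = \Spec(\mathbf{F}_p)$ to the weave $\SH[1/p]$ (which inherits continuity and geometric generation from $\SH$), it suffices to show one thing. For every field $\kappa$ of characteristic $p$, the class $d_\varepsilon$ with $d = p^k$ must be invertible in $\End_{\SH(\kappa)[1/p]}(\un_\kappa)$. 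If $X$ is not qcqs, I would first reduce to that case using Nisnevich/Zariski descent for $\SH$ and \lemref{lem:desctarg}. Alternatively I would argue directly with conservativity of stalks for $\SH[1/p]$ over $X$.

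Next I would identify the endomorphism ring. Since $\SH[1/p]$ satisfies topological invariance in characteristic $p$ (\thmref{thm:topinv}), I may replace $\kappa$ by its perfection. Then Morel's theorem (in the version valid over perfect fields of any characteristic) identifies $\End(\un_\kappa)[1/p] \simeq \mathrm{GW}(\kappa)[1/p]$. Under this identification, $d_\varepsilon$ is the class of the trace form of the étale $\kappa$-algebra of degree $p^k$ obtained by pulling back $f$ (\lemref{lem:fet chi field}). Its rank is $p^k$.

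The main step is showing that an element of $\mathrm{GW}(\kappa)[1/p]$ of rank $p^k$ is invertible, and this splits by the parity of $p$.

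\emph{Case $p$ odd.} Use the fibre square $\mathrm{GW}(\kappa) \simeq \Z \times_{\Z/2} \mathrm{W}(\kappa)$. A field of positive characteristic is not formally real, so the fundamental ideal $I \sub \mathrm{W}(\kappa)$ is nilpotent, and $\mathrm{W}(\kappa)$ is $2$-primary torsion. Hence an element of $\mathrm{W}(\kappa)$ is a unit iff its rank mod $2$ is $1$. For a form $q$ of odd rank $p^k$, its Witt class is therefore a unit. Its rank is a unit in $\Z[1/p]$. Since the square stays cartesian after inverting $p$, and the $\Z/2$-corner is unchanged because $p$ is odd, $q$ becomes invertible in $\mathrm{GW}(\kappa)[1/p]$.

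\emph{Case $p=2$.} After inverting $2$ the decomposition $\SH_{\Z[1/2]} = \SH_+ \times \SH_-$ is available. The minus part vanishes over fields of characteristic $2$, because there are no orderings and $-1$ is a square. So $\End(\un_\kappa)[1/2] \simeq \Z[1/2]$, with $d_\varepsilon \mapsto d = 2^k$, which is invertible.

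I expect this last step to be the delicate point. It requires justifying the identification of $\End(\un_\kappa)$ with $\mathrm{GW}$ in positive characteristic after inverting $p$, and the nilpotence/torsion facts about Witt rings of non-real fields. Once invertibility is in hand, the conclusion is formal by \corref{cor:fet chi approx} and \propref{prop:proper descendable implies descent}.
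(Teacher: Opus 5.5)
Your proposal is correct and follows the paper's proof. Both reduce to qcqs $X$, show that $d_\varepsilon$ is invertible in $\End_{\SH(\kappa)[1/p]}(\un_\kappa)$ for every field $\kappa$ of characteristic $p$, apply \corref{cor:fet chi approx} over $\Spec(\mathbf{F}_p)$ to get descendability of index $\le 1$, and conclude with \propref{prop:proper descendable implies descent}. The only difference is that the paper cites \cite[Lem.~2.2.8]{topinv} for the invertibility, whereas you prove it directly from the structure of $\mathrm{GW}(\kappa)[1/p]$. That argument works, with two harmless corrections: the fundamental ideal of $\mathrm{W}(\kappa)$ is nil (it is the unique prime ideal) but need not be nilpotent, and $d_\varepsilon=\sum_{i=1}^{d}\langle(-1)^{i-1}\rangle$ is a universal class rather than the trace form of the pulled-back algebra. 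Since you only use that the element has rank $p^k$, it does not matter which of the two you are inverting.
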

\begin{proof}
  We may assume that $X$ is qcqs.
  For every field $\kappa$ over $\mathbf{F}_p$, the class $d_\varepsilon$ is invertible in $\End_{\SH(\kappa)[1/p]}(\un_\kappa)$ by \cite[Lem.~2.2.8]{topinv}, since $d$ is a power of $p$ by assumption.
  As $\SH[1/p]$ satisfies continuity and geometric generation (see e.g. \cite[Exs.~A.4, A.9]{minus}), it follows by \corref{cor:fet chi approx} applied with $B = \Spec(\mathbf{F}_p)$ that the Euler characteristic \eqref{eq:fetcons} is invertible, whence $f$ is descendable of index $\le 1$.
  We conclude by \propref{prop:proper descendable implies descent}.
\end{proof}

After passing to the rationalization $\SH_\Q$, topological invariance holds by \cite{topinv} (cf. \thmref{thm:topinv}).
However, $\SH_\Q$ still does not satisfy étale descent.
The plus part $\SH_{\Q,+} \simeq \DM_\Q$ is an oriented $\Q$-linear topological weave (see e.g. \cite[\S 14, \S 16.2]{CisinskiDegliseBook}) satisfying continuity and geometric generation (see e.g. \cite[Exs.~A.4, A.9]{minus}).
Hence any finite étale surjection of qcqs algebraic spaces has Euler characteristic \eqref{eq:fetcons} invertible in $\DM_\Q$ (\corref{cor:fet chi qcqs}); in particular, it is descendable of index~$\le 1$.
By topological invariance, the same holds for finite radicial surjections (\thmref{thm:topinv}).
Thus \thmref{thm:descendability}, together with \remref{rem:descendability index bound}, yields:

\begin{thm}\label{thm:DMQ descendability}
  Let $f : Y \twoheadrightarrow X$ be a finitely presented surjection of qcqs algebraic spaces.
  Then $f$ is descendable in $\DM_\Q$.
  Moreover, if $X$ is of Krull dimension $d$ then there exists a natural number $N_X$ (depending only on $X$) such that every finitely presented surjection $f : Y \twoheadrightarrow X$ is descendable of index $\le N_X$ in $\DM_\Q$.
\end{thm}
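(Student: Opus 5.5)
The plan is to apply \thmref{thm:descendability} to the weave $\DM_\Q = \SH_{\Q,+}$, together with the quantitative refinement in \remref{rem:descendability index bound}. First we check the hypotheses on the weave. $\DM_\Q$ is a topological weave, so it satisfies the localization property and radditivity. It is also oriented and $\Q$-linear, and it satisfies continuity and geometric generation (as recalled just above the statement).

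The first key step is to verify condition \itemref{item:desc/blocks} of \thmref{thm:descendability}, in the strong form ``of index $\le 1$''.

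For a finite étale surjection $f : Y \twoheadrightarrow X$ of qcqs algebraic spaces, \corref{cor:fet chi qcqs} shows that the Euler characteristic \eqref{eq:fetcons} is invertible. Composing the counit in \eqref{eq:fetcons} with the inverse of this Euler characteristic gives a retraction of the unit $\un_X \to f_*\un_Y$. So $f$ is descendable of index $\le 1$.

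For a finite radicial surjection $f$, we use \thmref{thm:topinv}: every prime is invertible in $\End(\un_X)$ by $\Q$-linearity, so $f^*$ is an equivalence. Its right adjoint $f_*$ is then an equivalence as well, and the unit $\un_X \to f_*f^*\un_X = f_*\un_Y$ is invertible. In particular this unit has a retraction, so $f$ is descendable of index $\le 1$.

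Given these two cases, \thmref{thm:descendability} (direction \itemref{item:desc/blocks} $\Rightarrow$ \itemref{item:desc/all}) gives descendability of every finitely presented surjection of qcqs algebraic spaces. This is the first assertion.

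For the uniform bound, we run through the proof of \thmref{thm:descendability} while tracking indices.

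In the scheme case, take the stratification from EGA IV$_4$ 17.16.4. Over each stratum $X_\alpha$:
\begin{itemize}
\item the base change $f \times_X X''_\alpha$ has a section, hence index $\le 1$ by \lemref{lem:descendability section};
\item \corref{cor:descendable local}, applied along $h_\alpha$ and then along $g_\alpha$ (each of index $\le 1$), keeps the index at $\le 1$.
\end{itemize}
Radditivity extends index $\le 1$ to each $Z_k \setminus Z_{k-1}$. Each application of \lemref{lem:descendability and localization} then adds the indices, so we get index $\le n$, where $n$ is the length of the closed filtration. To make $n$ depend only on $X$ and not on $f$, choose the filtration by dimension, which gives $N_X = d+1$. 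The step I expect to be the main obstacle is justifying this: one must show that the EGA stratification can always be refined so that each layer $Z_k \setminus Z_{k-1}$ is a disjoint union of strata. I would first try to do this by intersecting the strata with the dimension filtration and using noetherian induction on constructible sets within each dimension layer. The point to check is that the successive localization steps within a single dimension layer can be absorbed by radditivity, since those strata are open and closed in the layer after refinement.

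For an algebraic space $X$, the monomorphic splitting sequence $V_0 \subseteq \cdots \subseteq V_m$ from \cite{HallRydhAddendum} depends only on $X$. The scheme case gives a bound $N_{Z_j}$ for each $Z_j$ independent of $f$. Summing via \lemref{lem:descendability and localization} then gives $N_X = \sum_j N_{Z_j}$.
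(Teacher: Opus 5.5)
Your proposal is the paper's own argument: index-$\le 1$ descendability of finite étale surjections (via \corref{cor:fet chi qcqs}) and of finite radicial surjections (via \thmref{thm:topinv} and $\Q$-linearity) is fed into \thmref{thm:descendability}, and the uniform bound comes from the index bookkeeping of \remref{rem:descendability index bound}, where the paper also asserts the refinement step you flag without proof. For that step you can avoid noetherian induction, and also avoid using the dimension of the strata themselves (already wrong for the two points of a DVR), by taking at each stage the union of the interiors of the constructible strata: this open set is a finite topological disjoint union of pieces of strata and contains every maximal point, so its closed complement has strictly smaller Krull dimension, and iterating gives $d+1$ layers.
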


By \corref{cor:fet desc implies et descent}, \corref{cor:proper descent}, and \propref{prop:h} we thus have:

\begin{cor}\label{cor:DMQ descent}
  The presheaves $\DM_\Q^*$ and $\DM_\Q^!$ satisfy $h$-descent on algebraic spaces.
\end{cor}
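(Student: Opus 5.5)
The plan is to deduce $h$-descent from \propref{prop:h}, after checking its two inputs: Nisnevich descent for $\DM_\Q$, and \v{C}ech descent along proper finitely presented surjections.

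\emph{Proper surjections.} The weave $\DM_\Q$ is topological, so it satisfies the localization property and is radditive. By \thmref{thm:DMQ descendability}, every finitely presented surjection of qcqs algebraic spaces is descendable. \corref{cor:proper descent} (equivalently \propref{prop:proper descendable implies descent}) then gives \v{C}ech descent for $\DM_\Q^*$ and $\DM_\Q^!$ along every proper finitely presented surjection $f : Y \twoheadrightarrow X$ with $X$ qcqs.

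\emph{Étale, hence Nisnevich, descent.} By \corref{cor:fet chi qcqs}, every finite étale surjection of qcqs algebraic spaces is descendable of index $\le 1$ in $\DM_\Q$. Hence, by \corref{cor:fet desc implies et descent}, $\DM_\Q$ satisfies étale descent on qcqs algebraic spaces; this is also the content of \corref{cor:etQ}. Nisnevich descent follows a fortiori.

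\emph{Removing the qcqs hypothesis.} Here $X$ is an arbitrary algebraic space. I would use that $\DM_\Q^*$ is a Zariski (indeed Nisnevich) sheaf on all algebraic spaces. Concretely, I would cover $X$ by an étale surjection from a disjoint union of affine schemes. Then \lemref{lem:desctarg}, together with the fact that limits commute with limits, reduces descent along a cover of $X$ to descent along the base-changed covers of the affine pieces and their qcqs fibre products. This is routine given that étale descent holds on the qcqs locus.

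\emph{Conclusion.} With both hypotheses of \propref{prop:h} verified, it gives \v{C}ech descent along $h$-covers for both $\DM_\Q^*$ and $\DM_\Q^!$. Since $h$-covers, together with the étale descent already established, generate the $h$-topology, this is $h$-descent.

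The main obstacle I expect is the reduction from general algebraic spaces to qcqs ones. It requires a sheaf property of $\DM_\Q$ for the covers used in \propref{prop:h}, which come from \cite{DeshmukhHall}, and these covers need not have quasi-compact sources. I would handle this using the Zariski and Nisnevich sheaf property of $\SH$ on all algebraic spaces, which is standard, combined with \lemref{lem:desctarg}.
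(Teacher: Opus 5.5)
Your proposal is correct and follows the paper's own route: the paper obtains the corollary in one line from \corref{cor:fet desc implies et descent}, \corref{cor:proper descent} (fed by \thmref{thm:DMQ descendability}, i.e.\ by \corref{cor:fet chi qcqs} and \thmref{thm:topinv}), and \propref{prop:h}, which is exactly your chain. Your extra paragraph on passing from qcqs to arbitrary algebraic spaces via the Zariski/Nisnevich sheaf property spells out a point the paper leaves implicit. One correction: Nisnevich descent is an input here, not an a fortiori consequence, because the implication (iii)$\Rightarrow$(i) of \propref{prop:et} underlying \corref{cor:fet desc implies et descent} already assumes it; for $\DM_\Q$ it is the standard Nisnevich descent you invoke at the end.
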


\corref{cor:DMQ descent} is a folklore result which, using $\infty$-categorical machinery, is not difficult to derive from the work of Cisinski--Déglise in \cite{CisinskiDegliseBook} (compare \cite[Thms.~14.3.4 and 16.2.13]{CisinskiDegliseBook}, over quasi-excellent schemes).
In \secref{sec:fsupp} below, we will require the stronger form of descent provided by \thmref{thm:DMQ descendability}.

\ssec{$v$- and arc-descent}

We next turn to the question of descent for the finer $v$- and arc-topologies.

\begin{quest}\label{quest:DMQ v}
  Does the presheaf $\DM_\Q^*$ satisfy $v$-descent (or arc-descent)?
\end{quest}

We do not see any particular reason to believe the answer is positive.
The following conjecture seems more credible.
We write
\[ \Cmot(X; \sF) := \Gamma(X; \sF), \qquad \Hmot^s(X; \sF) := \pi_{-s} \Cmot(X; \sF) \]
for $\sF \in \DM(X)_\Q$ and $s \in \Z$.

\begin{conj}\label{conj:v}
  For every integer $r\in\Z$, the presheaf $\Cmot(-; \Q(r))$ satisfies arc-descent.
\end{conj}

We will take a rather heavy-handed approach to this problem, assuming the following generalized Beilinson--Soul\'e type vanishing conditions for valuation rings, for each $r \in \Z$:

\begin{enumerate}
  \item[$(\mathrm{BS}_{r})$]\label{cond:BSr} There exists an integer $N_r$ such that for every finite rank henselian valuation ring $V$, $\Hmot^m (V; \Q(r)) = 0$ for all $m<-N_r$.
\end{enumerate}

\begin{thm}\label{thm:BS arc}
  Let $r \in \Z$.
  If \BSr holds, then the presheaf $\Cmot(-; \Q(r))$ satisfies $v$- and arc-hyperdescent on algebraic spaces.
\end{thm}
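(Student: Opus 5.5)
We reduce arc-hyperdescent to two separate inputs: $h$-descent, which is already available from \corref{cor:DMQ descent}, and a \emph{finitary} (continuity) property combined with the vanishing \BSr. We use the criterion of Bhatt--Mathew \cite{BhattMathew}. A presheaf of spectra on qcqs schemes satisfies arc-descent if and only if three conditions hold. First, it satisfies $v$-descent, or equivalently $h$-descent plus finitarity. Second, it carries filtered colimits of rings to colimits. Third, it satisfies \emph{aic-$v$-excision}: for every absolutely integrally closed valuation ring $V$ and prime $\mathfrak p$, the square with corners $V$, $V/\mathfrak p$, $V_{\mathfrak p}$ and $\kappa(\mathfrak p)$ is cartesian. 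Hyperdescent will come from a uniform bound on the cohomological amplitude, which is exactly what \BSr provides.

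\emph{$h$-descent and finitarity.} By \corref{cor:DMQ descent}, $\DM_\Q^*$ satisfies $h$-descent, so $\Cmot(-;\Q(r)) = \Gamma(-;\Q(r))$ does too. Continuity of $\DM_\Q$ (\cite[Ex.~A.4]{minus}) shows that $\Cmot(-;\Q(r))$ carries cofiltered limits of qcqs algebraic spaces with affine transitions to colimits, so the presheaf is finitary. Rydh's theorem that every $v$-cover of a qcqs space is refined by a limit of $h$-covers then upgrades \v{C}ech $h$-descent to \v{C}ech $v$-descent, provided the totalizations commute with the filtered colimit. In general that is exactly where the problem lies. The remedy is boundedness: by \BSr and continuity, stalks at henselian valuation rings have cohomology bounded below by $-N_r$, and (at least for finite-dimensional, hence all, by approximation) the relevant cosimplicial objects are uniformly bounded in the appropriate direction. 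A bounded-below totalization commutes with filtered colimits, since only finitely many cosimplicial degrees contribute to each homotopy group. The same uniform bound turns \v{C}ech descent into hyperdescent, because the Postnikov/descent spectral sequence for a hypercover converges when the values are uniformly bounded.

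\emph{Excision for valuation rings.} We check the Milnor-type square for $V$, $\mathfrak p$. The map $\Spec(V/\mathfrak p)\sqcup\Spec(V_{\mathfrak p})\to\Spec V$ is a $v$-cover. We write the square as a filtered colimit of finite-rank henselian pieces, which is legitimate by finitarity, since any valuation ring is a filtered colimit of finite rank ones, and henselian after localizing for the Nisnevich topology. For a finite-rank henselian $V$, the closed/open decomposition $\Spec(V/\mathfrak p)\hookrightarrow\Spec V\hookleftarrow \Spec V\setminus\Spec V/\mathfrak p$ is not quite $\Spec V_{\mathfrak p}$. Instead, we compare the localization triangle in $\DM_\Q$ with the corresponding one for $V_{\mathfrak p}$ and reduce to showing that $i^!$-terms agree. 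Concretely, cdh/excision for the abstract blowup-like square is implied by $h$-descent along the finite-type models, passed to the limit using the bound from \BSr.

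\emph{Expected main obstacle.} The hard step is commuting the totalizations (or hypercover limits) with the filtered colimits in the approximation arguments. Without \BSr, nothing prevents the cohomology from being unbounded, and limit/colimit exchange fails. The amplitude must therefore be controlled uniformly over all finite-rank henselian valuation rings. We expect to deduce it from \BSr via a Nisnevich-local (henselian stalk) descent spectral sequence, together with the dimension bound of \remref{rem:descendability index bound} to make the $h$-descent totalizations finite-length.
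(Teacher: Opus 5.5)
The genuine gap is in the aic-$v$-excision step.

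Your claim that $\Spec(V/\mathfrak p)\sqcup\Spec(V_{\mathfrak p})\to\Spec V$ is a $v$-cover is false whenever $0\neq\mathfrak p\neq\mathfrak m$. An extension $V\hookrightarrow W$ of valuation rings is faithfully flat, so $\Spec W\to\Spec V$ is surjective. Since $\Spec W$ is connected, it cannot lift into either summand: $\Spec V/\mathfrak p$ misses the generic point and $\Spec V_{\mathfrak p}$ misses the closed point. The map is only an arc-cover, which is exactly why excision enters \cite[Thm.~4.1]{BhattMathew} as a separate condition.

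Your fallback breaks down for the same reason. For $V$ of finite rank, $\Spec V_{\mathfrak p}=\Spec V[1/s]$, with $s$ in the prime immediately above $\mathfrak p$ but not in $\mathfrak p$, is a quasi-compact open. So the square is an open/closed square, not an abstract blowup square. Its finitely presented models $\Spec V[1/s]\sqcup\Spec V/(a)\to\Spec V$, with $0\neq a\in\mathfrak p$, are again not $v$-covers, hence not $h$-covers. Thus \corref{cor:DMQ descent} gives nothing here, with or without \BSr.

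Your reduction to agreement of the $i^!$-terms is also the wrong criterion, as stated. The two localization triangles share the same open part, so agreement would force $\Cmot(V;\Q(r))\simeq\Cmot(V_{\mathfrak p};\Q(r))$. This already fails for $r=1$, where $\Hmot^1(-;\Q(1))$ of a henselian valuation ring is its unit group tensored with $\Q$.

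The missing idea is that the square is a Milnor square, $V\simeq V_{\mathfrak p}\times_{\kappa(\mathfrak p)}V/\mathfrak p$ \cite[Prop.~2.8]{BhattMathew}. What you need is Milnor excision for $\Cmot(-;\Q(r))$, a non-formal input that is independent of \BSr. The paper takes it from \cite[Cor.~1.2]{EHIKCrelle}. Alternatively, one can use Weibel's excision for homotopy $K$-theory together with the natural Adams summand $\KGL_\Q^{(r)}\simeq\Q(r)[2r]$ of $\KGL_\Q$.

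Apart from this, your outline matches the paper. \BSr gives a uniform coconnectivity bound for $\Cmot(X;\Q(r))$ (\lemref{lem:BS propagate}). This makes the sheaf and hypersheaf conditions coincide. It also lets the totalization commute with the filtered colimit when a $v$-cover of affines is written as a limit of $h$-covers (\lemref{lem:colim Tot}). Two corrections are needed there.

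First, \BSr controls only henselian valuation rings, which are the points of the cdh topos rather than the Nisnevich topos. So the propagation to global sections has to use cdh descent, not a Nisnevich spectral sequence. No dimension-dependent input such as \remref{rem:descendability index bound} is needed, nor is it available, since that index is not uniform over infinite-dimensional bases. Finite valuative dimension of the finite type models is used only to conclude that a cdh sheaf with vanishing stalks vanishes. The resulting bound $N_r$ then passes to all qcqs schemes, because coconnectivity is stable under limits and filtered colimits.

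Second, the bound is also what allows you to apply \cite[Thm.~4.1]{BhattMathew} at all. That criterion requires the target to be compactly generated by cotruncated objects, here $N_r$-coconnective spectra. It is not a criterion for arbitrary presheaves of spectra, as you stated it.
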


Unconditionally, we deduce:

\begin{cor}\label{cor:Cmot arc low weights}
  For $r \le 1$, the presheaf $\Cmot(-; \Q(r))$ satisfies $v$- and arc-hyperdescent.
\end{cor}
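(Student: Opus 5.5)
By \thmref{thm:BS arc}, it suffices to verify \BSr for every $r \le 1$. That is, for each such $r$ we need a uniform bound $N_r$ with $\Hmot^m(V; \Q(r)) = 0$ for all $m < -N_r$ and every finite rank henselian valuation ring $V$. I would treat the weights $r < 0$, $r = 0$ and $r = 1$ separately, using the standard identification of low-weight rational motivic cohomology.

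For $r \le 0$ the plan rests on an explicit description of $\Q(r)$.
\begin{itemize}
\item For $r < 0$, I expect $\Q(r)$ to have no global sections in negative degrees. On regular schemes this is Voevodsky's vanishing of negative-weight motivic cohomology. For general $V$ I would argue by $h$-descent, which holds for $\DM_\Q$ by \corref{cor:DMQ descent}, together with continuity. A valuation ring $V$ is a filtered colimit of finitely presented $\Z$-algebras, and these admit $h$-hypercovers by regular schemes via de Jong alterations. Rationally, $\Cmot(-;\Q(r))$ vanishes on regular schemes, so it vanishes on all qcqs schemes by descent plus continuity. This gives \BSr with any $N_r$, since the complex is zero.
\item For $r = 0$, I would identify $\Cmot(X; \Q(0))$ with $h$-sheafified (equivalently cdh, rationally) locally constant $\Q$-valued functions. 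This is again obtained by $h$-descent from the regular case, where $\Q(0) = \Q$ as a motivic complex concentrated in degree $0$. On a valuation ring, whose spectrum is local and irreducible, this cohomology is concentrated in degree $0$ and equals $\Q$. The reason is that valuation rings are local for the $h$- and $v$-topologies, so the hypercohomology collapses. Hence $N_0 = 0$ works.
\end{itemize}

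For $r = 1$, I would use $\Q(1) \simeq \mathbb{G}_m \otimes \Q[-1]$ on regular schemes. By $h$-descent this extends to $\Cmot(X;\Q(1)) \simeq R\Gamma_h(X; \mathbb{G}_m\otimes\Q)[-1]$, and for qcqs schemes the $h$-sheaf cohomology can be computed cdh-locally. On a henselian valuation ring $V$, which is a point for the cdh/$h$-topology up to radicial extensions (rationally harmless by topological invariance, \thmref{thm:topinv}), this complex is $V^\times\otimes\Q[-1]$. It is therefore concentrated in degree $1$, so $\Hmot^m(V;\Q(1))=0$ for $m<1$, and $N_1 = 0$ suffices.

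The main obstacle is justifying the passage from regular schemes to arbitrary valuation rings. Concretely, one must show that $\Cmot(-;\Q(r))$ for $r \le 1$ agrees with the $h$-sheafification of the classical description, and that finite rank henselian valuation rings have no higher $h$-cohomology for $\mathbb{G}_m\otimes\Q$ and $\Q$. I would first try to reduce, via continuity, to valuation rings that are filtered colimits of smooth $\Z$- or $\mathbf{F}_p$-algebras. In residue characteristic $0$ this is local uniformization; in mixed or positive characteristic one passes through perfections, using topological invariance. If this route fails, I would instead invoke known computations of $v$-/arc-local cohomology of $\mathbb{G}_m\otimes\Q$ (Bhatt–Mathew) to get the vanishing directly.
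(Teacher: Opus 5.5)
Your reduction to \BSr via \thmref{thm:BS arc} is the paper's first step, and your target bounds ($N_0=N_1=0$, any $N_r$ for $r<0$) are the right ones. But the step that carries all the content is never established: a uniform bound on $\Cmot(-;\Q(r))$, $r\le 1$, over \emph{arbitrary} finite rank henselian valuation rings, including singular and mixed-characteristic ones. You flag it yourself as the ``main obstacle''.

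Concretely, the argument ``vanishes (resp.\ has the classical form) on regular schemes, hence everywhere by $h$-descent and de Jong'' needs descent along $h$-\emph{hypercovers} with regular terms. This is because regular schemes are not closed under fibre products: the \v{C}ech nerve of an alteration $Y\to X$ is not regular. However, \corref{cor:DMQ descent} only gives \v{C}ech $h$-descent, and a \v{C}ech sheaf is determined by its restriction to such a basis only up to hypersheafification. In the paper, the upgrade from \v{C}ech descent to hyperdescent comes precisely from coconnectivity (\lemref{lem:BS propagate}), i.e.\ from \BSr itself, so invoking it here would be circular.

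The same defect affects your weight $0$ and $1$ identifications with $h$-cohomology. The further claim that this $h$-cohomology ``can be computed cdh-locally'' is left unjustified. Also, henselian valuation rings are cdh-points but not $h$-points; reaching the latter needs separable extensions, not only radicial ones. Your fallback via local uniformization fails exactly where it is needed: it is not available in mixed characteristic, where ``passing through perfections'' makes no sense.

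The missing ingredient is a description of low-weight motivic cohomology valid on all qcqs schemes at once, which is what the paper cites.
For $r<0$, $\Q(r)\simeq \KGL_{\Q}^{(r)}[-2r]$ is the rationalized slice $s^r\KGL$ (Bachmann--Elmanto--Morrow). Moreover $\Maps_{\SH(U)}(\un_U, s^r\KGL)\simeq 0$ by construction of the slice filtration, since $\un_U$ is effective and $r+1\le 0$. Hence $\Cmot(U;\Q(r))\simeq 0$ for every qcqs $U$.
For $r=0,1$, Bachmann--Elmanto--Morrow give $\Cmot(U;\Q)\simeq\Gamma_{\cdh}(U,\Q)$ and $\Cmot(U;\Q(1))\simeq\Gamma_{\cdh}(U;\bG_m\otimes\Q)[-1]$ for every qcqs $U$. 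These are cohomologies of discrete sheaves, so $\Hmot^s$ vanishes for $s<0$, resp.\ $s\le 0$. No resolution of singularities, hypercovers, or stalk computations at valuation rings are needed.
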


\begin{proof}
  By \thmref{thm:BS arc} it suffices to verify \BSr for $r\le 1$.
  In fact, for every qcqs scheme $U$ we have:
  \begin{enumerate}
    \item 
    For $r<0$, $\Hmot^s(U; \Q(r)) \simeq 0$ for all $s\in\Z$.
    In the stated generality, this may be seen as follows: the Tate twist $\Q(r)$ is identified with $\KGL_\Q^{(r)}[-2r]$, the shifted $r$-th Adams eigenspace of $\KGL_\Q$ (see \cite{Riou}, \cite[Lem.~14.1.4]{CisinskiDegliseBook}).
    By \cite[Cor.~4.52(2), Rem.~4.22]{BachmannElmantoMorrow}, this is the rationalization of the $r$-th slice $s^r \KGL$, and $\Gamma(U, s^r \KGL) := \Maps_{\SH(U)}(\un_U, s^r \KGL) \simeq 0$ holds by construction of the slice filtration.

    \item
    For $r = 0$, $\Cmot(U; \Q) \simeq \Gamma_{\cdh}(U, \Q)$ is the cdh hypercohomology of the constant sheaf $\Q$ (see e.g. \cite[Thm.~1.1(6)]{BachmannElmantoMorrow}), hence $\Hmot^{s}(U; \Q) = 0$ for $s < 0$.
    
    \item
    For $r = 1$, $\Cmot(-; \Q(1)) \simeq \Gamma_{\cdh}(-; \bG_m \otimes \Q)[-1]$ (see e.g. \cite[Thm.~1.1(6)]{BachmannElmantoMorrow}), hence $\Hmot^{s}(U; \Q(1)) \simeq \mathrm{H}^{s-1}_{\cdh}(U; \bG_m)_{\Q} \simeq 0$ for $s \le 0$ since $\bG_m$ is discrete.\qedhere
  \end{enumerate}
\end{proof}

The proof of \thmref{thm:BS arc} will make use of the following lemmas.

\begin{lem}\label{lem:BS propagate}
  Let $r \in \Z$.
  The condition \BSr is equivalent to the existence of an integer $N_r$ such that $\Cmot(X; \Q(r))$ is $N_r$-coconnective for every qcqs scheme $X$.
\end{lem}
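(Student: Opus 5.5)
The implication from the uniform bound to \BSr is immediate, because a finite rank henselian valuation ring $V$ gives an affine, hence qcqs, scheme $\Spec(V)$. The work is in the converse. Assume \BSr with constant $N_r$. The plan is to show that $\Cmot(X; \Q(r))$ is $N_r$-coconnective, with the \emph{same} $N_r$, for every qcqs scheme $X$, by three successive reductions.

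\emph{Reduction to affines.} First reduce to affine $X$. A qcqs scheme has a finite affine open cover whose pairwise intersections are again qcqs. Zariski descent (indeed Nisnevich descent, part of \corref{cor:DMQ descent}) writes $\Cmot(X; \Q(r))$ as a finite-stage \v{C}ech limit of the values on such schemes. Limits of $N_r$-coconnective spectra are $N_r$-coconnective, so no dimension-dependent shift occurs. Running the argument once more for the qcqs (separated) intersections reduces everything to affine schemes.

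\emph{Reduction to finite type over $\Z$.} Write an affine $X = \Spec(A)$ as the cofiltered limit of $X_\alpha = \Spec(A_\alpha)$, where the $A_\alpha$ are the finitely generated $\Z$-subalgebras of $A$. Continuity of $\DM_\Q$ gives
\[
  \Cmot(X; \Q(r)) \simeq \colim_\alpha \Cmot(X_\alpha; \Q(r)).
\]
Filtered colimits of $N_r$-coconnective spectra are $N_r$-coconnective. So we may assume $X$ is of finite type over $\Z$, in particular noetherian and finite-dimensional.

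\emph{The noetherian finite-dimensional case.} By \corref{cor:DMQ descent}, $\Cmot(-; \Q(r))$ satisfies $h$-descent, in particular cdh descent. Hence $\Cmot(X; \Q(r)) \simeq \RGamma_{\cdh}(X; \sC)$, where $\sC$ is the cdh sheafification of the presheaf $\Cmot(-; \Q(r))$ on finite type $X$-schemes. Since $X$ is noetherian of finite Krull dimension, the cdh site of $X$ has finite cohomological dimension. Therefore hyperdescent holds and the descent spectral sequence
\[
  E_2^{p,q} = \H^p_{\cdh}\bigl(X; \underline{\H}^q(\sC)\bigr) \Rightarrow \Hmot^{p+q}\bigl(X; \Q(r)\bigr)
\]
converges, with $p \ge 0$. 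It therefore suffices to show that the cohomology sheaves $\underline{\H}^q(\sC)$ vanish for $q < -N_r$, which can be checked on stalks.

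By Goodwillie--Lichtenbaum and Gabber--Kelly, the cdh points of $X$ are spectra of henselian valuation rings $V$ of finite rank. The stalk of $\sC$ at such a point is $\colim \Cmot(U; \Q(r))$ over cdh neighbourhoods $U$. This colimit should be identified with $\Cmot(\Spec V; \Q(r))$ by continuity again, since $V$ is the corresponding filtered limit of finite type schemes with affine transitions. Thus the stalkwise vanishing is exactly \BSr, and the positive-degree spread $p \ge 0$ of the spectral sequence only increases degrees. Hence $\Hmot^m(X; \Q(r)) = 0$ for $m < -N_r$.

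\emph{Main obstacle.} The delicate step is the identification of cdh stalks with values on henselian valuation rings. This needs the structure of cdh points, together with continuity in a form that applies to limits indexed by cdh neighbourhoods rather than just affine approximations. If that identification is awkward, I would instead use an ascending induction on dimension. Combine cdh (abstract blow-up) squares with the localization triangle and the already established $h$-descent to reduce to local rings, then pass to the limit over blow-ups. This is a Riemann--Zariski style argument presenting $\Spec(V)$ as the limit, and again only limits and filtered colimits of coconnective objects are involved.
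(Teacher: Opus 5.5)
Your proof is correct and takes essentially the paper's route. You reduce by continuity to schemes of finite type over $\Z$, then use that the cdh topos there is hypercomplete with enough points given by finite-rank henselian valuation rings, at which \BSr gives $N_r$-coconnectivity; the paper phrases this as vanishing of the cdh connective cover $\tau^{\cdh}_{\ge N_r+1}$ instead of a descent spectral sequence. The stalk identification you flag as the main obstacle is handled by continuity exactly as in the paper, since the cdh neighbourhoods of the point form a cofiltered system with limit $\Spec(V)$; your preliminary reduction to affines is unnecessary.
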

\begin{proof}
  Set $F := \Cmot(-; \Q(r))$ and let $N_r$ be an integer such that $F(V)$ is $N_r$-coconnective for every henselian valuation ring $V$ of finite rank.
  Since $\DM_\Q$ satisfies continuity and cdh descent, the same holds for $F$.
  Throughout the following, we regard $F$ as a presheaf on the site of qcqs schemes, or equivalently by continuity, on the site of schemes finitely presented over $\Spec(\bZ)$ (see \cite[Prop.~2.1.6]{EHIKMathAnn}).

  Let $K := \tau_{\ge N_r+1}(F)$ denote the objectwise $(N_r+1)$-connective cover of $F$, whose cdh sheafification $K_{\mrm{cdh}} := \tau_{\ge N_r+1}^{\mrm{cdh}}(F)$ is the connective cover in cdh sheaves of spectra.
  Since $\tau_{\ge N_r+1}$ commutes with filtered colimits of spectra, $K$ still satisfies continuity, and hence so does $K_{\mrm{cdh}}$ by \cite[Prop.~2.15]{BachmannElmantoMorrow}.

  By noetherian approximation, any qcqs scheme $X$ may be written as the limit of a cofiltered system of schemes of finite type over $\Z$ with affine transition maps.
  Fix an index $\alpha$ and a finite rank henselian valuation ring $V$ over $X_\alpha$.
  Since henselian valuation rings are the points of the cdh topos (see \cite[\S 2.1]{EHIKMathAnn}), we have $K_{\mrm{cdh}}(V) \simeq K(V) \simeq \tau_{\ge N_r+1} F(V)$, which vanishes by \BSr.
  Since $X_\alpha$ is of finite valuative dimension (see \cite[Prop.~2.3.2 (9)]{EHIKMathAnn}), we may now apply \cite[Cor.~2.4.19]{EHIKMathAnn} to deduce that $K_{\mrm{cdh}}|_{X_\alpha}$ vanishes as a cdh sheaf on schemes over $X_\alpha$.
  In particular, $K_{\mrm{cdh}}(X_\alpha) \simeq 0$.
  Since this holds for all $\alpha$, the continuity of $K_{\mrm{cdh}}$ now implies that $K_{\mrm{cdh}}(X) \simeq 0$ for every qcqs scheme $X$.

  Consider now the exact triangle $\tau_{\ge N_r+1}^{\mrm{cdh}} (F) \to F \to \tau_{\le N_r}^{\mrm{cdh}} (F)$ of cdh sheaves of spectra.
  By the previous paragraph, we have $F(X) \simeq \tau_{\le N_r}^{\mrm{cdh}} F(X)$, which is $N_r$-truncated.
\end{proof}

\begin{lem}\label{lem:colim Tot}
  Let $\sC$ be a presentable stable \inftyCat equipped with a $t$-structure $(\sC_{\ge 0}, \sC_{\le 0})$ which is compatible with filtered colimits and right-separated.
  Let $(M_\alpha^\bullet)_\alpha$ be a filtered diagram of cosimplicial objects of $\sC$ and $N$ an integer such that $M_\alpha^n \in \sC_{\le N}$ for all $\alpha$ and $[n] \in \bDelta$.
  Then the canonical morphism
  \begin{equation}\label{eq:colim Tot}
    \colim_\alpha \Tot(M_\alpha^\bullet) \to \Tot\bigl(\colim_\alpha M_\alpha^\bullet\bigr)
  \end{equation}
  is invertible.
\end{lem}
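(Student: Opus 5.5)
We prove the claim by showing that both sides of \eqref{eq:colim Tot} have the same homotopy objects in each degree. Right-separatedness then gives invertibility: the cofibre is an object $C$ with $\pi_k C = 0$ for all $k$, so it lies in $\bigcap_n \sC_{\le -n}$, hence $C \simeq 0$.

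Fix an integer $k$. Let $\Tot_{\le m}$ denote the $m$-th partial totalization, a finite limit. The first step is to compare $\Tot$ with $\Tot_{\le m}$ for large $m$, uniformly in $\alpha$. The fibre of $\Tot(M^\bullet) \to \Tot_{\le m}(M^\bullet)$ is the limit of the successive fibres $\Fib(\Tot_{\le n}\to\Tot_{\le n-1})$ for $n>m$. Each such fibre is $\Omega^{n}$ of the $n$-th normalized (matching) term of $M^\bullet$. That normalized term is a finite limit of the $M^j$, $j \le n$, and hence lies in $\sC_{\le N}$. So the successive fibre lies in $\sC_{\le N-n}$.

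The second step passes to the limit. Since $\sC_{\le N-m-1}$ is closed under limits, the fibre of $\Tot \to \Tot_{\le m}$ lies in $\sC_{\le N-m-1}$. Consequently $\pi_k\Tot(M^\bullet) \to \pi_k \Tot_{\le m}(M^\bullet)$ is an isomorphism once $m \ge N-k+1$, say. This applies to every $M_\alpha^\bullet$. It also applies to $\colim_\alpha M_\alpha^\bullet$, because compatibility with filtered colimits means $\sC_{\le N}$ is closed under filtered colimits, so the colimit is again levelwise in $\sC_{\le N}$.

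The third step commutes the colimit with the finite partial totalization. In a stable $\infty$-category, filtered colimits commute with finite limits, so
\[\colim_\alpha \Tot_{\le m}(M_\alpha^\bullet) \simeq \Tot_{\le m}(\colim_\alpha M^\bullet_\alpha).\]
Compatibility with filtered colimits also gives $\pi_k \colim_\alpha \simeq \colim_\alpha \pi_k$. Putting these together,
\[\pi_k \colim_\alpha \Tot(M_\alpha^\bullet) \simeq \colim_\alpha \pi_k\Tot_{\le m}(M_\alpha^\bullet) \simeq \pi_k \Tot_{\le m}(\colim_\alpha M_\alpha^\bullet) \simeq \pi_k \Tot(\colim_\alpha M_\alpha^\bullet).\]
These isomorphisms are compatible with \eqref{eq:colim Tot} by naturality of the maps $\Tot \to \Tot_{\le m}$.

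The main obstacle is the first two steps. One must verify carefully that the fibres of the Tot tower are the shifted normalized terms and that these stay in $\sC_{\le N}$. One must also check that a limit of a tower whose terms lie in $\sC_{\le N-m-1}$ remains there; this holds because the coconnective part is always closed under limits, being a right orthogonal.
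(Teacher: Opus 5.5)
Your overall structure is the same as the paper's: compare $\Tot$ with $\Tot_{\le m}$ on $\pi_k$ uniformly in $\alpha$ for $m \ge N-k+1$, commute the filtered colimit past the finite limit $\Tot_{\le m}$ and past $\pi_k$, and conclude with right-separatedness. The last step, however, does not follow as you state it.

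\textbf{The gap.} You claim that an object $C$ with $\pi_k C = 0$ for all $k$ lies in $\bigcap_n \sC_{\le -n}$, but this is false in general. Any object of $\bigcap_n \sC_{\ge n}$ has all homotopy objects zero. The $t$-structure is only assumed right-separated, not left-separated, so such $\infty$-connective objects need not vanish. For example, nonzero $\infty$-connective sheaves of spectra exist on a non-hypercomplete $\infty$-topos with the standard $t$-structure.

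What is missing is the observation that $C$ is bounded above. The paper makes this explicit, and you should add it:
\begin{itemize}
\item $\Tot(M_\alpha^\bullet)$ and $\Tot(\colim_\alpha M_\alpha^\bullet)$ lie in $\sC_{\le N}$, since $\sC_{\le N}$ is closed under limits.
\item $\colim_\alpha \Tot(M_\alpha^\bullet)$ lies in $\sC_{\le N}$ by compatibility with filtered colimits.
\item Hence the fibre of the comparison map lies in $\sC_{\le N}$, and its cofibre in $\sC_{\le N+1}$.
\item Downward induction then applies: if $C \in \sC_{\le M}$ and $\pi_M C = 0$, then $C \in \sC_{\le M-1}$. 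So $C \in \bigcap_n \sC_{\le -n}$, and right-separatedness gives $C \simeq 0$.
\end{itemize}
You already use both ingredients (closure of $\sC_{\le N}$ under limits and under filtered colimits) in your second step, so the fix is one sentence. It must be stated, though, since it is exactly where right-separatedness, rather than left-separatedness, is the relevant hypothesis.

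\textbf{A minor imprecision.} The fibre of $\Tot \to \Tot_{\le m}$ is not the limit of the successive fibres themselves. It is the limit of the tower $\Fib(\Tot_{\le n} \to \Tot_{\le m})$, $n > m$, each term of which is an iterated extension of successive fibres. Closure of $\sC_{\le N-m-1}$ under extensions and limits then gives your conclusion.
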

\begin{proof}
  For any cosimplicial object $M^\bullet$, the fibre of $\Tot_{\le m}(M^\bullet) \to \Tot_{\le m-1}(M^\bullet)$ is $M'[-m]$ for $M'$ a direct summand of $M^m$ (by the dual of \cite[Rem.~1.2.4.3]{LurieHA}).
  If $M^n \in \sC_{\le N}$ for all $n \in \bDelta$, it follows that the fibre of $\Tot_{\le m}(M^\bullet) \to \Tot_{\le m'}(M^\bullet)$ lies in $\sC_{\le N-m'-1}$ for all $m > m' \ge 0$.
  Passing to the limit over $m$, the same holds for the fibre of $\Tot(M^\bullet) \to \Tot_{\le m'}(M^\bullet)$ (since coconnectivity is stable under limits).
  In particular $\Tot(M^\bullet) \in \sC_{\le N}$ and $\pi_k \Tot(M^\bullet) \to \pi_k \Tot_{\le m'}(M^\bullet)$ is invertible whenever $m' \ge N - k + 1$.

  The source and target of \eqref{eq:colim Tot} both lie in $\sC_{\le N}$, since $\sC_{\le N}$ is closed under limits and filtered colimits.
  Fix $k$ and set $m' = N - k + 1$.
  Since $\Tot_{\le m'}$ commutes with filtered colimits (as a finite limit), and $\pi_k$ commutes with filtered colimits by assumption, we have
  \[
    \pi_k \bigl( \colim_\alpha \Tot(M_\alpha^\bullet) \bigr)
    \simeq \colim_\alpha \pi_k \Tot_{\le m'}(M_\alpha^\bullet)
    \simeq \pi_k \Tot_{\le m'}\bigl(\colim_\alpha M_\alpha^\bullet\bigr)
    \simeq \pi_k \Tot\bigl(\colim_\alpha M_\alpha^\bullet\bigr),
  \]
  compatibly with the canonical morphism, where the last isomorphism uses $\colim_\alpha M_\alpha^p \in \sC_{\le N}$.
  The fibre of \eqref{eq:colim Tot} therefore lies in $\sC_{\le N}$ and has vanishing homotopy objects; in particular it lies in $\bigcap_n \sC_{\le -n}$, hence is zero by the right-separatedness assumption.
\end{proof}

\begin{proof}[Proof of \thmref{thm:BS arc}]
  Condition~\BSr implies that the presheaf $\Cmot(-; \Q(r))$ takes values in $N_r$-coconnective spectra (\lemref{lem:BS propagate}).
  Hence by \cite[Lem.~3.1.7 (2), Ex.~3.1.6]{EHIKMathAnn}\footnote{%
    Following \cite[Def.~4.16]{BhattMathew}, we define the $v$- and arc-toposes of sheaves on the (non-essentially-small) site of qcqs schemes by fixing an uncountable strong limit cardinal $\kappa$ and restricting to $\kappa$-bounded qcqs schemes as in \emph{loc. cit}.
    Since $\Cmot(-;\Q(r))$ is continuous, the statement is independent of $\kappa$.
  }, it is a $v$- or arc-hypersheaf if and only if it is a $v$- or arc-sheaf.
  
  Let $f : Y \twoheadrightarrow X$ be a $v$-cover of algebraic spaces.
  The claim is étale-local on $X$ by \corref{cor:etQ} and smooth base change, so we may assume $X$ affine.
  Since $f$ is quasi-compact, $Y$ is quasi-compact and hence admits an étale surjection $p : Y' \twoheadrightarrow Y$ with $Y'$ affine.
  By \v{C}ech descent for $p$ and its base changes, \cite[Lem.~3.1.2\,(3),(4)]{LiuZheng} reduces \v{C}ech descent along $f$ to \v{C}ech descent along $g := f \circ p$.
  Thus we may assume $X$ and $Y$ are affine.
  By \cite[Lem.~2.12]{BhattScholze}, $f$ is then the limit of a cofiltered diagram of $h$-covers $(f_\alpha : Y_\alpha \to X)_\alpha$ with affine transition maps.
  By continuity (\cite[Rem.~2.76]{weaves}; see \cite[Exs.~A.4, A.9]{minus}) the canonical maps
  \[\colim_\alpha \Cmot(Y_{\alpha,n}; \Q) \to \Cmot(Y_n; \Q)\]
  are invertible for each $[n]\in\bDelta$.
  By \v{C}ech descent along each $f_\alpha$ (\corref{cor:DMQ descent}) it will suffice to show that the filtered colimit on the left-hand side commutes with the totalization over $[n]\in\bDelta$.
  This follows from \lemref{lem:colim Tot} in view of the fact that $\Cmot(-; \Q(r))$ takes values in $N_r$-coconnective spectra.

  For arc-descent, we apply the criterion of \cite[Thm.~4.1]{BhattMathew} to $\Cmot(-; \Q(r))$ regarded as a presheaf with values in $N_r$-coconnective spectra; this \inftyCat is compactly generated by cotruncated objects (see \cite[Ex.~3.1.6]{EHIKMathAnn}).
  Since $\Cmot(-; \Q(r))$ satisfies continuity and $v$-descent, it remains to check that for every absolutely integrally closed valuation ring $V$ and every prime ideal $\mathfrak{p} \sub V$, the square
  \[\begin{tikzcd}
  \Cmot(V; \Q(r)) \ar[r] \ar[d] & \Cmot(V/\mathfrak{p}; \Q(r)) \ar[d] \\
  \Cmot(V_{\mathfrak{p}}; \Q(r)) \ar[r] & \Cmot(\kappa(\mathfrak{p}); \Q(r))
  \end{tikzcd}\]
  is cartesian.
  By \cite[Prop.~2.8]{BhattMathew}, this is a special case of Milnor excision for $\Cmot(-; \Q(r))$ (see \cite[Cor.~1.2]{EHIKCrelle}).
\end{proof}

\section{Étale motivic spectra}

Let $X$ be an algebraic space.
We denote by $\SH_{\et}(X)$ the \inftyCat of étale motivic spectra over $X$; that is, the full subcategory of $\SH(X)$ spanned by motivic spectra satisfying étale hyperdescent.
The assignment $X \mapsto \SH_\et(X)$ forms a topological weave satisfying étale hyperdescent (e.g. by \cite[Prop.~2.3.2]{weavelisse}).
We write $\sS$ for the category of noetherian algebraic spaces of finite Krull dimension, $\sS_0$ for the full subcategory of those whose residue fields have uniformly bounded virtual étale cohomological dimension, and $\sS_1 \sub \sS_0$ for the further subcategory of those whose residue fields have uniformly bounded étale cohomological dimension.

\ssec{Descendability and $h$-descent}

Our main result is as follows:

\begin{thm}\label{thm:SHet descendable}
  For $X \in \sS_1$, every finite type surjection $f : Y \twoheadrightarrow X$ is descendable in $\SH_\et$.
  For $X \in \sS_0$, every finite type surjection $f : Y \twoheadrightarrow X$ is descendable in $\SH_\et[1/2]$.
\end{thm}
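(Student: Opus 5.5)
We apply \thmref{thm:descendability}. $\SH_\et$ and $\SH_\et[1/2]$ are topological weaves, so they satisfy localization. It therefore suffices to show that, over $X \in \sS_1$ (resp.\ $\sS_0$), every finite étale surjection and every finite radicial surjection is descendable. Both classes are stable under base change to locally closed subspaces, and the bootstrapping argument only uses base changes of $f$ to strata of $X$ and to finite covers of strata. All these spaces remain in $\sS_1$ (resp.\ $\sS_0$), since their residue fields are finite extensions of residue fields of $X$. So the reduction stays within the relevant category.

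\emph{Finite radicial surjections.} By étale hyperdescent, $\SH_\et$ satisfies rigidity-type properties. Let $p$ be a prime that is not invertible on $X$. On the part of $X$ of characteristic $p$, $\SH_\et$ is $p$-inverted, because étale motivic spectra in characteristic $p$ are $\Z[1/p]$-linear (Artin--Schreier / Bachmann's result). Using the localization triangle and \lemref{lem:descendability and localization}, we split $X$ into characteristic strata. Finitely many primes occur, since $X$ is noetherian of finite dimension. On each stratum, every prime is invertible either in $\sO$ or in the coefficients. \thmref{thm:topinv} then shows that $f^*$ is an equivalence, so $f$ is descendable of index $\le 1$.

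\emph{Finite étale surjections.} By \lemref{lem:fet desc torsor} we reduce to $G$-torsors $g : Y \to X$ for finite groups $G$. Then $g_*\un_Y$ is a $G$-Galois extension of $\un_X$. Descendability amounts to the Tate construction $\un_X^{tG}$ vanishing in a uniform (nilpotent) sense. One can reduce further to $G$ cyclic of prime order $\ell$: for an $\ell$-Sylow $P$, the index $[G:P]$ is invertible on $\un_X^{tG}$ after transfer, so we may assume $G$ is an $\ell$-group, which is an iterated extension of $\Z/\ell$. Then compose descendable maps. For $\Z/\ell$, after inverting $\ell$ the torsor has invertible Euler characteristic, by \corref{cor:fet chi approx}, since $\ell_\varepsilon$ becomes invertible. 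It remains to treat $\ell$-completed étale motivic spectra. By rigidity (Bachmann), these are equivalent to $\ell$-complete étale sheaves of spectra on $X[1/\ell]$. Here the bound on étale cohomological dimension gives finite $\ell$-cohomological dimension for $X$, using finite Krull dimension and the Gabber-type bound $\mathrm{cd}_\ell(X) \le \dim X + \sup \mathrm{cd}_\ell(\kappa(x))$. Mathew's theorem then applies: in sheaves of spectra on a site of finite cohomological dimension, Galois covers are descendable with index bounded in terms of the dimension. The proof is via the Postnikov/descent spectral sequence having a horizontal vanishing line, which forces $\varepsilon^N = 0$ on $I^{\otimes N}$. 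Finally, $\un_X$ is the pullback of $\un_X[1/\ell]$ and its $\ell$-completion over the rational part. A cofibre argument in the style of \lemref{lem:descendable local abstract} glues the three descendability statements, since the index bounds add under extensions.

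In $\sS_0$ only virtual cohomological dimension is bounded, and the case $\ell = 2$ fails: real points give infinite $2$-cohomological dimension. After inverting $2$, only odd $\ell$ matter, and for these virtual and actual cohomological dimensions agree. So the same argument gives the $\SH_\et[1/2]$ statement.

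The main obstacle is the $\ell$-complete étale step. We must show that a uniform bound on cohomological dimension yields a \emph{uniform} vanishing line. Hypercompleteness must be matched with finite dimension so that the Tate construction vanishes with bounded index, uniformly over stalks.
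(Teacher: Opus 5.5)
Your skeleton is the paper's: bootstrapping via \thmref{thm:descendability}, topological invariance for radicial covers, reduction to $G$-torsors, a $[1/d]$-part handled by Euler characteristics, a $p$-complete part handled by Bachmann--Hoyois, rigidity and cohomological dimension, and inverting $2$ on $\sS_0$. However, two steps do not go through.

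\emph{The radicial step and the missing stalk input.} You claim that only finitely many primes fail to be invertible on a noetherian finite-dimensional $X$. This is false: $\Spec\Z[i]\in\sS_1$ has every prime as a residue characteristic, and its characteristic-zero locus is not locally closed. So no finite localization d\'evissage puts you in the hypothesis of \thmref{thm:topinv}. What is missing is conservativity of stalks for $\SH_\et$ on $\sS_0$ (\lemref{lem:SHet stalks}). The paper obtains it from Bachmann's results after passing to the \'etale cover that adjoins $\sqrt{-1}$ away from $2$ and a primitive cube root of unity away from $3$; the residue fields of this cover have bounded $\mathrm{cd}$ by Artin--Schreier and Serre. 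Stalk conservativity reduces conservativity of $f^*$ to residue fields. There Bachmann--Hoyois makes $p$ invertible in characteristic $p$, so \thmref{thm:topinv} applies, and \lemref{lem:frad} then upgrades conservativity to an equivalence. (Alternatively, you could test the cofibre of $\id\to f_*f^*$ after $\ell$-localization one prime at a time. In that setting the closed--open decomposition of $X$ into $X_{\bF_\ell}$ and $X[1/\ell]$ does suffice.) Your Euler-characteristic step needs the same input. \corref{cor:fet chi approx} presupposes continuity and geometric generation, which you have not justified for $\SH_\et$; the paper instead uses \corref{cor:fet chi stalkwise} together with \lemref{lem:SHet stalks}. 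Nor is invertibility of $\ell_\varepsilon$ in $\SH_\et(\kappa)[1/\ell]$ automatic: $2_\varepsilon=1+\langle -1\rangle$ is not a unit in $\on{GW}(\mathbf{R})[1/2]$. \lemref{lem:SHet deps} uses \'etale descent along $\kappa(\sqrt{-1})$ to force $\langle -1\rangle=1$.

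\emph{The $p$-complete step.} This is the heart of the theorem, and you defer it to an unspecified theorem of Mathew. The two issues you flag as ``the main obstacle'' are exactly what must be proved. The paper first shows (\lemref{lem:coh dim heart}) that $\Sigma^\infty_+U$, for $U\to X$ finitely presented \'etale, has cohomological dimension $\le c$ against \emph{every} object of $\sC^\heartsuit$, not only torsion sheaves. For a derived $p$-complete $\sG$ the only possible obstruction is $\lim\nolimits^1_k H^c_\et(U;\sG/p^k\sG)$ in degree $c+1$. It vanishes by Mittag--Leffler, because $H^{c+1}_\et$ of the $p$-torsion sheaf $p^k\sG/p^{k+1}\sG$ is zero. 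Next, let $F_i$ be the fibre of $\abs{\on{sk}_i\Sigma^\infty_+Y_\bullet}\to\un$. The skeletal filtration shows that $F_i$ has cohomological dimension $\le c+i$, while $F_{i+c+1}$ is $(i+c+1)$-connective and Postnikov complete (Clausen--Mathew). Hence $F_i\to F_{i+c+1}$ is null. For $i=-1$ this gives a section of $\abs{\on{sk}_c\Sigma^\infty_+Y_\bullet}\to\un$, and applying $\uHom(-,\un)$ gives a retraction of $\un\to\Tot_{\le c}(A^\bullet)$. Finally, your Sylow reduction is unnecessary and is only valid $\ell$-locally. Recombining the primes requires exactly the paper's observation that $\un\to\un[1/d]\times\un^\wedge_d$ is descendable of index $\le 2$, fed into \lemref{lem:descendable local abstract}; that lemma multiplies index bounds rather than adding them.
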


By \corref{cor:proper descent} and \propref{prop:h} this implies:

\begin{cor}\label{cor:SHet descent}
  On the category $\sS_1$, the presheaves $\SH^*_{\et}$ and $\SH^!_{\et}$ satisfy $h$-descent.
  Similarly for the presheaves $\SH^*_{\et}[1/2]$ and $\SH^!_{\et}[1/2]$ on $\sS_0$.
\end{cor}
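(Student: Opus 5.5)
The plan is to apply \thmref{thm:descendability} to $\SH_\et$ (resp. $\SH_\et[1/2]$), restricted to algebraic spaces over a fixed $X \in \sS_1$ (resp. $\sS_0$). For this we must check that finite étale surjections and finite radicial surjections over such $X$ are descendable. The localization property holds because $\SH_\et$ is a topological weave. Finite type surjections over a noetherian base are finitely presented. The proof of \thmref{thm:descendability} only uses stratifications of $X$ and base changes to locally closed subspaces and finite covers of them. All of these remain noetherian, finite-dimensional, with residue fields of bounded cohomological dimension. So the whole argument can be run inside the relevant subcategory.

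\textbf{Finite radicial surjections.} Here I would use \thmref{thm:topinv}, or rather its $\SH_\et$-analogue. At a prime $p$ invertible on $X$ there is nothing to do. At a prime $p$ equal to a residue characteristic, $p$-completed étale motivic spectra over an $\mathbf{F}_p$-scheme vanish, by the Artin--Schreier/Bachmann rigidity results. Hence $p$ acts invertibly on $\un$ in $\SH_\et$ over the characteristic $p$ locus. Via localization on $X$ (closed strata of fixed residue characteristic) one reduces to the statement of \thmref{thm:topinv}. This yields that $f^*$ is an equivalence, so $f$ is descendable of index $\le 1$.

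\textbf{Finite étale surjections.} By \lemref{lem:fet desc torsor} it suffices to treat $G$-torsors $g : Y \to X$ for a finite group $G$. By étale descent of $\SH_\et$, $g$ is \v{C}ech-descent, so $\un_X \simeq \Tot(g_{\bullet,*}\un)$ is the homotopy fixed points $(g_*\un_Y)^{hG}$. What must be shown is that this totalization is essentially constant, i.e. that the Tate construction/homotopy fixed points converge at a finite stage. I would split into the rational part and the $\ell$-complete parts. Rationally, the Euler characteristic $|G|$ is invertible (\corref{cor:fet chi rational}), giving index $\le 1$. For $\ell$-torsion, $\SH_\et$ is identified with étale sheaves of $\ell$-complete spectra (Bachmann's rigidity), in which the homotopy fixed point spectral sequence is controlled by $H^*(G)$. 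The uniform bound $c$ on residue-field cohomological dimension, together with $\dim X$, bounds the $\ell$-cohomological dimension of $X$ uniformly. I then want to produce a uniform $N$ such that $\varepsilon^N : \sI^{\otimes N} \to \un$ vanishes (\lemref{lem:index augmentation ideal}). The strategy is to check this on stalks or points where possible, using a nilpotence argument à la Mathew/Clausen--Mathew. Descendability holds for Galois extensions of finite virtual cohomological dimension after inverting $2$ only where real points intervene. This explains the dichotomy: finite virtual cd gives descendability in $\SH_\et[1/2]$, while finite actual cd gives it integrally. Since only finitely many primes divide $|G|$, the rational and $\ell$-adic bounds combine via a fracture square, taking the maximum index. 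Localization (\lemref{lem:descendability and localization}) allows passing to strata.

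The main obstacle is this last step: obtaining a uniform descendability index for finite Galois covers in $\ell$-complete étale sheaves of spectra from a cohomological dimension bound. I would first try to quote Clausen--Mathew's result that étale sheaves on a site of finite cd give descendable Galois extensions with index bounded in terms of cd. The $2$-primary issue with real fields (infinite cd but finite vcd) is handled by inverting $2$.
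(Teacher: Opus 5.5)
Your outline follows the paper's route to \thmref{thm:SHet descendable}: \thmref{thm:descendability}, reduction to torsors via \lemref{lem:fet desc torsor}, a fracture square at the primes dividing $|G|$, and Bachmann's rigidity. However, the one step that actually uses the hypothesis on residue fields is left open. You only say you would ``try to quote'' Clausen--Mathew for a bound on the descendability index of a $G$-torsor $f : Y \to X$ in $\sC = \widehat{\Shv}(X_{\et})^\wedge_p$. (Before that you must also remove $X\otimes\mathbf{F}_p$ by localization, since $\SH_{\et}(X\otimes\mathbf{F}_p)^\wedge_p\simeq 0$ and rigidity needs $p$ invertible.)

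Your fallback of checking $\varepsilon^N\simeq 0$ on stalks cannot work. Conservativity of stalks detects zero objects, not null-homotopic maps, and global cohomology is exactly the obstruction.

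The paper supplies the missing argument as follows. Index $\le c+1$ amounts to a retraction of $\un\to\Tot_{\le c}(A^\bullet)$. Since $A^\bullet\simeq\uHom(\Sigma^\infty_+Y_\bullet,\un)$, it suffices that the boundary $\un[-1]\to F_c$ is null, where $F_i$ is the fibre of $\abs{\on{sk}_i\Sigma^\infty_+Y_\bullet}\to\un$. In fact each $F_i\to F_{i+c+1}$ is null, because:
\begin{itemize}
\item $F_i$ has cohomological dimension $\le c+i$;
\item $F_{i+c+1}$ is $(i+c+1)$-connective;
\item $F_{i+c+1}$ is Postnikov complete \cite[Prop.~2.10]{ClausenMathew}.
\end{itemize}
The cohomological dimension bound is needed against all of $\sC^\heartsuit$, whose objects are derived $p$-complete rather than $p$-torsion. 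This is \lemref{lem:coh dim heart}, proved by a Milnor sequence and a Mittag--Leffler argument. Without it, the étale case of the descendability theorem, and hence the corollary, is not established.

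Several other points need repair.

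\textbf{(1) The corollary itself is never derived.} You never pass from descendability to $h$-descent, which is the actual content of the corollary. Descendable proper finite type surjections satisfy \v{C}ech descent for both $\SH^*_{\et}$ and $\SH^!_{\et}$ by \propref{prop:proper descendable implies descent}. Then \propref{prop:h} extends this to all $h$-covers, using Nisnevich descent of $\SH_{\et}$ and the stability of $\sS_1$ and $\sS_0$ under finite type morphisms.

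\textbf{(2) The fracture square and index bookkeeping.} With only the primes dividing $d=|G|$, the fracture square is $\un\to P_d=\un[1/d]\times\un^\wedge_d$. Its first factor is not rational, so \corref{cor:fet chi rational} does not apply. You need $d_\varepsilon$ invertible in $\SH_{\et}[1/d]$ (\lemref{lem:SHet deps}) together with conservativity of stalks (\lemref{lem:SHet stalks}). Nor do indices combine by taking a maximum. The map $\un\to P_d$ is itself descendable of index $\le 2$, because its fibre $\un^\wedge_d[1/d][-1]$ is a direct summand of a $P_d$-module. \lemref{lem:descendable local abstract} then gives the bound $2(c+1)$.

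\textbf{(3) Radicial surjections.} A space of finite type over $\Z$ has infinitely many residue characteristics, so finitely many localization steps never reach the hypothesis of \thmref{thm:topinv}. You have two ways to fix this:
\begin{itemize}
\item work in $\SH_{\et}(-)_{(p)}$ for each $p$ separately, where both $X[1/p]$ and $X\otimes\mathbf{F}_p$ satisfy the hypothesis and the $p$-localizations jointly detect zero objects; or
\item argue as the paper does, with conservativity of stalks and \lemref{lem:frad}.
\end{itemize}
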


\subsection{Proof of \thmref{thm:SHet descendable}}

We first observe that any $X \in \sS_0$ is of uniformly bounded $p$-étale cohomological dimension $\le c$ for all odd $p$: by \cite[Cor.~2.13]{BachmannRigidity} it suffices to check that $\sup \mathrm{cd}_p k(x) < \infty$, over $x \in X$ and odd primes $p$; but $\mathrm{cd}_p k(x) \le \on{vcd} k(x)$, so this follows by the assumption.
If moreover $X \in \sS_1$, then the same holds at all primes $p$, since $\mathrm{cd}_p k(x) \le \mathrm{cd}\, k(x)$ is then uniformly bounded by definition.

\begin{lem}\label{lem:SHet no minus}
  On the category $\sS$, there is a canonical equivalence $\SH_\et[1/2]_- \simeq 0$.
\end{lem}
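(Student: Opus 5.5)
We want to show $\SH_\et(X)[1/2]_- \simeq 0$ for $X \in \sS$. Recall that after inverting $2$, $\SH(X)[1/2]$ splits as $\SH(X)[1/2]_+ \times \SH(X)[1/2]_-$ via the idempotents $(1 \mp \epsilon)/2$ built from the involution $\epsilon \in \End(\un_X)$ (the class $-\langle -1\rangle$). The minus part is exactly where $\epsilon = 1$, i.e. where $\langle -1 \rangle = -1$ acts. So the goal is to show that in $\SH_\et(X)[1/2]$ the idempotent $e_- = (1+\epsilon)/2$ vanishes, or equivalently that $\epsilon = -1$ on $\un_X$.

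I would argue by étale localization. Since $\SH_\et$ satisfies étale hyperdescent, an endomorphism of $\un_X$ is determined étale-locally in the following sense: $\End(\un_X)$ receives the restriction maps to $\End(\un_{X'})$ for étale covers $X' \to X$, and an idempotent whose image in $\End(\un_{X'})$ vanishes for some étale cover must vanish. The reason is that $e_-\un_X$ is a direct summand of $\un_X$ whose pullback to $X'$ is $e_-\un_{X'} = 0$, and pullback along an étale surjection is conservative by descent. So it suffices to find an étale cover on which $\epsilon = -1$. Take $X' = X \times_{\Z} \Z[1/2][\sqrt{-1}]$ over the locus where $2$ is invertible, i.e. adjoin a square root of $-1$. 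This is finite étale surjective over $X[1/2]$, and over $X'$ we have $-1 = i^2$ a square, so $\langle -1 \rangle = \langle 1 \rangle = 1$, hence $\epsilon = -1$ and $e_- = 0$.

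Over the locus where $2$ is not invertible (characteristic $2$ points), I would use the closed/open decomposition $i : X_{\mathbf{F}_2} \hookrightarrow X \hookleftarrow X[1/2] : j$ and localization. On $\mathbf{F}_2$-schemes $-1 = 1$, so $\langle -1\rangle = 1$ and again $\epsilon = -1$, hence the minus part vanishes there. Localization then gives that $e_-\un_X$ is an extension of $i_*$ and $j_*$ of zero objects, hence zero. Since $e_-$ kills $\un_X$, it kills all of $\SH_\et(X)[1/2]$ by tensoring, and the equivalence follows, canonically as the projection.

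The main obstacle is the identity $\epsilon = -\langle -1\rangle$ with $\langle u^2\rangle = 1$. This is a standard fact about $\End(\un)$ receiving $\mathrm{GW}$ (Morel), valid over any base via pullback from $\Spec \Z$ (resp. $\Z[i,1/2]$). I would cite it rather than reprove it. Note that the noetherian finite-dimensional hypothesis is not really needed for this argument.
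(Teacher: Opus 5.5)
Your proof is correct, but it takes a different route from the paper's. The paper identifies the minus part $\SH_\et[1/2]_-$ with the $\eta$-periodization $\SH_\et[1/2][\eta^{-1}]$, and then cites Mattis--Tubach's vanishing theorem for $\eta$-periodic étale motivic spectra.

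You instead kill the idempotent $e_- = (1-\langle -1\rangle)/2$ directly, in three steps:
\begin{itemize}
\item Over $X[1/2]\otimes_{\Z}\Z[1/2,\sqrt{-1}]$ you have $\langle -1\rangle = \langle i^2\rangle = 1$, so the summand $e_-\un$ vanishes there.
\item It therefore vanishes on $X[1/2]$, because pullback along that finite étale surjection is conservative.
\item On the $\mathbf{F}_2$-fibre $-1=1$, and localization glues the two pieces.
\end{itemize}
You are right to argue through the summand $e_-\un_X$ rather than the endomorphism itself, since conservativity detects zero objects, not zero maps.

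What your route buys:
\begin{itemize}
\item It is elementary and self-contained.
\item Its only nonformal input is $\langle u^2\rangle = 1$ in $\End_{\SH(S)}(\un_S)$ over an arbitrary base. This holds, e.g., because $\mathrm{diag}(u,u^{-1})$ is a product of elementary matrices, so its action on $\P^1$ is $\A^1$-homotopic to the identity.
\item It needs no noetherian or finite-dimensionality hypothesis, so it proves the statement on all algebraic spaces rather than only on $\sS$.
\item It is essentially the trick the paper itself uses, over fields, in \lemref{lem:SHet deps}.
\end{itemize}

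The paper's route is a one-line citation that ties the lemma to the $\eta$-periodic literature and rests on a stronger external theorem. Its cost is the extra identification of the minus part with the $\eta$-periodization, which your argument bypasses entirely.
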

\begin{proof}
  There is a functorial splitting $\SH_\et[1/2] \simeq \SH_\et[1/2]_+ \times \SH_\et[1/2]_-$, where $\langle -1 \rangle \simeq 1$ on the plus part.
  The minus part is equivalently identified with the $\eta$-periodization $\SH_\et[1/2][\eta^{-1}]$ for $\eta$ the algebraic Hopf map (see e.g. \cite[Rem.~3.6]{ALPWitt}, \cite[(1.0.a)]{minus}); this vanishes by \cite[Thm.~A]{MattisTubach}, i.e. $\SH_\et[1/2] \simeq \SH_\et[1/2]_+$.
\end{proof}

\begin{lem}\label{lem:SHet stalks}
  The weave $\SH_{\et}$ satisfies conservativity of stalks for every $X \in \sS_0$.
\end{lem}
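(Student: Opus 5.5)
The plan is to reduce to two separate cases, rational coefficients and mod-$\ell$ coefficients, by an arithmetic fracture argument. Let $\sF \in \SH_\et(X)$ be such that $x^*\sF \simeq 0$ for every point $x \in X$; we want $\sF \simeq 0$. There is an exact triangle $\sF_{\mrm{tors}} \to \sF \to \sF_\Q$, where $\sF_{\mrm{tors}}$ is a filtered colimit of the objects $\Fib(\ell^n : \sF \to \sF)$. Each of these is an iterated extension of shifts of $\sF/\ell$. So it suffices to show that $\sF_\Q \simeq 0$ and that $\sF/\ell \simeq 0$ for every prime $\ell$. Both operations commute with $x^*$, since $x^*$ is exact and colimit-preserving. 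Hence $x^*(\sF_\Q) \simeq 0$ and $x^*(\sF/\ell) \simeq 0$ for every $x$.

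\emph{Rational part.} I will identify $\SH_\et(-)_\Q$ with $\DM_\Q$ on $\sS_0$.
\begin{itemize}
\item The minus part $\SH_\Q{}_-$ dies étale-locally, since $-1$ becomes a sum of squares after adjoining $\sqrt{-1}$. Alternatively, one invokes \lemref{lem:SHet no minus} after inverting $2$.
\item The plus part $\DM_\Q$ already satisfies étale descent (\corref{cor:etQ}).
\end{itemize}
Since $\DM_\Q$ satisfies continuity and geometric generation, it satisfies conservativity of stalks on the locally noetherian $X$ by \cite[Prop.~3.13]{weaves}. This gives $\sF_\Q \simeq 0$.

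\emph{Torsion part.} Fix $\ell$. By Bachmann's rigidity theorem \cite{BachmannRigidity}, which applies under a uniform bound on the $\ell$-étale cohomological dimension of the residue fields, $\ell$-torsion objects of $\SH_\et(X)$ are supported on $X[1/\ell]$. There, $\SH_\et(X[1/\ell])/\ell$ is identified with hypercomplete étale sheaves of $\Z/\ell$-modules (or their $\Z_\ell$-variant), compatibly with $*$-pullback. Under this identification, $x^*(\sF/\ell) \simeq 0$ for all $x$ implies vanishing at all geometric points. Hypercomplete étale sheaves on a space of bounded cohomological dimension are detected on geometric points, because Postnikov towers converge. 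Hence $\sF/\ell \simeq 0$.

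For odd $\ell$, the needed bound follows from membership in $\sS_0$, as observed before \lemref{lem:SHet no minus}. For $\ell = 2$ only the virtual cohomological dimension is bounded, so I first pass to the degree $\le 2$ finite étale cover $p : X' = X[\sqrt{-1}] \to X$. The functor $p^*$ is conservative because $\SH_\et$ satisfies étale descent. Each residue field of $X'$ contains $\sqrt{-1}$, so its $2$-cohomological dimension equals its vcd and is uniformly bounded. Each point of $X'$ lies over a point of $X$, so the stalks of $p^*\sF$ are base changes of stalks of $\sF$ and hence vanish. Thus it suffices to run the argument on $X'$.

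The main obstacle I expect is the torsion step. One must check that the rigidity equivalence is compatible with pullback along the pro-étale-type morphisms $\Spec k(x) \to X$. One must also check that the stalks $x^*$ in $\SH_\et$ agree with the étale-sheaf restriction to $\Spec k(x)$ under rigidity, since this requires commuting étale hypersheafification with a cofiltered limit of schemes. I would handle this using continuity of hypercomplete étale cohomology under the uniform cohomological-dimension bound, which is exactly where the hypothesis $X \in \sS_0$ enters.
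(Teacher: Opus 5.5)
Your argument is correct, but it follows a genuinely different route from the paper's. The paper does not decompose $\sF$. It cites Bachmann's conservativity-of-points theorem for $\SH_{\et}$, valid over schemes on which all finite type schemes have uniformly bounded \'etale cohomological dimension. Its only work is arithmetic: on the \'etale cover $X_i \sqcup X_\omega$ (adjoin $\sqrt{-1}$ away from $2$, and a primitive cube root of unity away from $3$) every residue field is non-formally-real. By Artin--Schreier and Serre this gives $\mathrm{cd}_p \le \mathrm{vcd}(\kappa)$ for \emph{all} primes $p$ at once, including $p=2$ and points of characteristic $2$. You instead essentially reprove such a conservativity statement. You handle the rational part via $\SH_{\et}(-)_{\Q} \simeq \DM_{\Q}$ and stalk-conservativity for continuous, geometrically generated weaves. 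You handle the mod-$\ell$ parts via vanishing in characteristic $\ell$ plus rigidity, where geometric stalks detect hypercomplete \'etale sheaves. Your passage to $X[\sqrt{-1}]$ is the paper's $X_i$ trick restricted to $\ell=2$. The paper's route is shorter and uniform in $p$. Yours shows more clearly where each input enters. It also shows that the torsion step needs no cohomological-dimension bound if you use rigidity in the form the paper invokes later (only $\ell$ invertible), because a hypercomplete \'etale sheaf of spectra whose geometric stalks all vanish is zero.

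Three points need tightening.
\begin{itemize}
\item $X[\sqrt{-1}] \to X$ is not \'etale over characteristic-$2$ points. Work with $X[1/2][\sqrt{-1}] \to X[1/2]$ instead; this is enough, since $2$-torsion objects are supported on $X[1/2]$.
\item \corref{cor:etQ} gives only \v{C}ech \'etale descent. Identifying $\DM_{\Q}(X)$ with the rational objects of $\SH_{\et}(X)$ needs \'etale \emph{hyper}descent of $\DM_{\Q}$. This holds on noetherian finite-dimensional spaces, but it must be cited or argued.
\item The ``main obstacle'' you flag is formal. $\Spec k(x) \to X$ is not pro-\'etale, but no continuity is needed. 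The rigidity functor commutes with $g^*$ for every morphism $g$: both composites preserve colimits and agree on $\Sigma^\infty_+ U$, and $g^*$ preserves \'etale hyperlocal equivalences.
\end{itemize}
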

\begin{proof}
  By \cite[Cor.~5.12]{BachmannRigidity}, it will suffice to show that $X$ admits an \'etale cover by schemes over which every finite type scheme is of uniformly bounded étale cohomological dimension.
  Consider the étale cover $\{X_i, X_\omega\}$ of $X$, where
  \[ X_i := X \otimes_\bZ \bZ[\tfrac{1}{2}, x]/(x^2+1), \qquad X_\omega := X \otimes_\bZ \bZ[\tfrac{1}{3}, x]/(x^2+x+1), \]
  cf. \cite[Ex.~2.14]{BachmannRigidity}.
  Every residue field $K$ of $X_i$ (resp. $X_\omega$) is generated over a residue field $\kappa$ of $X$ by a primitive fourth (resp. third) root of unity; in particular, $-1$ (resp. $-3$) is a square in $K$, so $K$ is not formally real, and its absolute Galois group is therefore torsion-free by the Artin--Schreier theorem.
  By \cite[I.\S3.3, Props.~14 and 14$'$]{SerreGC}, it then follows that
  \[ \mathrm{cd}_p(K) = \mathrm{cd}_p(K(\sqrt{-1})) \le \on{vcd}(K) \le \on{vcd}(\kappa) \]
  for every prime $p$.
  Thus $X_i$ and $X_\omega$ have residue fields of uniformly bounded étale cohomological dimension, and the claim now follows by \cite[Cor.~2.13]{BachmannRigidity}.
\end{proof}

\begin{prop}\label{prop:SHet topinv}
  On the category $\sS_0$, the weave $\SH^*_{\et}$ satisfies topological invariance.
\end{prop}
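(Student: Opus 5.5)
We need topological invariance for $\SH_\et$ on $\sS_0$. Nil-invariance holds for any topological weave, since it is part of the localization property. By \lemref{lem:frad}, it then suffices to show that for a finite radicial surjection $f : Y \twoheadrightarrow X$ with $X \in \sS_0$, the functor $f^*$ is conservative. The strategy is to split $\SH_\et(X)$ into its $p$-complete pieces (for primes $p$ that are residue characteristics) and a part where $\sP$ is inverted, and then handle each piece separately.

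\textbf{Step 1: stalkwise reduction.} Conservativity of stalks holds for $X$ by \lemref{lem:SHet stalks}, and stalks commute with $f^*$. We may therefore reduce to the case of a point: $f$ is then the spectrum of a purely inseparable field extension $\kappa \subseteq L$ (after passing to reductions via nil-invariance). Indeed, if $f^*\sF \simeq 0$, then for each $x \in X$ and each point $y$ of $Y$ over $x$ we have $y^* f^*\sF \simeq f_x^* x^*\sF \simeq 0$. So it suffices to know that $f_x^*$ is conservative for $f_x : \Spec(L) \to \Spec(\kappa)$. A point of caution: \lemref{lem:SHet stalks} gives conservativity for $X$, and $Y$ is finite over $X$, hence also lies in $\sS_0$. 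This makes the stalks well defined, so the reduction is fine.

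\textbf{Step 2: the field case.} Let $p = \mathrm{char}(\kappa)$; if $p = 0$, then $L = \kappa$ and there is nothing to prove. For $p > 0$, \thmref{thm:topinv} already gives that $f^*$ is an equivalence on $\SH_\et(\kappa)[1/p]$, because every prime other than $p$ is invertible in $\sO_\kappa$. It remains to treat the $p$-complete (equivalently $p$-torsion) part. In étale motivic spectra over a field of characteristic $p$, $p$ is itself invertible. The reason is that $\SH_\et(\kappa)^\wedge_p$ vanishes: by Bachmann's rigidity \cite{BachmannRigidity}, $p$-complete étale motivic spectra correspond to $p$-complete spectra on the small étale site, but in characteristic $p$ the Artin--Schreier sequence together with $\mathbb{A}^1$-invariance kills mod $p$ étale motivic cohomology of the unit. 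Concretely, \cite{BachmannRigidity} shows that $\SH_\et(S)^\wedge_p \simeq 0$ when $p$ is invertible in $\sO_S$ fails, i.e. for $S$ of characteristic $p$. Hence $\SH_\et(\kappa) \simeq \SH_\et(\kappa)[1/p]$, and the previous case applies.

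\textbf{Alternative without stalks.} One could instead argue globally by a fracture square. For $\sF \in \SH_\et(X)$ with $f^*\sF \simeq 0$, write $\sF$ in terms of $\sF[1/\sP]$ and the $\sF^\wedge_p$. We have $\sF[1/\sP] \simeq 0$ by \thmref{thm:topinv}. For each $p$, $\sF^\wedge_p$ is supported on $X[1/p]$, where $f$ is a universal homeomorphism of schemes with $p$ invertible; rigidity then identifies this piece with the small étale topos, which is invariant under universal homeomorphisms. The convergence of the fracture square uses the uniform bound on cohomological dimension coming from $X \in \sS_0$.

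\textbf{Main obstacle.} The main obstacle is the $p$-primary part in characteristic $p$, that is, justifying that it vanishes. I expect this to go through the cited rigidity result, \cite[Thm.~A / Cor.~5.12]{BachmannRigidity}. The prime $2$ needs care in $\sS_0$, where only vcd is bounded; there I would use the cover $\{X_i, X_\omega\}$ from \lemref{lem:SHet stalks} together with étale descent to reduce to the case of bounded cd.
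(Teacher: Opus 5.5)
Your Steps 1 and 2 are correct and are essentially the paper's proof. \lemref{lem:frad} together with conservativity of stalks (\lemref{lem:SHet stalks}) reduces to a finite radicial surjection over a field $\kappa$ of characteristic $p>0$. There $p$ is invertible in $\End_{\SH_\et(\kappa)}(\un_\kappa)$ and every other prime is invertible in $\kappa$, so \thmref{thm:topinv} applies directly. The paper gets the invertibility of $p$ from \cite[Thm.~A.1]{BachmannHoyoisEtale} over $\bF_p$, pulled back to $\kappa$, rather than from the rigidity equivalence, which needs $p$ invertible. Your fracture-square alternative and your worry about the prime $2$ are unnecessary, since \lemref{lem:SHet stalks} already covers all of $\sS_0$.
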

\begin{proof}
  For every prime $\ell$, $\ell$ is invertible in $\End_{\SH_{\et}(S)}(\un_S)$ where $S = \Spec(\bF_\ell)$ by \cite[Thm.~A.1]{BachmannHoyoisEtale}.
  It follows that for every field $\kappa$ of characteristic $p>0$, $p$ is invertible in $\End_{\SH_{\et}(S)}(\un_S)$ where $S = \Spec(\kappa)$.
  By \thmref{thm:topinv} it follows that $f^*$ is conservative for any finite radicial surjection $f : Y \twoheadrightarrow X$ with $X$ the spectrum of a field.
  The general case then follows by \lemref{lem:frad} and conservativity of stalks (\lemref{lem:SHet stalks}).
\end{proof}

In order to show descendability for finitely presented surjections in $\SH_\et$, the only obstruction after \thmref{thm:descendability}, \lemref{lem:fet desc torsor}, and \propref{prop:SHet topinv}, is the case of torsors under finite groups.

\begin{prop}\label{prop:SHet fet desc}
  Let $X \in \sS_0$ with uniformly bounded $p$-étale cohomological dimension $\le c$ for every odd prime $p$.
  Let $f : Y \to X$ be a $G$-torsor where $G$ is a finite group of order $d$.
  Then $f$ is descendable of index $\le 2(c+1)$ in $\SH_\et(X)[1/2]$.
  If $X$ is moreover of uniformly bounded $2$-étale cohomological dimension $\le c$, then $f$ is descendable of index $\le 2(c+1)$ in $\SH_\et(X)$.
\end{prop}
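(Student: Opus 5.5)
We need to show that a $G$-torsor $f : Y \to X$ is descendable of index $\le 2(c+1)$, and we want the index bound to be uniform. The plan is to split into a prime-to-$p$ part and a $p$-part and treat each with a different mechanism, recombining via \lemref{lem:index augmentation ideal}.

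\emph{Step 1: reduction to $p$-complete pieces.} The strategy is to show that $\varepsilon_f^{N} : \sI_f^{\otimes N} \to \un_X$ is null-homotopic for $N = 2(c+1)$ after rationalization and after $p$-completion for each prime $p$ dividing $d$ (for $p=2$ only in the second statement).
\begin{itemize}
\item Away from $d$, the Euler characteristic argument of \corref{cor:fet chi stalkwise} applies. By \lemref{lem:SHet no minus} (after inverting $2$), or by étale descent in general, the class $d_\varepsilon$ becomes $d$ stalkwise, using \lemref{lem:SHet stalks}. So $f$ is descendable of index $\le 1$ in $\SH_\et[1/d]$.
\item For a prime $p \mid d$, I would first use \lemref{lem:monotone index} with a $p$-Sylow subgroup $P \subset G$. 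Via the factorization $Y \to Y/P \to X$, the map $Y/P \to X$ is finite étale of degree prime to $p$, hence has index $\le 1$ $p$-locally. Combining with \lemref{lem:descendable local abstract} would cost a multiplicative factor of $1$, so we reduce to $P$-torsors.
\end{itemize}
Gluing the $p$-local statements into one integral null-homotopy is done via the arithmetic fracture square. A null-homotopy that holds after $\otimes \Z[1/d]$ and after each $\Z_{(p)}$-localization, $p \mid d$, gives vanishing of $\varepsilon^N$ up to a bounded extension. This may double the index, which is where I expect the factor $2$ to come from: one factor $(c+1)$ from the torsion part and one from the gluing.

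\emph{Step 2: the $p$-complete case.} By étale rigidity (Bachmann), $p$-complete $\SH_\et(X)$ is identified with $p$-complete étale sheaves of spectra on $X_\et$ (with $p$ invertible on $X$ being the relevant case; on the characteristic-$p$ locus the $p$-completion vanishes by \cite[Thm.~A.1]{BachmannHoyoisEtale}, using localization, \lemref{lem:descendability and localization}). There $f_*\un_Y$ is the sheaf $\mathbf{S}^\wedge_p[G]$-type induced module, and $\varepsilon^N$ is a map $\sI^{\otimes N} \to \un$. The mapping spectra are computed by the étale cohomology spectral sequence, of cohomological dimension $\le c$. Stalkwise, at geometric points, the torsor is trivial, so $\varepsilon_f$ is null (\lemref{lem:descendability section}). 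A map which is zero on stalks lies in filtration $\ge 1$ of the Postnikov/descent filtration. A composite of $c+1$ such maps vanishes since the filtration has length $c+1$. Since $\varepsilon^{N}$ is a tensor power, $\varepsilon^{c+1}$ is a $(c+1)$-fold composite of such maps, giving index $\le c+1$.

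\emph{Main obstacle.} The hardest part is making the ``zero on stalks implies nilpotent of order $c+1$'' argument rigorous in spectra rather than discrete sheaves. This requires the uniform bound on $p$-cohomological dimension of all étale $X$-schemes, which is provided by the preceding observation via \cite[Cor.~2.13]{BachmannRigidity}, and the convergence of the descent spectral sequence (étale hyperdescent). The prime $2$ is excluded in the first statement precisely because real points make $\mathrm{cd}_2$ unbounded.
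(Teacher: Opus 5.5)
\textbf{Where your outline agrees with the paper.} You split along $\un \to P_d := \un[1/d]\times\un^\wedge_d$. You handle $\un[1/d]$ by the Euler characteristic (the argument of Lemma~\ref{lem:SHet deps} together with \corref{cor:fet chi stalkwise}). You handle each $\un^\wedge_p$ by localization, Bachmann rigidity and the cohomological dimension bound, getting index $\le c+1$ there. The Sylow reduction is harmless but unnecessary, since your Step~2 never uses that $P$ is a $p$-group.

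\textbf{The gap: the gluing step.} You only say you \emph{expect} it to double the index. What is needed is that $\un \to P_d$ is descendable of index $\le 2$, combined with \lemref{lem:descendable local abstract} applied with $B = P_d$ and $M = c+1$. The hypothesis of that lemma is that $\varepsilon_A^{c+1}\otimes\id_{P_d}$ is null. This is exactly nullity in $\SH_\et[1/d]$ and in $\SH_\et{}^\wedge_d = \prod_{p\mid d}\SH_\et{}^\wedge_p$. The index-$2$ claim needs a proof, which runs as follows:
\begin{enumerate}
\item The fracture square gives $I_d := \mathrm{fib}(\un\to P_d)\simeq Q_d[-1]$ with $Q_d = \un^\wedge_d[1/d]$.
\item Since $Q_d$ is a $\un^\wedge_d$-module, $I_d$ is a retract of the $P_d$-module $P_d\otimes I_d$.
\item For any $P_d$-module $N$, the map $\varepsilon_d\otimes\id_N$ equals $\mathrm{act}\circ\bigl((\eta\circ\varepsilon_d)\otimes\id_N\bigr)$, which is null because $I_d\to\un\to P_d$ is null.
\item Hence $\varepsilon_d\otimes\id_{I_d}$ is null, and so $\varepsilon_d^{2}=\varepsilon_d\circ(\varepsilon_d\otimes\id_{I_d})$ is null.
\end{enumerate}
The gluing must also be done with $p$-\emph{completions}, not with $p$-localizations or rationalization as you wrote. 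Rigidity only gives nullity in the $p$-complete weave. Moreover, rationalization together with the primes $p\mid d$ does not recover $\un$; you need $\un[1/d]$.

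\textbf{Your Step~2 is a different but workable route.} To make it rigorous:
\begin{itemize}
\item Filter $\pi_0\Maps_\sC(M,N)$ by the images of $\pi_0\Gamma(X;\tau_{\ge s}\uHom(M,N))$.
\item This filtration is multiplicative under composition, because $\tau_{\ge a}\otimes\tau_{\ge b}$ is $(a+b)$-connective, also after $p$-completion.
\item Each $\varepsilon\otimes\id_{I^{\otimes k}}$ lies in $F^1$ because the torsor splits étale-locally. Phrase it this way rather than via stalks of $p$-complete sheaves.
\item $F^{c+1}=0$ requires $H^s(X;\sG)=0$ for $s>c$ and all $\sG\in\sC^\heartsuit$. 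For non-torsion derived $p$-complete $\sG$ this needs the Milnor/Mittag--Leffler argument of \lemref{lem:coh dim heart}, plus Postnikov completeness via Clausen--Mathew.
\end{itemize}
The paper instead filters $\un$ by the skeleta of $\Sigma^\infty_+Y_\bullet$. It shows $F_i\to F_{i+c+1}$ is null, using the cohomological dimension of the source and the connectivity of the target. Both routes use the same inputs; yours packages them as a multiplicative descent filtration rather than the Čech skeleta.
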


We treat the two cases in parallel; all objects below are formed in the weave $\SH_\et[1/2]$, resp. $\SH_\et$ in the second case.
To simplify notation we write $\un := \un_X$ and $A := f_*(\un_Y)$.
By \lemref{lem:descendable local abstract}, we may divide the proof of \propref{prop:SHet fet desc} into the following two statements:

\begin{enumerate}[
    before=\itshape,
    label={\textup{\hyperref[prop:SHet fet desc]{(\ref*{prop:SHet fet desc}.\arabic*)}}},
    ref={(\ref*{prop:SHet fet desc}.\arabic*)}]
  \item\label{item:SHet fet desc/P_d}
  Let $\un^{\wedge}_d$ be the $d$-completion of $\un$ and consider the $E_\infty$-algebra $P_d := \un[1/d] \times \un^{\wedge}_d$.
  The unit map $\un \to P_d$ is descendable of index $\le 2$.

  \item\label{item:SHet fet desc/A}
  \xdef\SHetfetdescAnum{\arabic{enumi}}
  The unit $\un \to A$ becomes descendable of index $\le c+1$ after extension of scalars along $\un \to P_d$.
\end{enumerate}

\subsection{Proof of \itemref{item:SHet fet desc/P_d}}

Recall the cartesian square (see e.g. \cite[Lem.~3.19]{BachmannHopkinsEta})
\[\begin{tikzcd}
  \un \ar{r}\ar{d}
  & \un^\wedge_d \ar{d}
  \\
  \un[1/d] \ar{r}
  & \un^{\wedge}_d[1/d].
\end{tikzcd}\]
This yields the exact triangle $\un \to P_d \to Q_d$, where $Q_d := \un^{\wedge}_d[1/d]$.
The claim is that for $I_d \simeq Q_d[-1]$ the fibre of the unit map $\un \to P_d$, the map $\varepsilon^2 : I_d^{\otimes 2} \to \un$ is null-homotopic.

Now $Q_d$ is a module over $\un^{\wedge}_d$, hence a direct summand of $\un^{\wedge}_d \otimes Q_d$ via the identity
\[
  \id \simeq \bigl[ Q_d \simeq \un\otimes Q_d \xrightarrow{\mrm{unit} \otimes \id} \un^{\wedge}_d \otimes Q_d \xrightarrow{\mrm{act}} Q_d \bigr].
\]
Tensoring the splitting $P_d = \un[1/d]\oplus\un^{\wedge}_d$ with $Q_d$ exhibits $\un^{\wedge}_d \otimes Q_d$, and hence $Q_d$, as a direct summand of the free $P_d$-module $P_d \otimes Q_d$.
Shifting, $I_d \simeq Q_d[-1]$ is a direct summand of the $P_d$-module $P_d \otimes I_d$.

To prove that $\varepsilon^2 : I_d^{\otimes 2} \to \un$ is null-homotopic we make use of the following observation:

\begin{enumerate}
	\item[$(\ast)$] For every $P_d$-module $N$, the map $\varepsilon \otimes \id_N : I_d \otimes N \to \un \otimes N \simeq N$ is null-homotopic.
\end{enumerate}
Indeed, consider the composite
\[
  I_d \otimes N \xrightarrow{\varepsilon \otimes \id_N}
  \un \otimes N \xrightarrow{\mrm{unit} \otimes \id_N}
  P_d \otimes N \xrightarrow{\mrm{act}} N.
\]
The composite of the two rightmost arrows is the identity of $N$, while the composite of the two leftmost is null-homotopic (via the null-homotopy of $I_d \to \un \to P_d$).
It follows that the entire composite is $\varepsilon \otimes \id_N$ and that it is null-homotopic.

We apply $(\ast)$ to $N := P_d \otimes I_d$.
Since $I_d$ is a direct summand of $N$, the map $\varepsilon \otimes \id_I : I_d^{\otimes 2} \to I_d$ is a direct summand of $\varepsilon \otimes \id_N$, hence also null-homotopic.
Hence the composite
\[
  \varepsilon^{\otimes 2} : I_d^{\otimes 2} \xrightarrow{\varepsilon \otimes \id_I} I_d \xrightarrow{\varepsilon} \un
\]
is also null-homotopic, as claimed.

\subsection{Proof of \itemref{item:SHet fet desc/A}}

The claim is that
\[A \otimes P_d \simeq f_*(\un_Y) \otimes (\un[1/d]\times \un^{\wedge}_d) \simeq f_*\bigl(\un_Y[1/d]\bigr) \times f_*\bigl((\un_Y)^{\wedge}_d\bigr)\]
is descendable of index $\le c+1$ over $P_d$.
This amounts to the claim that $f$ is descendable of index $\le c+1$ in the weaves $\SH_{\et}[1/2, 1/d]$ and $\SH_{\et}[1/2]^{\wedge}_d$ (resp. $\SH_{\et}[1/d]$ and $\SH_{\et}{}^{\wedge}_d$, in the second case).
For $\SH_{\et}[1/2, 1/d]$ (resp. $\SH_{\et}[1/d]$) the claim follows from \corref{cor:fet chi stalkwise} (where conservativity of stalks for $X$ holds by \lemref{lem:SHet stalks}) and the following lemma, which was suggested to us by Bachmann.

\begin{lem}\label{lem:SHet deps}
  For every field $\kappa$ and every natural number $d$, the class $d_\varepsilon$ is invertible in $\End_{\SH_{\et}(\kappa)[1/d]}(\un_\kappa)$ (where the notation is as in \lemref{lem:fet chi field}).
\end{lem}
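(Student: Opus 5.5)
The plan is to reduce to a field in which $-1$ is a square, where the class $d_\varepsilon$ becomes the plain integer $d$. No $2$-adic fracture square should be needed: étale hyperdescent in $\SH_\et$ will do the work.

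First I will make $d_\varepsilon$ explicit. By construction, $d_\varepsilon$ is the image of the class $\sum_{i=1}^{d} \langle (-1)^{i-1} \rangle$ under the canonical map $\on{GW}(\kappa) \to \End_{\SH(\kappa)}(\un_\kappa) \to \End_{\SH_\et(\kappa)[1/d]}(\un_\kappa)$. Equivalently, $d_\varepsilon = \tfrac{d}{2} h$ if $d$ is even and $d_\varepsilon = 1 + \tfrac{d-1}{2} h$ if $d$ is odd, where $h = 1 + \langle -1 \rangle$. Consequently, whenever $\langle -1 \rangle = 1$ in the endomorphism ring, we get $d_\varepsilon = d$, which is invertible once $d$ is inverted.

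Next I will choose an étale cover on which $\langle -1\rangle = 1$. If $\on{char}\kappa = 2$, then $-1 = 1$ and there is nothing to do. Otherwise, let $\kappa' := \kappa(\sqrt{-1})$ and let $u : \Spec(\kappa') \to \Spec(\kappa)$ be the resulting finite étale surjection, of degree $1$ or $2$. In $\on{GW}(\kappa')$ we have $\langle -1 \rangle = \langle 1 \rangle$, because $-1$ is a square in $\kappa'$. Since $u^*$ is symmetric monoidal and compatible with the map from $\on{GW}$, it sends $d_\varepsilon$ to $d_\varepsilon = d$ in $\End_{\SH_\et(\kappa')[1/d]}(\un_{\kappa'})$, and this class is invertible there.

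To descend invertibility back to $\kappa$, note that $\SH_\et[1/d]$ satisfies étale (hyper)descent, since localization at $d$ preserves étale-local objects. Hence $u^*$ is conservative: by \lemref{lem:abstr*prop}, descent implies separatedness. Write $e := d_\varepsilon$. Then $u^*\Cofib(e) \simeq \Cofib(u^* e) \simeq 0$, so $\Cofib(e) \simeq 0$ and $e$ is invertible.

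The only step needing care is the identification of $d_\varepsilon$ with the Grothendieck--Witt class $\sum_i \langle (-1)^{i-1}\rangle$, together with its compatibility with base change. This is part of the construction of $d_\varepsilon$ in \cite{topinv}, via the motivic Euler characteristic of a degree-$d$ finite étale algebra (compare \lemref{lem:fet chi field}). Everything else is formal.
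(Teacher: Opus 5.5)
Your proof is correct and follows essentially the same route as the paper: write $d_\varepsilon=\sum_i\langle(-1)^{i-1}\rangle$, observe that it equals $d$ once $-1$ is a square, and descend invertibility along the degree-$\le 2$ finite étale cover obtained by adjoining $\sqrt{-1}$, using conservativity of pullback that comes from étale descent for $\SH_\et$. The only cosmetic difference is that you use the field $\kappa(\sqrt{-1})$ where the paper uses the étale algebra $\kappa[x]/(x^2+1)$ (possibly split, with $-1$ a square in each residue field).
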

\begin{proof}
  Recall that $d_\varepsilon$ is by definition the sum of $\langle (-1)^{i-1} \rangle$ over $1\le i\le d$, and that the class $\langle a \rangle \in \End_{\SH(\kappa)}(\un_\kappa)$ of a unit $a \in \kappa^\times$ depends only on its square class: under the identification $\End_{\SH(\kappa)}(\un_\kappa) \simeq \on{GW}(\kappa)$ of Morel (cf. \cite[Thm.~10.12]{BachmannHoyois}), it is the quadratic form $\langle a \rangle$.
  In particular, if $-1$ is a square in $\kappa$ (e.g. if $\kappa$ is of characteristic $2$), then $\langle -1 \rangle \simeq \langle 1 \rangle \simeq 1$, so $d_\varepsilon \simeq d$ is invertible in $\End_{\SH_{\et}(\kappa)[1/d]}(\un_\kappa)$.

  In general, we may assume $\kappa$ is not of characteristic $2$, so that $\kappa' := \kappa[x]/(x^2+1)$ is a finite étale $\kappa$-algebra of degree $2$, in every residue field of which $-1$ is a square.
  Since $\SH_\et$ satisfies étale descent, the pullback functor $p^*$ along $p : \Spec(\kappa') \twoheadrightarrow \Spec(\kappa)$ is conservative, and remains so after restricting to $[1/d]$-local objects.
  The class $d_\varepsilon$ is stable under base change, so by the previous paragraph $d_\varepsilon$ becomes a unit in $\SH_\et(\kappa')[1/d]$, hence by conservativity is invertible already in $\SH_\et(\kappa)[1/d]$.
\end{proof}

For $\SH_{\et}[1/2]^{\wedge}_d$, which is by definition the product of $\SH_{\et}[1/2]^{\wedge}_p$ over all odd primes $p$ dividing $d$ (resp. $\SH_{\et}{}^{\wedge}_d$, the product of $\SH_{\et}{}^{\wedge}_p$ over all primes $p$ dividing $d$), we argue as follows, uniformly in $p$.
By \cite[Thm.~A.1]{BachmannHoyoisEtale}, $\SH_{\et}(Z_p)^\wedge_p \simeq 0$ for $Z_p := X \otimes_\bZ \bF_p$, the characteristic $p$ part of $X$.
Hence by \lemref{lem:descendability and localization} (since $p$-completion is exact, $\SH_{\et}(-)^{\wedge}_p$ still satisfies the localization property), we may replace $X$ by $X[1/p]$ and thereby assume $p$ is invertible on $X$.
Then Bachmann's rigidity equivalence
\[ \SH_{\et}(X)^\wedge_p \simeq \widehat{\Shv}(X_{\et})^\wedge_p \]
identifies $\SH_{\et}(X)^\wedge_p$ with the $p$-completion of the $\infty$-category of hypercomplete sheaves of spectra on the small étale site of $X$ (see \cite[Thm.~3.1]{BachmannRemarks}, which requires only that $p$ be invertible on $X$; cf. \cite[Thm.~6.6]{BachmannRigidity}).
This reduces us to the following statement (where $f : Y \to X$ is still as in \propref{prop:SHet fet desc}):

\begin{enumerate}[
    wide=0pt, labelsep=0.5em, before=\itshape,
    label={\textup{\hyperref[prop:SHet fet desc]{(\ref*{prop:SHet fet desc}.\SHetfetdescAnum.\alph*)}}},
    ref={(\ref*{prop:SHet fet desc}.\SHetfetdescAnum.\alph*)}]
  \item\label{item:SHet fet desc/P_d/Shv}
  The morphism $f$ is descendable of index $\le c+1$ in $\widehat{\Shv}(X_{\et})^\wedge_p$.
\end{enumerate}

We write $\sC := \widehat{\Shv}(X_{\et})^\wedge_p$ and denote its unit object by $\un$.
Assertion \itemref{item:SHet fet desc/P_d/Shv} is precisely that the map $\un \to \Tot_{\le c}(A^\bullet)$ admits a retraction in $\sC$, where $A^\bullet$ is the \v{C}ech nerve, with $A^n = A^{\otimes n+1}$.
Writing $f_\bullet : Y_\bullet \to X$ for the \v{C}ech nerve of $f : Y \twoheadrightarrow X$, and $\Sigma^\infty_+ Y_n$ for the object of $\sC$ represented by $Y_n$, we have
\[
  \uHom(\Sigma^\infty_+ Y_\bullet, \un) \simeq f_{\bullet,*}f_\bullet^*(\un) \simeq A^\bullet
\]
using $f_{n,*}f_n^* \simeq f_{n,!}f_n^!$ (since each $f_n$ is finite \'etale) and the projection formula.
Writing $\abs{\on{sk}_i(Y_\bullet)}$ for the geometric realization of the $i$-skeleton, it will thus suffice to show that for $F_i$ the fibres
\[ F_i \to \abs{\on{sk}_i \Sigma^\infty_+ Y_\bullet} \to \un, \]
the boundary map $\partial : \un[-1] \to F_c$ is null-homotopic.
We claim more generally that for every $i \ge -1$ the map $F_i \to F_{i+c+1}$ is null-homotopic (for $i=-1$, $F_{-1} \simeq \un[-1]$ and this is the boundary $\partial$).
By a standard Postnikov filtration argument, it will suffice to show the following:

\begin{enumerate}[label={(\roman*)}]
  \item\label{item:Fi Post} $F_{i+c+1}$ is Postnikov-complete.
	\item\label{item:Fi cd} $F_i$ is of cohomological dimension $\le c+i$ for every $i \ge -1$.
	\item\label{item:Fi conn} $F_{i+c+1}$ is $(i+c+1)$-connective.
\end{enumerate}

Here we say that an object $\sF \in \sC$ is of \emph{cohomological dimension $\le N$} if
\[
  H^s(\sF; \sG) := \pi_0 \Maps_\sC(F, \sG[s]) \simeq 0
\]
for all $\sG \in \sC^\heartsuit$ and all $s>N$.
This condition is closed under extensions and direct summands.

\begin{lem}\label{lem:coh dim heart}
  For every finitely presented \'etale $U \to X$, the object $\Sigma^\infty_+ U \in \sC$ is of cohomological dimension $\le c$.
\end{lem}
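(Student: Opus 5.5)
We reduce the claim to a statement about étale cohomology of the small étale site of $U$, and then invoke the cohomological-dimension hypothesis. For $\sG \in \sC^\heart$ and $s \in \Z$ we have, by adjunction,
\[
  H^s(\Sigma^\infty_+ U; \sG) \simeq \pi_0 \Maps_\sC(\Sigma^\infty_+ U, \sG[s]) \simeq \pi_{-s}\, \Gamma(U_\et; \sG|_U),
\]
where $\Gamma(U_\et; -)$ is taken in hypercomplete sheaves of spectra. Here $\Sigma^\infty_+ U$ in the $p$-complete category is the $p$-completion of the suspension spectrum, and mapping out of a $p$-completion into the $p$-complete object $\sG[s]$ is the same as mapping out of the suspension spectrum itself.

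The objects of $\sC^\heart$ are $p$-complete sheaves in the heart, i.e.\ (derived) $p$-complete sheaves of abelian groups on $X_\et$. For such a $\sG$, the computation above identifies $H^s(\Sigma^\infty_+ U;\sG)$ with the hypercohomology $H^s_\et(U; \sG|_U)$ of a sheaf of abelian groups. We must show this vanishes for $s > c$.

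The key step is the following. Every $p$-complete sheaf $\sG$ in the heart sits in a fibre sequence
\[
  \sG \to \lim_n \sG/p^n
\]
(derived quotients), in fact an equivalence. Each $\sG/p^n$ is an extension of $p$-torsion sheaves, namely $p^n$-torsion sheaves in degrees $0$ and $1$. It therefore suffices to treat $p$-primary torsion sheaves, at the cost of tracking a $\lim$, i.e.\ a $\lim^1$ term.

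For a $p$-torsion sheaf $\sF$ on $U_\et$, the bound $H^s_\et(U;\sF)=0$ for $s>c$ is exactly the uniform $p$-cohomological dimension hypothesis on $X$, in the form used via \cite[Cor.~2.13]{BachmannRigidity}. That result asserts that every $U$ étale of finite presentation over $X$ has $\mathrm{cd}_p(U_\et)\le c$, so it applies directly to $U$. It also gives Postnikov completeness of $U_\et$, which is what lets hypercohomology be computed degreewise.

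The main obstacle is passing to the limit over $n$ without losing a degree through the $\lim^1$ term, since naively the limit over $n$ would only give a bound of $c+1$. My plan is to avoid this by using the $p$-complete heart directly. A derived $p$-complete $\sG$ is determined by the short exact sequences
\[
  0 \to \sG/p \to \sG/p^{n+1} \to \sG/p^{n} \to 0,
\]
so one can instead argue by dévissage. The category of objects of cohomological dimension $\le c$ is closed under limits of towers whose transition maps are surjective on $H^c$, and $H^c(U;-)$ is right exact on $p$-torsion sheaves because $H^{c+1}$ vanishes. Hence the tower $H^c(U;\sG/p^n)$ is Mittag-Leffler, the $\lim^1$ term vanishes, and $H^{s}(U;\sG)=\lim_n H^s(U;\sG/p^n)=0$ for $s>c$.
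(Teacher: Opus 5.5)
Your proposal is correct and follows essentially the same route as the paper. You identify $H^s(\Sigma^\infty_+ U;\sG)$ with étale cohomology of $U$, write $\sG \simeq \lim_n \sG/p^n$, and use the Milnor sequence, killing the one potentially nonzero term $\lim\nolimits^1_n H^c_\et(U;\sG/p^n)$ by Mittag--Leffler via surjectivity of the transition maps on $H^c$.

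The only difference is cosmetic. You get surjectivity from the cofibre sequence of derived quotients $\sG/p \to \sG/p^{n+1} \to \sG/p^n$, whose fibre has vanishing $H^{c+1}$ because both its homotopy sheaves are $p$-torsion. Your displayed ``short exact sequence'' must be read this way, since with underived quotients it need not be exact. The paper instead first identifies $H^c$ of the derived quotient with $H^c_\et(U;\sG/p^k\sG)$ and then uses $0 \to p^k\sG/p^{k+1}\sG \to \sG/p^{k+1}\sG \to \sG/p^k\sG \to 0$.
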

\begin{proof}
  Since $X$ is of uniformly bounded $p$-étale cohomological dimension $\le c$ (by the hypothesis of \propref{prop:SHet fet desc}), we have
  \[
    H^s(\Sigma^\infty_+ U; \sG) \simeq
    H^s_{\et}(U; \sG) \simeq
    0
  \]
  for all $s>c$ and all \emph{$p$-torsion} $\sG \in \sC^\heartsuit$.
  For $\sG \in \sC^\heartsuit$ arbitrary, denote by $\sG\modmod p^k$ the cofibre
  \[
    \sG \xrightarrow{p^k} \sG \to \sG\modmod p^k.
  \]
  Note that $\pi_0(\sG\modmod p^k) \simeq \sG/p^k \sG$ and $\pi_1(\sG\modmod p^k)$ is the $p^k$-torsion subsheaf $\sG_{\mrm{tors}} \sub \sG$, so $\sG\modmod p^k$ is $p^k$-torsion.
  From the exact triangle $\sG_{\mrm{tors}}[1] \to \sG\modmod p^k \to \sG/p^k \sG$ where each term is $p^k$-torsion, we read off: $H^s_\et(U; \sG\modmod p^k) \simeq 0$ for $s>c$ and $H^c_\et(U; \sG\modmod p^k) \simeq H^c_\et(U; \sG/p^k \sG)$, the latter using the vanishing of $H^{>c}_\et(U; \sG_{\mrm{tors}})$.
  Note that this second isomorphism is compatible with transition maps, i.e. an isomorphism of pro-systems.

  As $\sG$ is derived $p$-complete, i.e. $\sG \simeq \lim_k \sG\modmod p^k$, we have
  \[ \Maps_{\sC}(\Sigma^\infty_+ U, \sG) \simeq \lim_k \Maps_{\sC}(\Sigma^\infty_+ U, \sG\modmod p^k) \simeq \lim_k R\Gamma_{\et}(U; \sG\modmod p^k) \]
  and its homotopy groups sit in the Milnor sequence
  \[ 0 \to \sideset{}{^1}\lim_k H_\et^{s-1}(U; \sG\modmod p^k) \to H^s(\Sigma^\infty_+ U; \sG) \to \lim_k H^s_\et(U; \sG\modmod p^k) \to 0. \]
  We know that the right-hand term vanishes for $s>c$ and the left-hand term vanishes for $s \ge c+2$.
  In particular, it remains to prove that $H^{c+1}(\Sigma^\infty_+U; \sG) \simeq \lim\nolimits^1_k H_\et^c(U; \sG/p^k \sG)$ vanishes.

  Using the short exact sequence
  \[0 \to p^k \sG/p^{k+1}\sG \hook \sG/p^{k+1} \sG \twoheadrightarrow \sG/p^k \sG \to 0,\]
  where the kernel is $p$-torsion, we obtain the long exact sequence
  \[
    \cdots \to H_\et^c(U; \sG/p^{k+1} \sG) \xrightarrow{\phi_k} H_\et^c(U; \sG/p^k \sG) \xrightarrow{\partial} H_\et^{c+1}(U; p^k \sG/p^{k+1} \sG) \to \cdots,
  \]
  where $H_\et^{c+1}(U; p^k \sG/p^{k+1} \sG) \simeq 0$.
  Thus the transition maps $\phi_k$ are surjective, the pro-system $\{ H_\et^c(U; \sG/p^k \sG) \}_k$ satisfies the Mittag--Leffler condition, and $\lim\nolimits^1_k H_\et^c(U; \sG/p^k \sG) \simeq 0$.
  In particular, $H^s(\Sigma^\infty_+ U; \sG) \simeq 0$ for all $s>c$.
\end{proof}

We now use \lemref{lem:coh dim heart} to conclude the proof as follows.
First, $\on{sk}_0(\Sigma^\infty_+ Y_\bullet) \simeq \Sigma^\infty_+ Y$ is of cohomological dimension $\le c$.
By construction there are exact triangles (see \cite[Thm.~1.2.4.1, Rem.~1.2.4.3]{LurieHA})
\[ \abs{\on{sk}_{i-1} \Sigma^\infty_+ Y_\bullet} \to \abs{\on{sk}_{i} \Sigma^\infty_+ Y_\bullet} \to C_i[i], \]
where each $C_i$ is a direct summand of $\Sigma^\infty_+ Y_i$.
Thus by \lemref{lem:coh dim heart} again, $C_i[i]$ is of cohomological dimension $\le c+i$.
By induction, we conclude that $\abs{\on{sk}_i(\Sigma^\infty_+ Y_\bullet)}$ is of cohomological dimension $\le c+i$, for every $i \ge 0$.

\textit{Proof of \itemref{item:Fi cd}.}
For $i=-1$, $F_{-1} \simeq \un[-1] \simeq \Sigma^\infty_+ X[-1]$ is of cohomological dimension $\le c-1$.
Thus in the exact triangle $\un[-1] \to F_i \to \abs{\on{sk}_i \Sigma^\infty_+ Y_\bullet}$, the outer terms are both of cohomological dimension $\le c+i$, hence so is $F_i$.

\textit{Proof of \itemref{item:Fi conn}.}
The cofibre $F_i[1]$ of $\abs{\on{sk}_i \Sigma^\infty_+ Y_\bullet} \to \abs{\Sigma^\infty_+ Y_\bullet} \simeq \un$ is built out of the objects $C_j[j]$, $j>i$, under colimits.
Each $C_j[j]$ is $j$-connective, as a direct summand of $\Sigma^\infty_+Y_i[j]$, and hence $(i+1)$-connective.
Hence $F_i[1]$ is $(i+1)$-connective, i.e. $F_i$ is $i$-connective.

\textit{Proof of \itemref{item:Fi Post}.}
More generally, every $\sF \in \sC$ built out of finite colimits and limits from $\Sigma^\infty_+ U$, where $U \to X$ is finitely presented \'etale, is Postnikov-complete.
This follows from \cite[Prop.~2.10]{ClausenMathew}, since $\sF$ is hypercomplete by assumption and, by \lemref{lem:coh dim heart}, the objects $\Sigma^\infty_+ U \in \sC$ are of cohomological dimension $\le c$ for every finitely presented \'etale $U \to X$.

\section{Lisse extensions and forgetting supports}\label{sec:fsupp}

We fix a presentable weave $\D$ on algebraic spaces and consider its lisse extension to Artin stacks.
For every $1$-Artin morphism $f : \sX \to \sY$ with proper diagonal, there exists a natural transformation $\alpha_f : f_! \to f_*$, called \emph{forgetting supports} (see \cite[\S 2.2]{weavelisse}).
In this section we provide sufficient conditions for this map to be invertible when $f$ is proper.
In particular, this will imply that $f$ admits $*$-direct image in $\D$, in the sense of \cite[\S 1.4]{weavelisse}.

\ssec{Descendability}

We begin with the following stacky generalization of \thmref{thm:descendability}.
Note that this does not require $\D$ to be lisse-extended from algebraic spaces.

\begin{thm}\label{thm:descendability DM}
  Let $\D$ be a presentable weave on Artin stacks satisfying localization, and suppose every finite surjection which is either étale or radicial is descendable.
  Let $\sY$ be a qcqs Deligne--Mumford stack and $f : \sX \twoheadrightarrow \sY$ a representable finitely presented surjection.
  Then $f$ is descendable.
\end{thm}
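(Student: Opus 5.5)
The plan is to reduce to the case where $\sY$ is an algebraic space, and then invoke \thmref{thm:descendability}. The reduction will use the local structure of DM stacks together with \corref{cor:descendable local} and \lemref{lem:descendability and localization}.

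\emph{Step 1: a finite cover of $\sY$ by an algebraic space.} Since $\sY$ is a qcqs DM stack, there is a stratification of $\sY$ by finitely many locally closed reduced substacks $\sY_\alpha$ with a convenient form. Each $\sY_\alpha$ should be a quotient stack $[V_\alpha/G_\alpha]$, or at least admit a finite étale surjection from such a quotient, with $V_\alpha$ a qcqs algebraic space and $G_\alpha$ a finite group. I would obtain this from the standard structure theorem for DM stacks, via generic gerbe structure and noetherian approximation as in \cite{RydhApprox}. Refining, we arrange a filtration by closed substacks $\initial = \sZ_0 \sub \cdots \sub \sZ_n = \sY$ whose successive differences are finite disjoint unions of strata. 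By \lemref{lem:descendability and localization}, whose proof works verbatim for stacks given localization and smooth (here étale) base change, together with radditivity, it suffices to show each $f \times_\sY \sY_\alpha$ is descendable.

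\emph{Step 2: base change to $V_\alpha$.} The quotient map $p_\alpha : V_\alpha \to [V_\alpha/G_\alpha]$ is a finite étale surjection, being a $G_\alpha$-torsor, so by hypothesis it is descendable. It is representable, so $p_{\alpha,*}$ satisfies the proper projection formula, and base change $\mrm{Ex}^*_*$ holds because finite morphisms have $p_* \simeq p_!$. \corref{cor:descendable local} then reduces the claim to descendability of the base change $f_{V_\alpha} : \sX \times_\sY V_\alpha \to V_\alpha$. Since $f$ is representable, this is a finitely presented surjection of qcqs algebraic spaces. By \thmref{thm:descendability}, applied to the restriction of $\D$ to algebraic spaces, this morphism is descendable. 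If the strata only admit a finite étale surjection from a quotient stack, I would compose that cover with $p_\alpha$; descendability is closed under composition.

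\emph{Main obstacle.} The delicate point is Step 1: producing, for a general qcqs (possibly non-noetherian, non-separated) DM stack, a finite stratification by quotient-type stacks of finite groups. Equivalently, one needs a finite surjection, étale over each stratum, from an algebraic space. My first attempt would use Rydh's theorem, which states that a qcqs DM stack admits a finite, finitely presented surjection $W \to \sY$ from an algebraic space, with $W$ a scheme after a further Nisnevich-type cover. This would replace the stratification argument with a single finite cover. The price is that this cover is only finite, not étale or radicial. I would then apply the generic flatness/étaleness stratification of \cite[IV\textsubscript{4}, 17.16.4]{EGA} on the DM stack to split it into finite radicial and finite étale pieces stratum by stratum, just as in the proof of \thmref{thm:descendability}.
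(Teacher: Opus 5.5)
\textbf{There is a genuine gap, at Step~1.} Your formal reductions are sound. The proof of \lemref{lem:descendability and localization} works for closed immersions of stacks. Once a stratum $\sY_{\alpha}$ carries a finite étale surjection $V_{\alpha} \twoheadrightarrow \sY_{\alpha}$ from a qcqs algebraic space, \corref{cor:descendable local} and \thmref{thm:descendability} finish the argument exactly as you say.

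Step~1, however, carries all of the geometric content, and you assert it rather than prove it. Neither of your suggested routes closes it as written.
\begin{itemize}
\item The quotient-stack route would need generic flatness of the inertia (to get gerbes over algebraic spaces), étale-local trivializations of those gerbes, and a further stratification making the trivializing covers finite. None of this is carried out.
\item The fallback misapplies \cite[IV\textsubscript{4}, Prop.~17.16.4]{EGA}. That is a statement about schemes, proved by working at generic points with their residue fields. It cannot simply be run ``on the DM stack'', whose generic points are residual gerbes.
\item You also cannot prove that Rydh's finite cover $W \twoheadrightarrow \sY$ is descendable and then apply \corref{cor:descendable local}. That descendability is itself a special case of the statement being proved.
\end{itemize}

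\textbf{A cheaper way to fill Step~1.} Choose an étale surjection $p : Y \twoheadrightarrow \sY$ with $Y$ affine; it is representable, quasi-compact and quasi-finite.
\begin{itemize}
\item Descend $\sY$ and $p$ to a noetherian level by approximation (as you already propose).
\item Check on an étale atlas that $p$ is finite over a dense open substack; this is legitimate because finiteness is étale local on the target. It holds because a quasi-compact algebraic space étale over a field is finite, and finiteness spreads out from generic points.
\item Run noetherian induction, using that $p$ stays an étale surjection from an affine scheme after restriction to closed substacks.
\end{itemize}
This gives a finite stratification over which $p$ restricts to finite étale surjections $Y \times_{\sY} \sY_{\alpha} \twoheadrightarrow \sY_{\alpha}$. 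Your Step~2 then applies with $V_{\alpha} := Y \times_{\sY} \sY_{\alpha}$.

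\textbf{Comparison with the paper.} With this fix your proof is a genuine alternative to the paper's, which never builds a stratification. The paper applies Rydh's étale dévissage theorem \cite[Thm.~D]{RydhDevissage} to the class $\sC_{0}$ of étale finitely presented $\sY' \to \sY$ over which $f$ becomes descendable. It checks that $\sC_{0}$:
\begin{itemize}
\item contains an affine étale cover (by \thmref{thm:descendability});
\item is stable under étale maps (by base change);
\item descends along finite étale surjections (by \corref{cor:descendable local});
\item glues an open $\sU \in \sC_{0}$ with an étale neighbourhood in $\sC_{0}$ of its complement (by \lemref{lem:descendability and localization}).
\end{itemize}
These are the same tools you use; dévissage is the black box that replaces your Step~1.
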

\begin{proof}
  Let $\sC \sub \mrm{Art}_{/\sY}$ be the full subcategory of Artin stacks which are étale and finitely presented over $\sY$, and denote by $\sC_0 \sub \sC$ the full subcategory spanned by $\sY' \to \sY$ such that $f_{\sY'} := f \fibprod_\sY \sY'$ is descendable.
  We will show that $\sC_0 = \sC$, so in particular $f$ is descendable, by applying the dévissage theorem of \cite[Thm.~D]{RydhDevissage}.

  Since $\sY$ is a quasi-compact DM stack, there exists an affine scheme $Y$ and a representable étale surjection $y : Y \twoheadrightarrow \sY$.
  Then $Y \in \sC$ and $f_Y$ is a finitely presented surjection of algebraic spaces, so $Y \in \sC_0$ by \thmref{thm:descendability}.

  It remains to verify the following closure conditions from \cite[Thm.~D]{RydhDevissage}:
  \begin{itemize}
    \item For any morphism $\sY'' \to \sY'$ in $\sC$ with $\sY' \in \sC_0$, we have $\sY'' \in \sC_0$.
    Indeed, $f_{\sY'}$ is descendable, hence so is its base change $f_{\sY'} \fibprod_{\sY'} \sY'' = f_{\sY''}$.
    \item For any finite étale surjection $g : \sY'' \to \sY'$ in $\sC$ with $\sY'' \in \sC_0$, we have $\sY' \in \sC_0$.
    Indeed, $g$ is finite étale, so it satisfies the projection formula and base change, and it is descendable by assumption; since $f_{\sY''}$ is the base change of $f_{\sY'}$ along $g$, \corref{cor:descendable local} shows that $f_{\sY'}$ is descendable.
    \item For $\sY' \in \sC$, any open immersion $\sU \hook \sY'$ with $\sU \in \sC_0$, and any étale neighbourhood $\sV \to \sY'$ of $\sY' \setminus \sU$ with $\sV \in \sC_0$, we have $\sY' \in \sC_0$.
    Indeed, the étale neighbourhood lifts $\sZ = (\sY' \setminus \sU)_{\red}$ to a closed substack of $\sV$, so $f_{\sV} \fibprod_{\sV} \sZ = f_\sZ$ is descendable by base change.
    By \lemref{lem:descendability and localization}, the descendability of $f_{\sU}$ and $f_{\sZ}$ then yields descendability of $f_{\sY'}$.\qedhere
  \end{itemize}
\end{proof}

\ssec{Forgetting supports, DM case}

\begin{thm}\label{thm:forget supp wild DM}
  Let $\D$ be a lisse-extended presentable weave on Artin stacks satisfying localization, and suppose every finite surjection which is either étale or radicial is descendable.
  If $f : \sX \to \sY$ is a proper Deligne--Mumford morphism of $1$-Artin stacks, then $\alpha_f : f_! \to f_*$ is invertible.
\end{thm}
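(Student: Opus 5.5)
We reduce in stages to the situation of the introduction's sketch, and then use descendability (\thmref{thm:descendability DM}) to commute $f_!$ past the totalization.

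\emph{Step 1: reduction to $\sY$ an affine scheme.} The transformation $\alpha_f$ is compatible with $*$-pullback along smooth morphisms $v : Y \to \sY$. The exchange maps $v^*f_! \simeq f'_!v'^*$ (base change) and $v^*f_* \to f'_*v'^*$ (smooth base change for $*$-direct image in the lisse extension) intertwine $\alpha_f$ and $\alpha_{f'}$. Since the family of such $v^*$ with $Y$ affine is jointly conservative on the lisse-extended weave, it suffices to treat $\sY = Y$ affine. Then $\sX$ is a quasi-compact DM stack, proper over $Y$, with proper (hence finite) diagonal.

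\emph{Step 2: choose a proper cover by a scheme.} We choose a finite (or proper) representable surjection $g : Z \twoheadrightarrow \sX$ with $Z$ a scheme, using Chow-type results for DM stacks (e.g. Laumon--Moret-Bailly, Rydh); it is finitely presented since $\sX$ is noetherian-approximable. By \thmref{thm:descendability DM}, $g$ is descendable, and so is each base change in its \v{C}ech nerve $g_n : Z_n \to \sX$. The $Z_n$ are algebraic spaces proper over $Y$. By \propref{prop:proper descendable implies descent} and \remref{rem:weave totalization commutes}, $\sF \simeq \Tot(g_{\bullet,*}g_\bullet^*\sF)$, and this totalization is preserved by \emph{every} exact functor out of $\D(\sX)$, in particular by $f_!$ and by $f_*$. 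Here $f_!$ is exact, even colimit-preserving, so we do need the descendable form. For $g_n$ representable and proper, $g_{n,!}\simeq g_{n,*}$ via $\alpha_{g_n}$, which is invertible from the weave on algebraic spaces, after checking it smooth-locally on $\sX$.

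\emph{Step 3: conclude by functoriality of $\alpha$.} Writing $\sF_n := g_n^*\sF$, we have compatible identifications
\[ f_!\sF \simeq \Tot\bigl(f_!g_{n,*}\sF_n\bigr) \simeq \Tot\bigl((fg_n)_!\sF_n\bigr), \qquad f_*\sF \simeq \Tot\bigl((fg_n)_*\sF_n\bigr). \]
By compatibility of $\alpha$ with composition ($\alpha_{fg_n} = \alpha_f g_{n,*}\circ f_!\alpha_{g_n}$), the map $\alpha_f$ is identified with $\Tot(\alpha_{f\circ g_n})$. Since $f\circ g_n : Z_n \to Y$ is a proper morphism of algebraic spaces, $\alpha_{f\circ g_n}$ is the usual invertible isomorphism $f_!\simeq f_*$ of the weave, and we are done.

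\emph{Expected main obstacle.} The delicate point is the coherence bookkeeping: we must check that $\alpha$ is compatible with composition and smooth base change in the lisse extension, as constructed in \cite[\S 2.2]{weavelisse}, so that the identification of $\alpha_f$ with $\Tot(\alpha_{fg_\bullet})$ is a genuine equivalence of cosimplicial diagrams. A secondary point is the existence of the finitely presented proper representable cover $g$ by a scheme. We would first try the finite cover furnished by Rydh's theorem for stacks with finite inertia, approximating to get finite presentation.
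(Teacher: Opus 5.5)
Your proposal is correct and follows the paper's proof essentially step for step. Both arguments reduce smooth-locally to $\sY$ an affine scheme, take a finitely presented finite surjection $g : Z \twoheadrightarrow \sX$ from a scheme, and use \thmref{thm:descendability DM} together with \remref{rem:weave totalization commutes} so that $f_!$ commutes with $\Tot(g_{\bullet,*}g_\bullet^*\sF)$; invertibility of $\alpha_{g_n}$ and $\alpha_{f\circ g_n}$ plus functoriality of $\alpha$ then finish. The one small difference is that the paper cites Rydh's approximation theorem, which already provides the finite presentation of $g$, so your separate approximation step is unnecessary.
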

\begin{proof}
  Let $(Y_i \to \sY)_\alpha$ be a smooth covering family where $Y_i$ are affine schemes, and consider the base changes $f_i : \sX \fibprod_\sY Y_i \to Y_i$.
  Since the $*$-pull-backs to $Y_i$ are jointly conservative and send $(\alpha_f$ to $\alpha_{f_i}$ by smooth base change, we may assume $\sY$ is an affine scheme.
  Then $\sX$ is a quasi-compact DM stack with quasi-compact and separated diagonal.

  Now by \cite[Thm.~B]{RydhApprox} there exists a finitely presented finite surjection $g : Z \twoheadrightarrow \sX$ with $Z$ a scheme.
  By \thmref{thm:descendability DM}, $g$ is descendable.
  Denote by $Z_\bullet$ the \v{C}ech nerve of $g$; its terms are algebraic spaces since $g$ is representable.
  By proper descent along $g$ (\propref{prop:proper descendable implies descent}), every $\sF \in \D(\sX)$ is the totalization of $g_{\bullet,*}g_\bullet^*(\sF)$.
  This totalization is preserved by the limit-preserving functor $f_*$ and, since $g$ is descendable, also by the exact functor $f_!$ (\remref{rem:weave totalization commutes}).
  Since each $g_\bullet : Z_\bullet \to \sX$ is proper representable and each $f \circ g_\bullet : Z_\bullet \to Y$ is a proper morphism of algebraic spaces, $\alpha_{g_\bullet}$ and $\alpha_{f\circ g_\bullet}$ are invertible by \cite[Lem.~2.21]{weaves} and \cite[Thm.~3.4.5(ii), Lem.~2.2.2]{weavelisse}.
  The claim follows by functoriality of $\alpha$.
\end{proof}

\begin{exam}
  For example, for $\SH_{\Q,+}$ all finite étale surjections and all finite radicial surjections are descendable of index $\le 1$.
  Indeed, for finite étale surjections the Euler characteristic \eqref{eq:fetcons} is invertible, and for finite radicial surjections $f$, the unit $\id \to f_*f^*$ is invertible (both statements are checked by descent from the case of schemes).
\end{exam}

\ssec{Forgetting supports, tame Artin case}

Suppose next that $\sY$ is not necessarily DM but only \emph{$1$-Artin}.
We are then able to prove a result analogous to \thmref{thm:forget supp wild DM} in the ``tame'' case.
For this we only require the following slightly weaker conditions on $\D$:
\begin{enumerate}
  \item[(D1)] Let $f : \sY \twoheadrightarrow \sX$ be a torsor under a finite étale group scheme $H$ over $\sX$ whose order $|H|$ is invertible in $\End_{\D(\sX)}(\un_\sX)$.
  Then $f$ is descendable.
  \item[(D2)] Let $f : \sY \twoheadrightarrow \sX$ be a finite\footnote{
    We recall that a morphism of stacks $f : \sX \to \sY$ is \emph{finite} if it is schematic, and the base change $f \fibprod_\sY Y$ is a finite morphism of schemes for every scheme $Y$ over $\sY$.
  } radicial surjection such that every prime number is invertible either in $\sO_\sX$ or in $\End_{\D(\sX)}(\un_\sX)$.
  Then $f$ is descendable.
\end{enumerate}

\begin{thm}\label{thm:forget supp tame Artin}
  Let $\D$ be a lisse-extended presentable weave on Artin stacks satisfying localization and (D1) and (D2).
  Let $f : \sX \to \sY$ be a proper $1$-Artin morphism with proper diagonal.
  If $f$ has tame relative inertia\footnote{
    That is: the relative inertia is finite, and for every geometric point $y$ of $\sY$, the fibre $\sX_y$ has linearly reductive stabilizers (and hence is a tame Artin stack in the sense of \cite{AOVtame}).
  }, stabilizer orders invertible in $\End_{\D(\sY)}(\un_\sY)$, and $\sY$ equicharacteristic, then $\alpha_f : f_! \to f_*$ is invertible.
\end{thm}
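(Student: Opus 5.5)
We follow the strategy of \thmref{thm:forget supp wild DM}: cover $\sX$ by a proper representable morphism from something on which $\alpha$ is already known to be invertible, show that this cover is descendable, and then commute $f_!$ past the \v{C}ech totalization. First we reduce to the case where $\sY$ is an affine scheme. We take a smooth cover of $\sY$ by affine schemes $Y_i$; the $*$-pullbacks are jointly conservative and carry $\alpha_f$ to $\alpha_{f_i}$ by smooth base change. All the hypotheses pass to the base change: properness, proper diagonal, tame relative inertia, equicharacteristic base, and invertibility of stabilizer orders (since $\un_{\sY}$ pulls back to $\un_{Y_i}$). So $\sY = Y$ is an affine equicharacteristic scheme. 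Then $\sX$ is a proper tame Artin stack over $Y$ with finite inertia.

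Next we use the local structure of tame stacks from \cite{AOVtame}. Étale locally on the coarse moduli space $\pi : \sX \to X$, we have $\sX \simeq [U/H]$, where $H$ is a finite linearly reductive group scheme over the base and $U \to X$ is finite. In equicharacteristic $p$, a linearly reductive finite group scheme is an extension of a tame étale group (of order prime to $p$) by a diagonalizable infinitesimal group $\mu$-type part. This suggests factoring the desired cover into pieces of types (D1) and (D2).
\begin{itemize}
\item The étale quotient gives a torsor under a finite étale group whose order divides the stabilizer orders, hence is invertible in $\End(\un)$ by hypothesis. This piece is descendable by (D1).
\item The infinitesimal part gives a finite radicial surjection. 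In characteristic $p$ the stabilizer orders are powers of $p$ times prime-to-$p$ numbers; every prime other than $p$ is invertible in $\sO$ only in characteristic zero, so we must use the hypothesis that $p$ is invertible in $\End(\un)$. This piece is descendable by (D2). In characteristic $0$ the infinitesimal part is trivial.
\end{itemize}
Globally, we aim for a finite finitely presented surjection $g : Z \twoheadrightarrow \sX$ with $Z$ an algebraic space (e.g.\ $Z = U$ locally). We prove $g$ descendable by the dévissage of \cite[Thm.~D]{RydhDevissage} over the étale site of the coarse space $X$, exactly as in \thmref{thm:descendability DM}. The required closure properties come from base change, \corref{cor:descendable local} for finite étale covers, and \lemref{lem:descendability and localization}. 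The local case $[U/H]$ is handled by the composite $U \to [U/H]$, which is an $H$-torsor. We factor it as $U \to [U/H^0] \to [U/H]$, where the first map is a torsor under the infinitesimal part $H^0$ and the second is a torsor under the étale group $H/H^0$. The étale-group torsor is descendable by (D1), and the infinitesimal torsor is descendable by (D2), being finite radicial surjective. Composition preserves descendability.

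Once $g$ is descendable, we finish as in \thmref{thm:forget supp wild DM}. By \propref{prop:proper descendable implies descent}, we have $\sF \simeq \Tot(g_{\bullet,*}g_\bullet^*\sF)$. This totalization is preserved by $f_*$, and also by $f_!$ by \remref{rem:weave totalization commutes}. Each $g_n$ is proper representable and each $f\circ g_n$ is a proper morphism of algebraic spaces, so the maps $\alpha$ for these are invertible by the cited lemmas. Functoriality of $\alpha$ then gives the claim.

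The main obstacle is constructing $g$ globally with the required factorization, and making the dévissage work over a non-DM stack. Rydh's dévissage is applied over $X$ (or over $\sX$ via étale neighbourhoods of the coarse space), and this requires the local quotient presentations to be compatible with étale base change on $X$. If that fails, we would instead prove descendability of $U \to \sX$ directly étale-locally on $X$, and glue using the localization lemma along a stratification of the quasi-compact scheme $X$, as in the proof of \thmref{thm:descendability}.
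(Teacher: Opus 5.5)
There is a genuine gap, and it is the step you flag yourself as the main obstacle. You never construct a global descendable cover $g : Z \twoheadrightarrow \sX$, and the tools you suggest cannot produce one from (D1) and (D2) alone. These hypotheses are much weaker than those of \thmref{thm:descendability} and \thmref{thm:descendability DM}. They only make descendable the torsors under finite étale groups of order invertible in $\End(\un)$, and the radicial surjections satisfying the prime condition.

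A global finite cover $Z \to \sX$ (e.g.\ from \cite[Thm.~B]{RydhApprox}) restricts over a chart $\sX \times_X X' \simeq [U/H]$ to some $Z' \to [U/H]$, not to $U \to [U/H]$. To transfer descendability from $U \to [U/H]$ via \corref{cor:descendable local}, you would need $Z' \times_{[U/H]} U \to U$ to be descendable. That is descendability of an arbitrary finite surjection of algebraic spaces, which you do not have here.

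The other routes fail too. If you run the dévissage over $\sX$ as in \thmref{thm:descendability DM}, it needs descendability of \emph{arbitrary} finite étale surjections. It also needs a base case given by an étale atlas from an affine scheme, which a non-DM tame stack (e.g.\ $B\mu_p$ in characteristic $p$) does not have. Your fallback does not help either: \lemref{lem:descendability and localization} glues descendability of a morphism that is already globally defined, whereas $U \to \sX$ exists only over étale, not Zariski, neighbourhoods in $X$.

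The missing idea is to change the morphism instead of building a global cover. By Keel--Mori, $f$ factors as the coarse moduli map $\pi : \sX \to X$ followed by a proper morphism $g : X \to Y$ of algebraic spaces. Since $\alpha_g$ is invertible by \cite[Lem.~2.21]{weaves}, functoriality of $\alpha$ reduces you to showing that $\alpha_\pi$ is invertible. Unlike $\alpha_f$, this question \emph{is} étale-local on $X$, by smooth base change. So by \cite[Thm.~3.2]{AOVtame} you may assume $\sX = [U/G]$ globally. Here $|G|$ is invertible in $\End(\un)$, because it is locally constant and equals a stabilizer order on each connected component.

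Now $q : U \twoheadrightarrow [U/G]$ is itself a global finite cover. It is descendable via the factorization $U \to [U/G^0] \to [U/G]$ of \cite[Thm.~19.9]{AHREtaleLocal}, using (D2) for the first map and (D1) for the second. Your final paragraph then runs unchanged with $(\pi, q)$ in place of $(f, g)$.

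Separately, your characteristic-$p$ sentence is garbled: in characteristic $p$ the primes $\ell \neq p$ \emph{are} invertible in $\sO$. What matters is that if $G^0$ is nontrivial, then $p$ divides $|G^0|$ and hence $|G|$. So $p$ is invertible in $\End(\un)$ and (D2) applies.
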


\begin{proof}
  As in the proof of \thmref{thm:forget supp wild DM} we may assume $\sY=Y$ is an equicharacteristic algebraic space and $\sX$ is $1$-Artin with finite inertia.
  Then by Keel--Mori there exists a coarse moduli space $\pi : \sX \to M$, and $f : \sX \to Y$ factors as the proper surjection $\pi$ followed by a proper morphism of algebraic spaces $g : M \to Y$.
  Since $\alpha_g$ is an isomorphism by \cite[Lem.~2.21]{weaves} and $\alpha$ is functorial in $f$, we may replace $f$ by $\pi$ and assume that $f : \sX \to M$ is the (equicharacteristic) coarse moduli space of a tame $1$-Artin stack $\sX$.

  Since $\sX$ is tame, there exists by \cite[Thm.~3.2]{AOVtame} an étale surjection of algebraic spaces $p : M' \twoheadrightarrow M$ such that $\sX \fibprod_M M' \simeq [U/G]$ where $U$ is a scheme of finite presentation over $M'$ and $G$ is a linearly reductive finite fppf group scheme over $M'$ acting on $U$.
  Moreover, the proof of \emph{loc. cit.} shows that over each connected component of $M'$ there is a geometric point $x$ of $\sX$ such that the fibre $G_{f(x)}$ is the stabilizer $\underline{\Aut}_\sX(x)$.
  Since $G \to M'$ is finite locally free, its order $|G|$ is locally constant and hence is equal, on the component containing $x$, to the order of this stabilizer; in particular it is invertible in $\End_{\D(M')}(\un_{M'})$ by hypothesis.

  By smooth base change again we may replace $M$ by $M'$ and thereby assume that $\sX = [U/G]$, with $G$ finite linearly reductive of order invertible in $\End_{\D(M)}(\un_M)$.
  By \cite[Thm.~19.9\,(2,6)]{AHREtaleLocal} (which apply since $M$ is still equicharacteristic), there is a short exact sequence $1 \to G^0 \to G \twoheadrightarrow H \to 1$ of group schemes over $M$ with $H$ tame finite étale and $G^0$ finite radicial.
  Thus the quotient map $q : U \twoheadrightarrow [U/G]$ factors through the finite radicial surjection $U \twoheadrightarrow [U/G^0]$ followed by the finite étale surjection $[U/G^0] \to [U/G]$ (which is an $H$-torsor).
  Since $|H|$ and $|G^0|$ divide $|G|$, they are also invertible in $\End_{\D(M)}(\un_M)$.
  Thus assumptions (D1) and (D2) imply that $q$ is descendable.
  For $U \twoheadrightarrow [U/G^0]$, note that (D2) applies because every prime is invertible in $\sO_M$ or in $\End_{\D(M)}(\un_M)$.
  Indeed, since $M$ is equicharacteristic, every prime other than its residue characteristic is invertible in $\sO_M$; in characteristic $p > 0$ the finite radicial group $G^0$ has $p$-power order, so either it is trivial, or $p$ divides $|G^0|$ and is invertible in $\End_{\D(M)}(\un_M)$.

	Denote by $U_\bullet$ the \v{C}ech nerve of $q : U \twoheadrightarrow [U/G] = \sX$.
	By proper descent along $q$ (\propref{prop:proper descendable implies descent}), every $\sF \in \D(\sX)$ is the totalization of the cosimplicial diagram $q_{\bullet,*} q_{\bullet}^*(\sF)$.
	We now conclude just as in the proof of \thmref{thm:forget supp wild DM}, using the descendability of $q$ proven above.
\end{proof}

Institute of Mathematics, Academia Sinica, 10617 Taipei, Taiwan

\end{document}